\numberwithin{equation}{section}
\newtheorem{theorem}[equation]{Theorem}
\newtheorem{proposition}[equation]{Proposition}
\newtheorem{lemma}[equation]{Lemma}
\newtheorem{corollary}[equation]{Corollary}
\newtheorem{conjecture}[equation]{Conjecture}
\theoremstyle{definition}
\newtheorem{rmk}[equation]{Remark}
\newenvironment{remark}[1][]{\begin{rmk}[#1] \pushQED{\qed}}{\popQED \end{rmk}}
\newtheorem{eg}[equation]{Example}
\newenvironment{example}[1][]{\begin{eg}[#1] \pushQED{\qed}}{\popQED \end{eg}}
\newtheorem{constr}[equation]{Construction}
\newenvironment{construction}[1][]{\begin{constr}[#1] \pushQED{\qed}}{\popQED \end{constr}}
\newtheorem{defnaux}[equation]{Definition}
\newenvironment{definition}[1][]{\begin{defnaux}[#1]\pushQED{\qed}}{\popQED \end{defnaux}}
\newcommand{\cC}{\mathcal{C}}
\newcommand{\cD}{\mathcal{D}}
\newcommand{\cE}{\mathcal{E}}
\newcommand{\rM}{\mathrm{M}}
\newcommand{\bN}{\mathbf{N}}
\newcommand{\fN}{\mathfrak{N}}
\newcommand{\bP}{\mathbf{P}}
\newcommand{\bS}{\mathbf{S}}
\newcommand{\rf}{\mathrm{f}}
\let\ol\overline
\let\ul\underline
\DeclareMathOperator{\End}{End}
\DeclareMathOperator{\Aut}{Aut}
\DeclareMathOperator{\Hom}{Hom}
\renewcommand{\phi}{\varphi}
\newcommand{\GL}{\mathbf{GL}}
\newcommand{\Th}{\mathfrak{Th}}
\DeclareMathOperator{\Gr}{\mathbf{Gr}}
\newcommand{\ulambda}{\ul{\smash{\lambda}}}
\let\defn\emph
\newcommand{\DOI}[1]{\href{http://doi.org/#1}{\color{purple}{\tiny\tt DOI:#1}}}
\newcommand{\arxiv}[1]{\href{http://arxiv.org/abs/#1}{{\tiny\tt arXiv:#1}}}
\title{Biquadratic spaces of length two}
\date{December 29, 2024}
\author{Alessandro Danelon}
\address{Department of Mathematics, University of Michigan, Ann Arbor, MI, USA}
\email{\href{mailto:adanelon@umich.edu}{adanelon@umich.edu}}
\urladdr{\url{https://public.websites.umich.edu/~adanelon/}}
\author{Andrew Snowden}
\thanks{AS was supported by NSF grant DMS-2301871.}
\address{Department of Mathematics, University of Michigan, Ann Arbor, MI, USA}
\email{\href{mailto:asnowden@umich.edu}{asnowden@umich.edu}}
\urladdr{\url{http://www-personal.umich.edu/~asnowden/}}
\begin{document}

\begin{abstract}
A \defn{tensor space} is a vector space equipped with a finite collection of multilinear forms. The \defn{length} of a tensor space is its length as a representation of its symmetry group. Infinite dimension tensor spaces of finite length are special, highly symmetrical objects. We classify the (universal) biquadratic spaces of length two; there are seven families of them. As a corollary, we establish some cases of the linear analog of the Ryll-Nardzewski theorem. We view this work as a first attempt to classify highly symmetrical tensor spaces.
\end{abstract}	

\maketitle
\tableofcontents

\section{Introduction}

\subsection{Background}

Fix an algebraically closed field $k$ of characteristic~0 and a tuple $\ulambda=[\lambda_1, \ldots, \lambda_r]$ of partitions. A \defn{$\ulambda$-space} is a vector space $V$ equipped with linear functionals $\omega_i \colon \bS_{\lambda_i}(V) \to k$ for each $i$, where $\bS_{\lambda}$ is the Schur functor. Our $\ulambda$-spaces will often be infinite dimensional, but, throughout this paper, we only allow $\ulambda$-spaces of countable dimension. When $\lambda_i=(2)$ for each $i$, we can regard the $\omega_i$'s as symmetric bilinear forms on $V$. We refer to this as the \defn{$r$-quadratic} or \defn{multi-quadratic} case. This paper is chiefly concerned with the \defn{bi-quadratic} case ($r=2$).

Recent work has demonstrated $\ulambda$-spaces to be a rich topic of study. We recall two results. A $\ulambda$-space is called \defn{universal} if every finite dimensional $\ulambda$-space embeds into it. Kazhdan and Ziegler \cite{KaZ} established an important characterization of universal $\ulambda$-spaces (later generalized by Bik, Danelon, Draisma, and Eggermont \cite{BDDE}). In the multi-quadratic case, their theorem says that $V$ is universal if and only if every non-trivial linear combination of the $\omega_i$'s has infinite rank.

A $\ulambda$-space $V$ is called \defn{homogeneous} if whenever $\sigma, \tau \colon W \to V$ are two embeddings of a finite dimensional $\ulambda$-space $W$, there exists an automorphism $g$ of $V$ such that $\sigma = g \circ \tau$. For example, Witt's famous theorem states that a finite dimensional non-degenerate quadratic space is homogeneous. Harman and Snowden \cite{homoten} showed that there exists a universal homogeneous $\ulambda$-space of countable dimension, which is unique up to isomorphism (provided each $\lambda_i$ is non-empty). In particular, there is a distinguished isomorphism class of $\ulambda$-space in countable dimensions, which is rather remarkable as nothing like this is true in finite dimensions. In follow-up work \cite{homoten2}, we constructed some additional ``weakly homogeneous'' spaces.

Classifying $\ulambda$-spaces is an interesting and (in our opinion) important problem. Classifying all $\ulambda$-spaces up to isomorphism seems prohibitively difficult. There are two weaker problems that appear to be more tractable. The first is to classify all $\ulambda$-spaces up to isogeny, which is a weaker equivalence than isomorphism. In \cite{isocubic} we completely solved this problem in the cubic case, and in forthcoming work \cite{isogeny} we treat the general case.

The second problem is to classify certain special $\ulambda$-spaces up to isomorphism. For example, one might consider spaces with large automorphism group (made precise in some way). We note that the (weakly) homogeneous $\ulambda$-spaces constructed in \cite{homoten, homoten2} do indeed have large automorphism groups. This seems to be a much more difficult problem than the previous one, but we are optimistic that, in some form, it is tractable. The purpose of this paper is to solve this problem in the first non-trivial case.

\afterpage{
\clearpage\normalfont

\begin{table} \centering
\caption{The seven families of length two universal biquadratic spaces. In the third column, $d$ denotes an element of $\{1,2,\ldots,\infty\}$, and $p$ and $q$ denote points of $\bP^1$. The final column is the ring of endomorphisms commuting with the automorphism group $G$. See Table~\ref{tab:len2} for how this determines the structure of $V$ as a representation of $G$.} \label{tab:main}
\renewcommand{\arraystretch}{1.2}
\begin{tabular}{@{}llll@{}}
\toprule
Name & Section & Parameters & $\End_G(V)$ \\
\midrule
Ia & \S \ref{ss:Ia} & $p \ne q$ & $k \oplus k$ \\
Ib & \S \ref{ss:Ib} & $d$ and $p$ & $k \oplus k$ \\
Ic & \S \ref{ss:Ic} & $d$ & $k$ \\
IIa & \S \ref{ss:IIa} & $p$ & $k[x]/(x^2)$ \\
IIb & \S \ref{ss:IIb} & $p$ & $k$ \\
IIc & \S \ref{ss:IIc} & $p$ & $k$ \\
III & \S \ref{s:III} & $E \subset k^4$ & $\rM_2(k)$ \\
\bottomrule
\end{tabular}
\end{table}

\begin{table} \centering
\vspace*{4ex}
\caption{Explicit definitions of Type I and II spaces. Here $B$ is an infinite symmetric matrix such that no non-trivial linear combination of its columns has finite support. For Type Ib and Ic, the top left block has size $d$; all other blocks are infinite.} \label{tab:matrix}
\renewcommand{\arraystretch}{1}
\begin{tabular}{@{}lcc@{\hskip 1in}lcc@{}}
\toprule
& $\omega$ & $\omega'$ &
& $\omega$ & $\omega'$ \\
\midrule\addlinespace[6pt]
Ia &
$\begin{pmatrix} 1 & 0 \\ 0 & 0 \end{pmatrix}$ &
$\begin{pmatrix} 0 & 0 \\ 0 & 1 \end{pmatrix}$ &
IIa &
$\begin{pmatrix} 0 & 1 \\ 1 & 0 \end{pmatrix}$ &
$\begin{pmatrix} 0 & 0 \\ 0 & 1 \end{pmatrix}$ \\[12pt]
Ib &
$\begin{pmatrix} 1 & 0 \\ 0 & B \end{pmatrix}$ &
$\begin{pmatrix} 0 & 0 \\ 0 & 1 \end{pmatrix}$ &
IIb &
$\begin{pmatrix} 1 & B \\ B & 0 \end{pmatrix}$ &
$\begin{pmatrix} 0 & 0 \\ 0 & 1 \end{pmatrix}$ \\[12pt]
Ic &
$\begin{pmatrix} 0 & 0 \\ 0 & B \end{pmatrix}$ &
$\begin{pmatrix} 0 & 0 \\ 0 & 1 \end{pmatrix}$ &
IIc &
$\begin{pmatrix} 0 & B \\ B & 0 \end{pmatrix}$ &
$\begin{pmatrix} 0 & 0 \\ 0 & 1 \end{pmatrix}$ \\[12pt]
\bottomrule
\end{tabular}
\end{table}

\begin{table} \centering
\vspace*{4ex}
\caption{Let $V$ be a length two representation of a group $G$ with simple constituents $L_1$ and $L_2$. The following table shows how the structure of $V$ is determined by its endomorphism algbera.} \label{tab:len2}
\renewcommand{\arraystretch}{1.2}
\begin{tabular}{@{}lll@{}}
\toprule
$\End_G(V)$ & Semi-simple? & $L_1 \cong L_2$? \\
\midrule
$\rM_2(k)$ & Yes & Yes \\
$k \oplus k$ & Yes & No \\
$k[x]/(x^2)$ & No & Yes \\
$k$ & No & No \\
\bottomrule
\end{tabular}
\end{table}

\clearpage
}

\subsection{Results}

We now state the main results of this paper. For these, we must assume that our coefficient field $k$ is uncountable. For instance, $k$ could be the field of complex numbers.

Let $V$ be a $\ulambda$-space, and let $G=\Aut(V)$. We can view $V$ as a representation of $G$, and we define the \defn{length} of $V$ to be its length as a representation. ``Finite length'' is one flavor of ``large automorphism group.'' The irreducible (length one) multi-quadratic spaces are fairly easy to classify; see  \S \ref{s:multiq}. The following is our main theorem:

\begin{theorem} \label{mainthm}
The universal biquadratic spaces of length two are those found in the seven families Ia, Ib, Ic, IIa, IIb, IIc, and~III appearing in Table~\ref{tab:main}.
\end{theorem}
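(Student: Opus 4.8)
The plan is to pin down an arbitrary length-two universal biquadratic space $(V,\omega,\omega')$ up to isomorphism by a structural case analysis, and then, in the reverse direction, to verify that each of the seven families in Table~\ref{tab:main} is indeed universal of length two. The starting point is the classification of the irreducible (length-one) multi-quadratic spaces from \S\ref{s:multiq}: these supply the only possible composition factors of $V$, so the task reduces to understanding how two such factors glue together. Write $G=\Aut(V)$ and fix a composition series $0\subsetneq L\subsetneq V$, so $L$ and $V/L$ are irreducible. The objects to track are the pencil $\{\omega_p:=a\omega+b\omega'\mid p=[a:b]\in\bP^1\}$, the radicals $\operatorname{rad}(\omega_p)$, and, when $\omega_p$ is nondegenerate, the $\omega_p$-self-adjoint operators $T_{p,q}:=\omega_p^{-1}\omega_q$. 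Each radical $\operatorname{rad}(\omega_p)$ and each (generalized) eigenspace of each $T_{p,q}$ is a $G$-submodule, hence lies in $\{0,L,V\}$ --- except when $V\cong L_1\oplus L_2$ is semisimple, where a few more submodules are available; and by Kazhdan--Ziegler \cite{KaZ}, universality means exactly that every $\omega_p$ has infinite rank.

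The motor of the reduction is the hypothesis that $k$ is uncountable: a $k$-linear endomorphism of a countable-dimensional space over an uncountable algebraically closed field has an eigenvalue, since otherwise $k(x)$ would act on it and force the dimension to be at least $\lvert k\rvert>\aleph_0$. Applying this to the operators $T_{p,q}$ (on $V$, or on a subquotient when no member of the pencil on $V$ itself is nondegenerate, in which case one argues directly with the radicals), together with the fact that the eigenspaces have essentially nowhere to go, forces some operator in the pencil either to be semisimple with exactly two eigenvalues --- so that $V=L_1\oplus L_2$ splits orthogonally --- or to be a scalar plus a square-zero $N$ with $\operatorname{im}N=\ker N=L$, giving the uniserial shapes. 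I would organize the analysis by $\End_G(V)$ as in Table~\ref{tab:len2}: the semisimple cases $k\oplus k$ and $\rM_2(k)$ and the non-semisimple cases $k$ and $k[x]/(x^2)$. When the pencil contains a nondegenerate form, diagonalizing or unipotently triangularizing $T$ and recording the two factors, the scalar parameters $p,q\in\bP^1$, and the finite block-size $d$, reproduces exactly the matrix models of Type~I and Type~II in Table~\ref{tab:matrix}; here the symmetric matrix $B$ parametrizes the generic nondegenerate part, and the requirement that no nontrivial combination of its columns has finite support is precisely what Kazhdan--Ziegler forces. The leftover case, $V\cong L^{\oplus 2}$ with $\End_G(V)=\rM_2(k)$, is Type~III.

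I expect the main difficulties to be two. First, the gluing problem in the non-semisimple cases: having identified the composition factors, one must show the extension carries exactly the data listed (and no more), which is where the no-finite-support condition on $B$ --- equivalently Kazhdan--Ziegler --- must be used to exclude spurious sub-extensions and to propagate universality from the factors up to $V$. Second, Type~III is genuinely different from the Type~I/II bookkeeping: one writes $V\cong L\otimes k^2$ using the $\rM_2(k)$-action, transports $(\omega,\omega')$ to a pair of $G$-invariant symmetric forms on the two-dimensional multiplicity space, and classifies the resulting finite-dimensional pencil data up to the relevant coordinate changes --- the answer being the parameter $E\subseteq k^4$. Finally, the converse direction --- that each listed family really has length two, i.e.\ that $\Aut(V)$ acts with only two composition factors --- is routine in Types~I and~II (apply Witt's theorem on the nondegenerate blocks, and note that the general linear group acts irreducibly on any common-radical block), but needs its own computation of $\Aut(V)$ in Type~III.
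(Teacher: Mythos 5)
Your overall architecture (reduce to how two irreducible constituents glue, use the Kazhdan--Ziegler criterion for universality, organize by $\End_G(V)$, and check the converse separately) is sensible, and the endpoint matches Tables~\ref{tab:main} and~\ref{tab:len2}. But the engine of your reduction --- the operators $T_{p,q}=\omega_p^{-1}\omega_q$ --- does not exist in this setting, and this is not a local technicality. In countably infinite dimension a form $\omega_p$ with zero nullspace gives an \emph{injection} $V\to V^*$ that is essentially never surjective, so $\omega_p^{-1}\omega_q$ is at best partially defined; your hedge (``argue with radicals when no member of the pencil is nondegenerate'') does not cover the cases where some $\omega_p$ has zero radical and the operator still fails to be globally defined. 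Concretely, in the Type~IIb model of Table~\ref{tab:matrix} the form $\omega$ has zero nullspace, yet the only pairs $(x,y)$ with $\omega_x=\omega'_y$ are $(0,y)$ with $y\in\ker(\omega')$ (Lemma~\ref{lem:IIb-1}), so there is nothing to diagonalize or triangularize, and your case analysis would never produce Types~Ib, IIb, IIc. The same issue affects your stated ``motor'': an arbitrary endomorphism of a countable-dimensional space over an uncountable field need not have an eigenvalue (the shift operator has none); eigenvalues, or at least minimal polynomials, are guaranteed only for $G$-equivariant endomorphisms of finite-length modules, via Proposition~\ref{prop:schur} and Corollary~\ref{cor:schur1}.

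The missing idea is to replace the operator by the $G$-subrepresentation $\Delta\subset V\oplus V$ of pairs $(x,y)$ with $\omega_x=\omega'_y$ (the graph of the partially defined $T$), which always exists, and to run the case analysis on the length of $\Delta$ together with the set of $p\in\bP^1$ for which $\ker(\omega(p))\neq 0$. This is what the paper does in \S\ref{s:mainthm}: after disposing of $\ker(V)\neq 0$ (Type~Ic), $\Delta$ has length one or two; length two yields Types~Ia and~IIa, plus one impossible subcase ruled out exactly by the minimal-polynomial argument you had in mind, applied to the genuine operator $T$ that exists there because both projections $\Delta\to V$ are isomorphisms; $\Delta$ irreducible yields Types~Ib, IIb, IIc, III. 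Your treatment of Type~III via $V\cong L\otimes k^2$ and the parameter $E\subset k^4$ does match the paper's equivalence with $4$-quadratic spaces, but note that the converse direction is not ``routine'' for Types~IIb and~IIc: showing that the matrix models of Table~\ref{tab:matrix} exhaust those types requires the bespoke Fra\"iss\'e-style f-injectivity arguments of \S\ref{ss:IIb}--\ref{ss:IIc}, not just Witt's theorem on blocks.
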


Matrices for the Type~I and Type~II spaces are given in Table~\ref{tab:matrix}. The Type~I spaces are exactly those that can be obtained by taking an orthogonal direct sum of two irreducible biquadratic spaces. It would not be difficult to extend the theorem to accommodate non-universal spaces, but this would just make the paper longer without introducing new ideas.

In \cite{homoten}, we introduced the notion of \defn{linearly oligomorphic} group. Roughly speaking, this is a group $G$ acting linearly on a vector space $V$ such that $\Gr_d(V)/G$ is finite dimensional for all $d$, where $\Gr_d$ denotes the Grassmannian; see \S \ref{ss:linolig} for details. We also defined a notion of \defn{linearly $\omega$-categorical} for $\ulambda$-spaces. Essentially, $V$ is linearly $\omega$-categorical if its isomorphism type admits a first-order axiomatization; see \S \ref{ss:logic} for details. As a consequence of Theorem~\ref{mainthm}, we obtain the following:

\begin{theorem} \label{mainthm2}
Let $V$ be a universal biquadratic space of length two. Then:
\begin{enumerate}
\item The group $\Aut(V)$ is linearly oligomorphic.
\item The space $V$ is linearly $\omega$-categorical.
\end{enumerate}
\end{theorem}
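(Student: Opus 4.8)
The plan is to invoke the classification of Theorem~\ref{mainthm} and verify the two assertions for each of the seven families, bootstrapping from the irreducible (length one) multi-quadratic spaces treated in \S\ref{s:multiq}. The first step is to record that every irreducible universal multi-quadratic space $L$ is linearly oligomorphic and linearly $\omega$-categorical: linear oligomorphy follows from a Witt-type statement (over the algebraically closed field $k$, a $d$-dimensional subspace $W \subseteq L$ is determined up to $\Aut(L)$ by finitely many discrete invariants together with the pencil of forms it carries, and the variety of such restricted pencils is finite dimensional), and linear $\omega$-categoricity follows because the Kazhdan--Ziegler criterion for universality is a first-order axiom scheme that, in the irreducible case, already pins down $L$ up to isomorphism.

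For assertion (a), write $V$ as an extension $0 \to U \to V \to Q \to 0$ of irreducibles (the socle filtration of $V$ as a representation of $G = \Aut(V)$). For Type~I the extension splits orthogonally, $V = U \perp Q$, so $G$ contains $\Aut(U) \times \Aut(Q)$, and linear oligomorphy follows from the irreducible case together with the elementary fact that an orthogonal direct sum of linearly oligomorphic spaces is linearly oligomorphic. For Types~II and~III one proceeds by d\'evissage: a $d$-dimensional subspace $W \subseteq V$ has the complete $\GL(V)$-invariant consisting of $W \cap U$, its image $\ol W$ in $Q$, a gluing class in a finite-dimensional $\Hom$-space, and the restrictions of $\omega, \omega'$; the task is to show that $G$, in the explicit form supplied by the classification, acts with finite-dimensional quotient on this datum, so that $\Gr_d(V)/G$ fibers over a product of (finite-dimensional) pieces for $U$ and $Q$ with finite-dimensional fibers. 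This is where the concrete models of Table~\ref{tab:matrix} and the description of $\Aut(V)$ are used; universality, via the Kazhdan--Ziegler and Bik--Danelon--Draisma--Eggermont theory, is what makes $\Aut(V)$ large enough for the fibration to work.

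For assertion (b), I would exhibit for each family an explicit first-order axiomatization: universality of $V$ is the axiom scheme above, and finitely many further sentences (or schemes, when a parameter such as $d \in \{1,2,\ldots,\infty\}$ is infinite) record the degeneracy data distinguishing the family --- the dimensions of the radicals of $\omega$, $\omega'$ and of the members of the pencil, their relative positions, and the parameters $p, q \in \bP^1$ and $E \subseteq k^4$. It then remains to prove completeness: any two countable models of these axioms are isomorphic. The natural route is a back-and-forth argument, extending finite partial isometries by universality and correcting them using the near-homogeneity encoded in the classification's normal forms. Since assertions (a) and (b) hold simultaneously for these $V$, this is exactly what yields the advertised cases of the linear analog of the Ryll--Nardzewski theorem.

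The step I expect to be the main obstacle is the completeness/homogeneity input underlying both parts: a Witt-type extension theorem for each family, asserting that the discrete-plus-finite-dimensional invariants above are complete for the action of the \emph{actual} group $\Aut(V)$, not merely of $\GL(V)$ or of a naive product of orthogonal groups. For the semisimple families this is close to the classical Witt theorem applied summand by summand, but for the non-semisimple Types~IIa, IIb, IIc and for~III the extension does not split, so the orbit count and the back-and-forth must track the gluing data and the two forms at once; establishing that the explicit automorphism group is transitive enough to do this is the crux, after which both (a) and (b) follow uniformly.
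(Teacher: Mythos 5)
Your overall plan---derive the theorem from the classification of Theorem~\ref{mainthm} and check both assertions family by family, reducing to the irreducible case wherever possible---is exactly the paper's strategy: there is no separate proof of Theorem~\ref{mainthm2}; it is assembled from the per-family propositions in \S\ref{s:type1}--\S\ref{s:III}. For the Type~I families and for the irreducible constituents your reductions are essentially what the paper does (irreducible multi-quadratic spaces are universal homogeneous by Theorem~\ref{thm:mqclass}, hence linearly oligomorphic by Theorem~\ref{thm:oligo} and $\omega$-categorical via $\Theta(V)$; orthogonal sums are handled by Proposition~\ref{prop:sum-oligo-2}).

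The genuine gap is precisely at the point you flag as ``the crux,'' and your proposal does not contain the ideas needed to close it. For Types~IIb and~IIc your suggested d\'evissage---classifying a $d$-dimensional $W\subseteq V$ by $W\cap U$, its image in $Q$, a gluing class, and the restricted forms, and then arguing that $G$ acts with finite-dimensional quotient---is not carried out and, as stated, would not work: the orbit of $W$ under $\Aut(V)$ is \emph{not} determined by the restricted biquadratic structure together with such linear-algebra data relative to the socle. The paper's resolution is to enrich the finite objects: it introduces a category $\cC$ whose objects are finite biquadratic spaces $W$ equipped with a \emph{marked} subspace $\fN(W)\subseteq\ker(\omega')$, proves that $V$ (with $\fN(V)=\ker(\omega')$) is f-injective as an ind-object of $\cC$ (Proposition~\ref{prop:IIb-4}, which requires the strength estimates of Lemmas~\ref{lem:IIb-1}--\ref{lem:IIb-3} and Corollary~\ref{cor:univ}), and only then deduces homogeneity, uniqueness, irreducibility of the two constituents, $\End_G(V)=k$, and linear oligomorphy via the explicit bounding objects $E_{2n,2n,2n}$. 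Nothing in your sketch supplies this extension theorem or the correct invariant $W\cap\ker(\omega')$. Similarly, for Type~III the paper does not run a fibration argument at all: it constructs an equivalence of categories between Type~III data and $4$-quadratic spaces, reducing both (a) and (b) entirely to the already-solved irreducible case; this reduction is the missing idea there. So while your outline correctly locates the difficulty, the proof of the theorem for the non-split families is not present in the proposal.
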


The Ryll-Nardzewski theorem is a classical theorem in model theory, which states that a countable relational structure is $\omega$-categorical if and only if its automorphism group is oligomorphic. There is a natural linear analog of this statement (Conjecture~\ref{conj:RN}), which is (in our opinion) one of the important open questions about $\ulambda$-spaces. The above theorem shows that the linear version of the Ryll-Nardzewski theorem does indeed hold for length two biquadratic spaces.

\subsection{Questions}

We mention a few questions arising from this work:
\begin{itemize}
\item In our proofs, it is important the coefficient field is uncountable due to our use of Schur's lemma (see \S \ref{ss:schur}). Can this assumption be removed?
\item Do the conclusions of Theorem~\ref{mainthm2} apply to all finite length multi-quadratic spaces? This seems plausible to us.
\item Can one classify irreducible $\ulambda$-spaces for general $\ulambda$?
\end{itemize}
We offer one observation that suggests that the multi-quadratic case might be quite special. Let $V$ be the cubic space with form $\sum_{i \ge 1} x_i^3$. The automorphism group of $V$ is the wreath product of the infinite symmetric group and the group of third roots of unity. This group is not linearly oligomorphic, but does act irreducibly on $V$. Thus, for general $\ulambda$, the class of finite length $\ulambda$-spaces will be strictly larger than the class of linearly oligomorphic $\ulambda$-spaces.

\subsection{Outline}

In \S \ref{s:prelim}, we review background on tensor spaces. In \S \ref{s:multiq}, we classify the irreducible multi-quadratic spaces. In \S \ref{s:type1}--\ref{s:III} we analyze the seven families of bi-quadratic spaces appearing in our main theorem. Finally, the main theorem is proved in \S \ref{s:mainthm}.

\subsection{Notation}

All vector spaces have dimension at most $\aleph_0$, unless otherwise mentioned.

\begin{description}[align=right,labelwidth=2.25cm,leftmargin=!]
\item[ $k$ ] the coefficient field (see \S \ref{s:multiq})
\item[ $\bP^1$ ] the set $k \cup \{\infty\}$
\item[ $\ker(\omega)$ ] the nullspace of a quadratic form $\omega$
\item[ $\ker(V)$ ] $\ker(\omega) \cap \ker(\omega')$ if $V$ is biquadratic
\item[ $\rM_2(k)$ ] the algebra of $2 \times 2$ matrices over $k$
\end{description}

\subsection*{Acknowledgments}

We thank Arthur Bik and Jan Draisma for helpful conversations.

\section{Preliminaries on tensor spaces} \label{s:prelim}

\subsection{Fra\"iss\'e theory} \label{ss:fraisse}

Let $\cC$ be a category in which all morphisms are monomorphisms; we will refer to morphisms as ``embeddings.'' An \defn{ind-object}\footnote{Of course, one can consider ind-objects indexed by sets other than $\bN$, but we will not need them.} of $\cC$ is a diagram $X_1 \to X_2 \to \cdots$ in $\cC$. The ind-objects in $\cC$ naturally form a category, which contains $\cC$ as a full subcategory; see \cite[\S A.2]{homoten} for details.

We now introduce three important classes of objects. We say that an ind-object $X$ is \defn{universal} if every object of $\cC$ embeds into $X$. We say that $X$ is \defn{homogeneous} if whenever $Y$ is an object of $\cC$ and $\alpha, \beta \colon Y \to X$ are two embeddings, there exists an automorphism $\sigma$ of $X$ such that $\beta=\sigma \circ \alpha$. We say that $X$ is \defn{f-injective} if, given embeddings $\alpha \colon Z \to Y$ and $\gamma \colon Z \to X$, with $Y$ and $Z$ objects of $\cC$, there exists an embedding $\beta \colon Y \to X$ such that $\gamma=\beta \circ \alpha$. We note that if $\cC$ has an initial object then any f-injective object is universal (take $Z$ to be the initial object). The following basic result relates these notions:

\begin{proposition} \label{prop:fraisse}
A universal ind-object is f-injective if and only if it is homogeneous. Moreover, any two universal homogeneous ind-objects are isomorphic.
\end{proposition}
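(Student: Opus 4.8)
The plan is to prove this via a back-and-forth argument, which is the standard tool for such Fraïssé-type statements. I will treat the two assertions in turn.

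First, suppose $X$ is universal and f-injective; I want to show it is homogeneous. So let $Y$ be an object of $\cC$ and let $\alpha,\beta \colon Y \to X$ be two embeddings; I must produce an automorphism $\sigma$ of $X$ with $\beta = \sigma \circ \alpha$. Since $X$ is an ind-object, write $X$ as a diagram $X_1 \to X_2 \to \cdots$. The key point is that any embedding $Y \to X$ factors through some $X_i$ (this is the definition of a morphism from an object of $\cC$ into an ind-object, as recalled from \cite[\S A.2]{homoten}); more generally any embedding from a fixed object of $\cC$ into $X$ lands in some finite stage. I will build $\sigma$ as a sequence of compatible partial isomorphisms between larger and larger stages $X_i$, going back and forth. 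Start by using f-injectivity with $Z = Y$, the embedding $\alpha \colon Y \to X$ (corestricted to land somewhere), and $\gamma = \beta$: wait — more carefully, one uses f-injectivity to extend a given partial map. The inductive step: having matched $X_{n}$ (the source) isomorphically onto a sub-object of $X$ extending the required relation, apply f-injectivity to the embedding of that matched image into $X_{n+1}$ together with its given embedding into $X$, to extend the partial isomorphism to $X_{n+1}$; then do the symmetric move to ensure the map is also onto, swapping the roles of the two copies of $X$. The union of these partial isomorphisms is the desired automorphism $\sigma$, and by construction $\sigma\alpha = \beta$. The only subtlety is bookkeeping so that at stage $n$ one has covered $X_n$ in both the domain and codomain; this is the usual "back-and-forth" alternation and is routine.

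For the converse, suppose $X$ is universal and homogeneous; I want f-injectivity. Given embeddings $\alpha \colon Z \to Y$ and $\gamma \colon Z \to X$ with $Y, Z \in \cC$, I must find $\beta \colon Y \to X$ with $\gamma = \beta \circ \alpha$. By universality, there is some embedding $\delta \colon Y \to X$. Then $\delta \circ \alpha$ and $\gamma$ are two embeddings $Z \to X$, so by homogeneity there is an automorphism $\sigma$ of $X$ with $\gamma = \sigma \circ (\delta \circ \alpha)$. Set $\beta = \sigma \circ \delta$; then $\beta$ is an embedding $Y \to X$ and $\beta \circ \alpha = \sigma \circ \delta \circ \alpha = \gamma$, as required. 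This direction is short and uses homogeneity in precisely the expected way.

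Finally, uniqueness of universal homogeneous ind-objects: let $X$ and $X'$ both be universal and homogeneous. By the equivalence just proved, both are f-injective. Now run the back-and-forth argument between $X$ and $X'$ directly: write $X = \varinjlim X_n$ and $X' = \varinjlim X'_n$; alternately use f-injectivity of $X'$ to extend a partial isomorphism over the next stage $X_n$ of $X$, and f-injectivity of $X$ to extend over the next stage $X'_n$ of $X'$, starting from the (essentially unique) map out of an initial object or, if $\cC$ has no initial object, from any single embedding $X_1 \to X'$ guaranteed by universality. The union is an isomorphism $X \xrightarrow{\sim} X'$. The main thing to be careful about — and the only place where anything could go wrong — is the factorization-through-finite-stages property of ind-objects and making the alternation exhaust both sides; granting that, the argument is formal. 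I expect the back-and-forth bookkeeping in the first part to be the principal (though still routine) obstacle.
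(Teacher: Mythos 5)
Your proposal is correct: the easy direction (universal homogeneous implies f-injective) and the back-and-forth constructions for the other direction and for uniqueness are exactly the standard categorical Fra\"iss\'e argument. The paper itself gives no proof here but simply cites \cite[Proposition~A.7]{homoten}, and the argument there is the same back-and-forth you describe, so your sketch matches the intended proof; the only caveat is that the ``routine bookkeeping'' you defer (tracking partial isomorphisms as pairs of embeddings from a common object of $\cC$ and alternating so both towers are exhausted) is genuinely all that remains.
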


\begin{proof}
See \cite[Proposition~A.7]{homoten}.
\end{proof}

Categorical Fra\"iss\'e theory provides an important tool (the Fra\"iss\'e limit) for constructing a universal homogeneous object, under suitable hypotheses; see \cite[\S A.6]{homoten} for details. We will not need this, but we will use the concepts of f-injective and homogeneous objects.

\begin{example} \label{ex:Q}
Let $\cC$ be the category of finite totally ordered sets, where embeddings are monotonic functions. An ind-object of $\cC$ is (equivalent to) a countable totally ordered set. It is not difficult to see that a countable dense totally ordered set is universal and f-injective. Thus Proposition~\ref{prop:fraisse} implies that any two such sets are isomorphic, which is a classical theorem of Cantor.
\end{example}

\begin{example}
Let $\cC$ be the category of finite dimensional vector spaces equipped with a symmetric bilinear form, over a fixed field of characteristic $\ne 2$; embeddings are injective linear maps that respect the form. An ind-object of $\cC$ is (equivalent to) a vector space of countable dimension equipped with a form. Witt's theorem states that if $V$ is a non-degenerate object of $\cC$ (i.e., the form has zero nullspace) then $V$ is homogeneous. This continues to hold for ind-objects as well, and so the universal homogeneous ind-object is any space with a form that has zero null-space.
\end{example}

\subsection{Tensor spaces}

Let $k$ be an infinite field and let $V$ be a $k$-vector space. Given a partition $\lambda$ such that $\vert \lambda \vert !$ is invertible in $k$, a \defn{$\lambda$-form} on $V$ is a linear map $\bS_{\lambda}(V) \to k$, where $\bS_{\lambda}$ denotes the Schur functor. Some examples:
\begin{itemize}
\item If $\lambda$ is the empty partition then $\bS_{\lambda}(V)=k$, and so a $\lambda$-form is simply a scalar. This case is obviously somewhat degenerate.
\item If $\lambda=(1)$ then a $\lambda$-form is a linear functional on $V$.
\item If $\lambda=(2)$ then a $\lambda$-form can be viewed as either a quadratic form or a symmetric bilinear form. We will freely pass between the two interpretation.
\item More generaly, if $\lambda=(d)$ is a one row partition then a $\lambda$-form can be viewed as either a degree $d$ homogeneous polynomial, or as a symmetric $d$-linear form on $V$.
\end{itemize}
In this paper, we will only need the cases $(1)$ and $(2)$, though we state some background results in greater generality.

Given a tuple of partitions $\ulambda=[\lambda_1, \ldots, \lambda_n]$, such that each $\vert \lambda_i \vert!$ is invertible, a \defn{$\ulambda$-space} is a vector space $V$ equipped with a $\lambda_i$-form for each $1 \le i \le n$. An \defn{embedding} of $\ulambda$-spaces is an injective linear map that respects all the forms. All $\ulambda$-spaces we consider throughout this paper have dimension $\le \aleph_0$. We let $\cC_{\ulambda}$ be the category of $\ulambda$-spaces of dimension $\le \aleph_0$, and we let $\cC^{\rf}_{\ulambda}$ be the subcategory of finite dimensional spaces. We note that $\cC_{\ulambda}$ is equivalent to the category of ind-objects in $\cC^{\rf}_{\ulambda}$. We can thus apply the terminology from \S \ref{ss:fraisse} to $\ulambda$-spaces.

Let $V$ be a $k$-vector space, and let $f \colon V \to k$ be a $(d)$-form, thought of as a polynomial. We say that $f$ has \defn{strength $\le s$} if there is an expression
\begin{displaymath}
f = \sum_{i=1}^s g_i h_i
\end{displaymath}
where $g_i$ and $h_i$ are homogeneous polynomial functions on $V$ with degree $<d$, i.e., $g_i$ is an $(e_i)$-form and $h_i$ is a $(d_i-e_i)$-form, with $0<e_i<d_i$. Given a collection $f_1, \ldots, f_r$ of $(d)$-forms, we define the \defn{collective strength} to be the minimal strength of a non-trivial linear combination. Given a collection $f_1, \ldots, f_r$ of forms of varying degree, we define the \defn{collective strength} to be the minimum collective strength in each degree separately.

\begin{example}
We give a few examples of strength. A non-zero linear form has infinite strength. More generally, a collection of linear forms has infinite collective strength if it is linearly dependent, and otherwise has collective strength zero. If $f$ is a $(2)$-form then the strength of $f$ is equal to $\lceil \tfrac{1}{2} \operatorname{rank}(f) \rceil$.
\end{example}

We now recall an important theorem of Kazhdan and Ziegler \cite{KaZ} on universal $\ulambda$-spaces.

\begin{theorem} \label{thm:univ}
Suppose $\ulambda=[(d_1), \ldots, (d_r)]$, where each $d_i$ is a positive integer. Assume $k$ is algebraically closed and $d_i!$ is invertible in $k$ for each $i$. Let $(V, f_1, \ldots, f_r)$ be a $\ulambda$-space. 
\begin{enumerate}
\item Given $n \in \bN$ there exists $s=s(k, \ulambda, n)$ with the following property: if $(f_1, \ldots, f_r)$ has collective strength at least $s$ then any $n$-dimensional $\ulambda$-space embeds into $V$.
\item If $(f_1, \ldots, f_r)$ has infinite collective strength then $V$ is universal.
\end{enumerate}
\end{theorem}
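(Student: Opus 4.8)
The final "statement" in this excerpt is Theorem~\ref{thm:univ}, the Kazhdan–Ziegler theorem on universal $\ulambda$-spaces. This is a substantial theorem whose full proof is the content of the cited paper \cite{KaZ}; I will sketch the proof strategy rather than reproduce it.

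\medskip

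The plan is to deduce part (b) from part (a) by a compactness/limit argument, and to prove part (a) by the ``strength implies universality at finite level'' mechanism, whose technical heart is the fact that a single form of large strength behaves ``generically'' when restricted to generic subspaces. First I would observe that (b) follows formally from (a): if $(f_1,\dots,f_r)$ has infinite collective strength, then for every $n$ the hypothesis of (a) with threshold $s(k,\ulambda,n)$ is satisfied, so every $n$-dimensional $\ulambda$-space embeds into $V$; since every finite-dimensional $\ulambda$-space is $n$-dimensional for some $n$, $V$ is universal by definition. So the work is entirely in (a).

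For part (a), the strategy is an inductive extension argument. Given a target $\ulambda$-space $W$ of dimension $n$ and a partial embedding defined on a subspace $W_0 \subsetneq W$, one wants to extend it by one dimension, i.e. to find a vector $v \in V$ such that adjoining $v$ to the image matches the forms on $W$. Choosing coordinates, this amounts to solving a system of polynomial equations: for each $i$, the condition that $f_i$ restricted to $\langle \mathrm{im}(W_0), v\rangle$ reproduces the prescribed values of the form $\omega_i^W$ on $W$ is a system of polynomial equations in $v$ of degrees $1, 2, \dots, d_i$ (obtained by polarizing $f_i$ and pairing with the already-placed basis vectors). The key point is that when $(f_1,\dots,f_r)$ has sufficiently large collective strength — large relative to $n$ and the number of equations — this polynomial system over the algebraically closed field $k$ is guaranteed to have a solution. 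This is where the deep input enters: high strength of a polynomial (or a linear system thereof) forces its zero locus, and more refined ``slices'' of it, to be nonempty and indeed to have controlled codimension, so that intersecting with the finitely many conditions coming from $W_0$ still leaves a nonempty variety. One packages this as: strength $\ge s(k,\ulambda,n)$ guarantees that the relevant incidence variety surjects onto the space of ``partial data,'' and then one iterates $n$ times, decreasing the available strength in a controlled way at each step (each already-placed vector costs a bounded amount of strength), choosing $s(k,\ulambda,n)$ large enough to absorb all $n$ steps.

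The main obstacle — and the technically hardest part — is precisely the quantitative statement that large strength of a system of forms implies that the associated restriction/slicing varieties are nonempty over $k$. This is the content of the Kazhdan–Ziegler ``universality'' machinery (and its geometric reformulations via Stillman-type bounds and the polynomial functor perspective of \cite{BDDE}): one needs that strength does not drop too fast under generic linear restrictions, and that a form of positive strength has no $k$-point obstruction in the relevant sense. Algebraic closedness of $k$ is essential here (to solve the polynomial systems), as is the characteristic hypothesis (so that polarization is available and Schur functors behave well). For the purposes of the present paper, however, I would simply cite Theorem~\ref{thm:univ} as established in \cite{KaZ}, since re-deriving it is outside our scope; what we genuinely use downstream is only the clean statement that \emph{infinite collective strength $\iff$ universal} in the multi-quadratic case, together with the elementary translation (via the Example above) between strength and rank, namely that a $(2)$-form has strength $\lceil \tfrac12 \operatorname{rank}\rceil$, so ``infinite collective strength'' becomes ``every nontrivial linear combination $a\omega + b\omega'$ has infinite rank.''
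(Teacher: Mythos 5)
The paper does not prove this theorem: it is quoted from Kazhdan--Ziegler \cite{KaZ}, with the extension to the infinite-dimensional setting (which is what part (b) requires) attributed to \cite{BDDE}. Your treatment --- deducing (b) formally from (a), deferring the substance to the citation, and sketching the strength-implies-solvability extension mechanism --- matches the paper's, with the one caveat that the paper notes \cite{KaZ} only establish (a) for finite-dimensional $V$, so the application of (a) to an infinite-dimensional $V$ underlying your ``formal'' deduction of (b) is itself the content of \cite{BDDE} rather than being automatic from the finite-dimensional statement.
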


Kazhdan and Ziegler only work in the finite dimensional setting, and thus only state version (a). The infinite dimensional version, as well as a generalization to arbitrary tuples $\ulambda$, is given in \cite{BDDE}. We will mostly use the following corollary of this result:

\begin{corollary} \label{cor:univ}
In the setting of the theorem, suppose that the $f_i$'s have collective strength at least $s(k, \ulambda, 1)$. Given $c_1, \ldots, c_r \in k$, there exists $v \in V$ such that $f_i(v)=c_i$ for each $1 \le i \le n$.
\end{corollary}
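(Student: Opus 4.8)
The plan is to realize the tuple of target values $(c_1, \ldots, c_r)$ as the image of a single vector by applying Theorem~\ref{thm:univ}(a) with $n=1$. Concretely, consider the one-dimensional $\ulambda$-space $W = k \cdot w$ in which the $(d_i)$-form takes the value $c_i$ on $w$; that is, the polynomial associated to the $i$-th form is $c_i t^{d_i}$ in the coordinate $t$ dual to $w$. This is a perfectly legitimate object of $\cC^{\rf}_{\ulambda}$. By hypothesis the forms $f_1, \ldots, f_r$ on $V$ have collective strength at least $s(k, \ulambda, 1)$, so Theorem~\ref{thm:univ}(a) with $n=1$ guarantees an embedding $\iota \colon W \hookrightarrow V$ of $\ulambda$-spaces. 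Setting $v = \iota(w)$, the fact that $\iota$ respects each form gives $f_i(v) = f_i(\iota(w)) = c_i$ for all $1 \le i \le r$, which is exactly what is asserted (the statement's index range ``$1 \le i \le n$'' should read $1 \le i \le r$).

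The one point that deserves a word is why the one-dimensional space $W$ described above genuinely lies in the category: a $\lambda_i$-form on a one-dimensional space $k w$ is just an element of $\bS_{(d_i)}(kw)^\vee \cong k$, and prescribing it to send $w^{\otimes d_i}$ (equivalently, the image of $w$ under the natural map to $\Sym^{d_i}$) to $c_i$ is unambiguous since $d_i!$ is invertible. So there are no compatibility constraints to check among the $f_i$ on $W$, and any assignment of scalars $(c_1, \ldots, c_r)$ arises from such a $W$. This is where it matters that we only need to hit a single vector: for $n \ge 2$ one would have to worry about whether a prescribed collection of values on a basis is consistent, but for $n=1$ it is automatic.

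There is essentially no obstacle here; the content is entirely in Theorem~\ref{thm:univ}, and the corollary is the specialization to $n=1$ together with the trivial observation that every tuple of scalars is the value-tuple of some one-dimensional $\ulambda$-space. The only thing to be careful about is bookkeeping: matching the degrees $d_i$ correctly and recording that $s(k,\ulambda,1)$ is precisely the threshold produced by the theorem for $n=1$.
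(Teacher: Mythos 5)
Your proof is correct and is essentially identical to the paper's: both construct the one-dimensional $\ulambda$-space with prescribed form values $c_i$ on a basis vector, apply Theorem~\ref{thm:univ} to embed it into $V$, and take the image of that basis vector. Your additional remarks (the legitimacy of the one-dimensional object and the typo $n$ versus $r$ in the index range) are accurate but not needed beyond what the paper records.
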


\begin{proof}
Let $(L, f'_1, \ldots, f'_r)$ be a 1-dimensional $\ulambda$-space with basis $w$, and forms defined by $f'_i(w)=c_i$. By the theorem, there is an embedding of $\ulambda$-spaces $L \to V$. The image of $w$ is the requisite vector $v$.
\end{proof}

\begin{remark}
One can also prove the corollary using commutative algebraic properties of infinite strength polynomials proved in \cite{AH,ESS}; see \cite[Theorem~2.1]{isocubic}.
\end{remark}

The following is the second theorem on $\ulambda$-spaces that will be important to us. It was proven in \cite{homoten} using the Fra\"iss\'e limit.

\begin{theorem} \label{thm:homo}
Let $\ulambda=[\lambda_1, \ldots, \lambda_n]$ be a tuple of non-empty partitions such that each $\vert \lambda_i \vert!$ is invertible in $k$. Then there exists a universal homogeneous $\ulambda$-space of countable dimension, and it is unique up to isomorphism.
\end{theorem}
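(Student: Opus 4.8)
The uniqueness assertion is immediate from Proposition~\ref{prop:fraisse}: a universal homogeneous $\ulambda$-space is exactly a universal homogeneous ind-object of $\cC^{\rf}_{\ulambda}$, so any two are isomorphic. Thus the content is existence, and the plan is to realize the space as a Fra\"iss\'e limit of $\cC^{\rf}_{\ulambda}$. By the categorical Fra\"iss\'e theorem of \cite[\S A.6]{homoten}, for this it suffices to check three things: that $\cC^{\rf}_{\ulambda}$ has an initial object, that it has the amalgamation property, and that it admits a countable dominating family (so that the limit can be taken to be an $\bN$-indexed ind-object, hence of countable dimension). Granting these, the limit is f-injective and, since an initial object exists, universal; by Proposition~\ref{prop:fraisse} it is then homogeneous.

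First I would dispose of the soft points. The initial object is the zero space: each $\lambda_i$ is non-empty, so $\bS_{\lambda_i}(0)=0$ carries a unique $\lambda_i$-form, and $0$ embeds uniquely into every $\ulambda$-space; this also makes the joint embedding property a special case of amalgamation (take the common sub-object to be $0$). For amalgamation, given embeddings $Z\hookrightarrow V$ and $Z\hookrightarrow W$ of finite-dimensional $\ulambda$-spaces, I would take the underlying space of the amalgam to be the pushout $U=V\oplus_Z W$, fix splittings $V=Z\oplus V'$ and $W=Z\oplus W'$, and extend the forms as follows. Decomposing $\bS_{\lambda_i}(Z\oplus V'\oplus W')$ via the Cauchy/Littlewood--Richardson rule into summands $\bS_{\alpha}(Z)\otimes\bS_{\beta}(V')\otimes\bS_{\gamma}(W')$, a $\lambda_i$-form on $U$ amounts to a functional on each summand; its restriction to $V$ forgets the summands with $\gamma\neq\emptyset$ and its restriction to $W$ forgets those with $\beta\neq\emptyset$. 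Since the given forms induce the same form on $Z$ (the summands with $\beta=\gamma=\emptyset$), one defines a form on $U$ by taking the $\gamma=\emptyset$ summands from $V$, the $\beta=\emptyset$ summands from $W$, and zero on the summands involving both $V'$ and $W'$; this is consistent on the overlap and yields a $\ulambda$-space $U$ with compatible embeddings of $V$ and $W$. In the case needed in this paper, where each $\lambda_i\in\{(1),(2)\}$, this is elementary: extend the linear forms over a basis and declare the pairing between $V'$ and $W'$ to be zero.

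The hard part will be the countable dominating family, since $k$ may be uncountable and then $\cC^{\rf}_{\ulambda}$ need not have countably many isomorphism classes (for instance, there are uncountably many pairwise non-isomorphic plane cubics over $\bC$), so the Fra\"iss\'e limit over all of $\cC^{\rf}_{\ulambda}$ need not be countable-dimensional. Here I would invoke Kazhdan--Ziegler: by Theorem~\ref{thm:univ} a countable-dimensional $\ulambda$-space whose forms have infinite collective strength is universal, and its finite-dimensional subspaces then form a countable family into which every object of $\cC^{\rf}_{\ulambda}$ embeds. Running the Fra\"iss\'e construction of \cite[\S A.6]{homoten} relative to this countable cofinal family produces the required countable-dimensional, f-injective, universal ind-object. (For partitions other than $(1)$ and $(2)$ one uses the generalization of Theorem~\ref{thm:univ} in \cite{BDDE} in place of the Kazhdan--Ziegler statement. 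One could alternatively bypass the Fra\"iss\'e machinery entirely and prove homogeneity by a Witt-style back-and-forth on a countable basis, using Corollary~\ref{cor:univ} to realize the prescribed form values at each step, after first enlarging finite-dimensional subspaces to remove degeneracies; but the amalgamation-plus-limit route is cleaner to organize.)
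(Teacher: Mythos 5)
The paper does not actually prove this statement: it is imported from \cite{homoten}, with the remark that the proof there goes through the Fra\"iss\'e limit, so your overall strategy is the intended one. Your uniqueness argument is exactly right, and your amalgamation construction (pushout of underlying vector spaces, forms extended by taking the $\gamma=\emptyset$ summands from $V$, the $\beta=\emptyset$ summands from $W$, and zero on the mixed summands) is correct and is the standard way to verify amalgamation here.

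The gap is in the step you yourself flag as the hard one. First, a literal error: over an uncountable field the finite-dimensional subspaces of a countable-dimensional universal space $V_0$ form an \emph{uncountable} set, so they do not ``form a countable family''; what you can extract is a countable cofinal chain $E_1\subset E_2\subset\cdots$ exhausting $V_0$. Second, and more seriously, a countable cofinal family of \emph{objects} is not what the Fra\"iss\'e machinery of \cite[\S A.6]{homoten} requires. To build the f-injective limit $X=\bigcup_n X_n$ one must, at stage $n$, dispose of all extension problems $(Z\hookrightarrow Y,\ \gamma\colon Z\to X_n)$; even after replacing $Y$ by some $E_m$, these problems are parametrized by the $k$-points of positive-dimensional varieties (the choice of the subspace $\gamma(Z)\subset X_n$ and of the embedding $\gamma(Z)\to E_m$), hence form an uncountable set that cannot be enumerated and handled one at a time. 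What is needed is a countable dominating family of \emph{morphisms}, where a single morphism may dominate an uncountable collection of extension problems at once; producing such a family (via genericity/high-strength arguments) is the real content of the proof in \cite{homoten}, and your sketch does not supply it. Your parenthetical fallback is actually the more robust route: verify f-injectivity of an explicit high-strength space directly using Corollary~\ref{cor:univ}, which is precisely what this paper does in the multi-quadratic case (Theorem~\ref{thm:mqchar}) and which avoids any enumeration.
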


We will give an explicit description of the universal homogeneous space in the multi-quadratic case below (\S \ref{ss:mqchar}).

\subsection{Linearly oligomorphic groups} \label{ss:linolig}

An action of a group $G$ on a set $X$ is \defn{oligomorphic} if $G$ has finitely many orbits on $X^n$ for all $n \ge 0$. This condition is closely related to homogeneity, as many homogeneous relational structures have oligomorphic automorphism group. For instance, if $X$ is a countable dense totally ordered set (Example~\ref{ex:Q}) then $\Aut(X)$ acts oligomorphically on $X$. See \cite{Cameron} for general background.

In \cite{homoten}, we formulated a linear analog of the oligomorphic condition, which we now recall. Suppose that $V$ is a linear representation of a group $G$. We say that $V$ is \defn{linearly oligomorphic} if for every $d \ge 0$ there exists a finite dimensional subspace $E=E(d)$ of $V$ with the following property: if $W$ is a $d$-dimensional subspace of $V$ then there exists $g \in G$ such that $gW \subset E$. This condition can be rephrased to be a bit more succinct in the following way. Let $\Gr_d(V)$ denote the set of $d$-dimensional subspaces of $V$. Then $V$ is linearly oligomorphic if for every $d \ge 0$ there exists a finite dimensional $E \subset V$ such that the natural map
\begin{displaymath}
\Gr_d(E) \to \Gr_d(V)/G
\end{displaymath}
is surjective. The set $\Gr_d(E)$ is (the $k$-points of) a finite dimensional variety, and so the linearly oligomorphic condition intuitively means that $\Gr_d(V)/G$ is a finite dimensional space for all $d$. A \defn{linearly oligomorphic group} is a group equipped with a faithful linearly oligomorphic representation.

The following theorem from \cite{homoten} points to the importance of these groups:

\begin{theorem} \label{thm:oligo}
Let $V$ be the universal homogeneous $\ulambda$-space, in the context of Theorem~\ref{thm:homo}. Then $\Aut(V)$ is linearly oligomorphic (with respect to its action on $V$).
\end{theorem}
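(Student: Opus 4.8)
The plan is to reduce Theorem~\ref{thm:oligo} to a statement about finite dimensional $\ulambda$-spaces. Fix $d \ge 0$. We want a finite dimensional subspace $E \subset V$ such that every $d$-dimensional $W \subset V$ can be moved inside $E$ by some $g \in \Aut(V)$. The key observation is that, because $V$ is homogeneous, whether $W$ can be moved into $E$ depends only on the isomorphism class of $W$ as a $\ulambda$-space together with the restriction data of the forms. More precisely, a $d$-dimensional subspace $W$ carries an \emph{induced} $\ulambda$-space structure (restrict each $\omega_i$ to $\bS_{\lambda_i}(W)$), and since $V$ is universal every finite dimensional $\ulambda$-space arises this way. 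By homogeneity, if $W_1, W_2 \subset V$ are two $d$-dimensional subspaces whose induced $\ulambda$-space structures are isomorphic \emph{as abstract $\ulambda$-spaces}, then some $g \in \Aut(V)$ carries $W_1$ to $W_2$: indeed the isomorphism $W_1 \to W_2$ is a map of $\ulambda$-spaces, i.e.\ gives two embeddings of $W_1$ into $V$, and homogeneity produces $g$. So the orbits of $\Aut(V)$ on $\Gr_d(V)$ are in bijection with isomorphism classes of $\le d$-dimensional $\ulambda$-spaces (more carefully, with pairs consisting of such a space and an embedding up to automorphism, but the relevant count is still controlled).

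The next step is therefore to show: the set of isomorphism classes of $\ulambda$-spaces of dimension $\le d$ is ``bounded'' in a sense that lets us fit representatives of all of them inside a single finite dimensional $E \subset V$. Here I would argue as follows. Each $d$-dimensional $\ulambda$-space is determined, after choosing a basis, by finitely many scalars (the values of the forms on basis tensors), so the set of such structures is naturally the $k$-points of a finite dimensional affine variety $X_d$, with $\GL_d$ acting and the isomorphism classes being the orbits. Now enumerate a countable dense-enough collection, or better: pick inside $V$ a chain of finite dimensional subspaces $V_1 \subset V_2 \subset \cdots$ with union $V$; each $d$-dimensional $W \subset V$ lies in some $V_n$, but a priori $n$ is unbounded. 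To bound it, I would invoke universality in the ind-category together with a compactness/finiteness argument: since $V$ is $\mathrm{f}$-injective (Proposition~\ref{prop:fraisse}, using homogeneity plus universality), any embedding of a $d$-dimensional space into $V$ factors appropriately, and one can build one finite dimensional $E$ receiving a copy of \emph{every} $d$-dimensional $\ulambda$-space by taking an orthogonal-type amalgam of a set of representatives. The point is that there is, up to isomorphism, only a \emph{set} (indeed a finite-dimensional variety's worth) of $d$-dimensional $\ulambda$-spaces, and amalgamating finitely many at a time inside $V$ via f-injectivity, together with the fact that $V$ has countable dimension, lets us build $E(d)$ as a finite dimensional subspace into which every $d$-dimensional subspace of $V$ embeds as a $\ulambda$-space; then homogeneity moves the actual subspace $W$ onto that embedded copy.

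Let me restructure the amalgamation step, since that is the crux. I would proceed: let $W^{(1)}, W^{(2)}, \ldots$ enumerate a set of representatives for the isomorphism classes of $\ulambda$-spaces of dimension exactly $d$ (countably many, since $k$ — while possibly uncountable — the relevant moduli is finite dimensional over $k$; actually one must be slightly careful, but the orbit \emph{count} needed is that of a constructible quotient, and for the oligomorphic statement we need finiteness of $\Gr_d(E)\to \Gr_d(V)/G$ surjectivity, not finiteness of the target per se). Using universality, embed $W^{(1)}$ into $V$ with image $E_1$. Using f-injectivity applied to the zero space, embed $W^{(2)}$; but I actually want a \emph{single} finite dimensional $E$. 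The clean way: the universal homogeneous space $V$ contains, for each $d$, a finite dimensional subspace $E$ that is ``$d$-universal'' — every $d$-dimensional $\ulambda$-space embeds in it — because one can take the direct sum (as $\ulambda$-spaces, using the zero form on cross terms) of finitely many representatives, and the set of representatives is finite once we note that the isomorphism type of a $d$-dimensional $\ulambda$-space varies in a finite dimensional family and we may pick a finite subfamily hitting every isomorphism class up to the coarser equivalence that still guarantees a $\ulambda$-embedding...

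Honestly the cleanest route, and the one I would commit to in the paper, is: \emph{deduce Theorem~\ref{thm:oligo} directly from f-injectivity}. Fix $d$. Let $E_0 \subset V$ be \emph{any} finite dimensional subspace. Enlarge: the set $S$ of isomorphism classes of $\ulambda$-spaces that embed into $V$ and have dimension $\le d$ — I claim we may choose finitely many $W_1,\dots,W_m$ in $S$ such that every member of $S$ embeds into $W_1 \oplus \cdots \oplus W_m$ (orthogonal direct sum of $\ulambda$-spaces). This needs: $S$, up to the relation ``$A \sim B$ if each embeds in the other'', is essentially controlled by a finite dimensional variety, and moreover finitely many points suffice to dominate — this is where I expect the real work, invoking that over an uncountable field a finite dimensional variety has the property that finitely many of its $k$-points, together with the generic behavior captured by Theorem~\ref{thm:univ}(a), cover all ``embedding types''. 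Granting that, set $E \subset V$ to be a $\ulambda$-subspace isomorphic to $W_1 \oplus \cdots \oplus W_m$ (exists by universality). Then for any $d$-dimensional $W \subset V$: its induced $\ulambda$-structure lies in $S$, hence embeds into $E$ abstractly; compose with $E \hookrightarrow V$ to get a second embedding $W \to V$; homogeneity gives $g \in \Aut(V)$ carrying the inclusion $W \hookrightarrow V$ to this embedding, so $gW \subset E$. This shows $\Gr_d(E) \to \Gr_d(V)/G$ is surjective, which is the definition of linearly oligomorphic; faithfulness of the action of $\Aut(V)$ on $V$ is clear since automorphisms are linear maps on $V$. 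The main obstacle, as flagged, is the finiteness input: showing that only finitely many $\ulambda$-spaces $W_1,\dots,W_m$ are needed so that every $\le d$-dimensional $\ulambda$-space embeds in their orthogonal sum. I expect this to follow from the quantitative Kazhdan--Ziegler bound (Theorem~\ref{thm:univ}(a)): a $\ulambda$-space of collective strength $\ge s(k,\ulambda,d)$ already contains \emph{every} $d$-dimensional $\ulambda$-space, so one needs only finitely many low-strength ``exceptional'' cases plus one high-strength space, and the low-strength $\le d$-dimensional $\ulambda$-spaces form a bounded family because bounded strength means the forms are built from boundedly many lower-degree pieces — here in the $(1),(2)$ case it just means bounded rank, hence a finite dimensional moduli, of which finitely many points suffice after accounting for the group action via a standard Noetherian/constructibility argument over the uncountable field $k$.
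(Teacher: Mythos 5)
First, a point of comparison: the paper does not actually prove Theorem~\ref{thm:oligo}; it is imported from \cite{homoten}. So there is no in-paper argument to match line by line. That said, your skeleton is the standard one, and the same template appears in the paper's own proof that the automorphism group of a Type~IIb space is linearly oligomorphic (the final lemma of \S\ref{ss:IIb}, using $E_{2n,2n,2n}$ and Lemma~\ref{lem:IIb-7}): exhibit a single finite dimensional subspace $E$ into which every $d$-dimensional object embeds abstractly, then use homogeneity to move an arbitrary $W$ onto its copy inside $E$. Your reduction step is correct as stated: if the induced $\ulambda$-structure on a $d$-dimensional $W \subset V$ embeds into $E$, composing with $E \hookrightarrow V$ gives a second embedding of $W$ into $V$, and homogeneity supplies $g$ with $gW \subset E$.

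The issue is the step you yourself flag as ``the main obstacle,'' which you leave as an expectation and propose to close via moduli of isomorphism classes, ``finitely many low-strength exceptional cases,'' and a Noetherian/constructibility argument over the uncountable field. None of that is needed, and the ``exceptional cases'' framing suggests a misreading of Theorem~\ref{thm:univ}(a): the strength hypothesis there is on the \emph{ambient} space, not on the $n$-dimensional spaces being embedded. You in fact state the correct consequence in passing --- a $\ulambda$-space of collective strength at least $s(k,\ulambda,d)$ receives an embedding of \emph{every} $d$-dimensional $\ulambda$-space, low-strength ones included --- so take $m=1$, with $W_1$ a single finite dimensional space of collective strength at least $s(k,\ulambda,d)$, and the claimed finiteness is immediate; no enumeration of isomorphism classes, no orthogonal amalgam, no appeal to uncountability of $k$. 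The only points genuinely left to verify are (i) that a finite dimensional $\ulambda$-space of collective strength at least $s$ exists (easy for one-row partitions, e.g.\ generic forms, or compare Lemma~\ref{lem:IIb-2}; a copy then sits inside $V$ by universality), and (ii) that Theorem~\ref{thm:univ} as stated covers only one-row partitions, so for the general $\ulambda$ of Theorem~\ref{thm:homo} you must invoke the generalization from \cite{BDDE}. With those two remarks supplied your argument closes; as written, it stops short of a proof precisely at the point where the cited theorem would have finished it for you.
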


We now establish a few more simple properties of linearly oligomorphic groups.

\begin{proposition}
A linearly oligomorphic representation $V$ of a group $G$ has finite length.
\end{proposition}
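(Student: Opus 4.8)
The plan is to bound the length of any strictly increasing chain of subrepresentations of $V$ by $\dim E(1)$, where $E(1)$ is the finite-dimensional subspace provided by the linearly oligomorphic hypothesis in the case $d=1$. Since the length of $V$ (as a representation of $G$) is the supremum of the lengths of such chains, this suffices. Concretely, I would fix $E := E(1)$, so that every one-dimensional subspace of $V$ can be moved into $E$ by an element of $G$; equivalently, for every nonzero $v \in V$ there is $g \in G$ with $gv \in E$.

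The crux is the following strictness lemma: if $W' \subsetneq W$ are $G$-subrepresentations of $V$, then $E \cap W' \subsetneq E \cap W$. To prove it, I would first record that a subrepresentation $W$ satisfies $gW = W$ for every $g \in G$ (from $gW \subseteq W$ and $g^{-1}W \subseteq W$). Now pick $v \in W \setminus W'$; since $v \neq 0$, there is $g \in G$ with $gv \in E$. Then $gv \in gW = W$, while $gv \notin W'$ (otherwise $v \in g^{-1}W' = W'$). Hence $gv \in (E \cap W) \setminus W' \subseteq (E \cap W) \setminus (E \cap W')$, which gives the claimed strict inclusion. Taking $W' = 0$ in particular shows $E \cap W \neq 0$ whenever $W \neq 0$, though this is a special case.

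Granting the lemma, the conclusion follows quickly. Given a chain $0 = W_0 \subsetneq W_1 \subsetneq \cdots \subsetneq W_n$ of subrepresentations of $V$, intersecting with $E$ and applying the lemma at each step yields a chain
\[
0 = E \cap W_0 \subsetneq E \cap W_1 \subsetneq \cdots \subsetneq E \cap W_n
\]
of $n+1$ distinct subspaces of the finite-dimensional space $E$, so $n \le \dim E$. Therefore $V$ has length at most $\dim E(1)$, and in particular finite length.

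I do not expect a real obstacle. The only genuine ideas are to invoke the oligomorphic condition in the smallest nontrivial case $d = 1$, and to use that subrepresentations are honestly $G$-stable, so that translating a vector into $E$ keeps it inside whatever subrepresentation it came from; the rest is elementary linear algebra. The one point requiring slight care is the strictness of the displayed chain, which is precisely what the lemma is designed to supply.
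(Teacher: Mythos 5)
Your proof is correct and is essentially the paper's argument: both fix $E=E(1)$ so that every nonzero vector can be translated into $E$, and both use $G$-stability of subrepresentations to show that $W \mapsto W\cap E$ separates subrepresentations, bounding the length by $\dim E$. The only cosmetic difference is that you phrase this as strictness of inclusions along a chain, while the paper phrases it as injectivity of $W\mapsto W\cap E$.
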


\begin{proof}
Let $E$ be a finite dimensional subspace of $V$ such that $\Gr_1(E) \to \Gr_1(V)/G$ is surjective. We claim that the map
\begin{displaymath}
\{ \text{$G$-subrepresentations of $V$} \} \to \{ \text{subspaces of $E$} \}, \qquad
W \mapsto W \cap E
\end{displaymath}
is injective; this will show that $V$ has length $\le \dim(E)$. Thus suppose that $W_1$ and $W_2$ are two $G$-subrepresentations such that $W_1 \cap E = W_2 \cap E$. Given $x \in W_1$, there exists $g \in G$ such that $gx \in E$. Since $W_1$ is a subrepresentation, we have $gx \in W_1$. Thus $gx$ belongs to $W_1 \cap E = W_2 \cap E$, and so $gx \in W_2$, and so $x \in W_2$. Thus shows $W_1 \subset W_2$, and the reverse containment is similar.
\end{proof}

\begin{proposition} \label{prop:uh-irred}
Suppose we are in the setting of Theorem~\ref{thm:oligo}, and $\vert \lambda_i \vert \ge 2$ for all $i$. Then $V$ is an irreducible representation of $G=\Aut(V)$.
\end{proposition}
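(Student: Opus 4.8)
The plan is to show that any nonzero $G$-subrepresentation $W \subseteq V$ must be all of $V$, by exploiting the fact that $V$ is universal and homogeneous, which gives $\Aut(V)$ enough transitivity. The key point will be that under the hypothesis $|\lambda_i| \ge 2$ for all $i$, a nonzero vector $v \in V$ cannot be distinguished from $2v$ (or more generally from $cv$ for suitable scalars $c$) by any of the forms after a suitable modification, so the $G$-orbit of $v$ spans a subspace that is not contained in any proper subrepresentation.

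First I would recall the description of the universal homogeneous space via f-injectivity (Proposition~\ref{prop:fraisse}): $V$ is f-injective, so given any embedding $Z \hookrightarrow Y$ of finite-dimensional $\ulambda$-spaces and any embedding $Z \hookrightarrow V$, the latter extends to an embedding $Y \hookrightarrow V$. Now take $0 \ne v \in W$, and let $Z = k v$ be the one-dimensional $\ulambda$-subspace it spans, with forms $f_i|_Z$. Because each $\lambda_i$ has $|\lambda_i| \ge 2$, the form $f_i|_Z$ is a homogeneous polynomial of degree $\ge 2$ in one variable, hence is determined by a single scalar $f_i(v)$ and scales by $c^{|\lambda_i|}$ under $v \mapsto cv$. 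I would then build a two-dimensional $\ulambda$-space $Y$ containing $Z$ together with a second vector $w$ that is $G$-conjugate-ready: concretely, choose $Y$ so that $Z$ embeds into it in two different ways $\alpha, \beta$ whose images are $kv$ and $k(v + v')$ for some $v'$, arranged so that homogeneity produces an automorphism $g$ with $g v = v + v'$ while $g$ does not fix the line $kv$. Applying $g$ to $v \in W$ gives $v + v' \in W$, hence $v' \in W$; iterating and using universality to realize arbitrary finite-dimensional configurations, I would conclude $W$ contains a set of vectors spanning an infinite-dimensional, hence (by finite length) cofinite-codimensional, and ultimately all of $V$.

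A cleaner route, which I would actually pursue, is this: since $V$ has finite length, $V/W$ is a $G$-representation of smaller length, and it suffices to show $V$ has no nonzero proper subrepresentation by showing it has no nonzero proper \emph{quotient} with nontrivial $G$-action on a vector, i.e.\ that the only $G$-fixed functionals-up-to-the-structure vanish. Equivalently, I would show: for every nonzero $v \in V$, the subspace spanned by $Gv$ is all of $V$. Fix $v \ne 0$ and any $u \in V$; I must find $g$ with $gv$ involving $u$ nontrivially. Using Corollary~\ref{cor:univ} (the $f_i$ have infinite collective strength since $V$ is universal) I can find, inside $V$, vectors realizing any prescribed values of the $f_i$; combined with homogeneity this lets me move $v$ to any vector $v'$ with the same "type" (same values $f_i(v') = f_i(v)$ and same restricted forms on the relevant finite-dimensional span). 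Since $|\lambda_i|\ge 2$, scaling $v$ by a root of unity $\zeta$ with $\zeta^{|\lambda_i|}=1$ for all $i$ preserves all these values but is generally not in the span of $v$ alone — no, the real leverage is: I can find $v'$ of the same type as $v$ with $v'$ linearly independent from $v$ and from any prescribed finite set, because the "type space" is positive-dimensional (again using $|\lambda_i| \ge 2$, so the level sets $\{f_i = c_i\}$ are positive-dimensional varieties, infinite since $k$ is uncountable). Then $g v = v'$ for some $g \in G$, so $v' \in Gv$; varying over enough such $v'$ shows $\langle Gv\rangle$ is infinite-dimensional. Finally, the span of $Gv$ is a $G$-subrepresentation, so if it is proper then $V$ has a proper subrepresentation of finite codimension — but running the same argument inside that codimension, or observing that an f-injective space cannot have a finite-codimensional proper subrepresentation (any finite-dimensional piece can be embedded back in a way that escapes it), gives a contradiction.

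The main obstacle I anticipate is the last step: upgrading "$\langle Gv \rangle$ is large" to "$\langle Gv\rangle = V$." The clean way is to argue that a finite-length, universal (hence f-injective) $\ulambda$-space has no proper subrepresentation containing a nonzero vector — suppose $W \subsetneq V$ is a maximal proper subrepresentation, so $V/W$ is irreducible; pick $0 \ne v \notin W$, let $Z = kv$ with its induced forms, and use f-injectivity applied to an inclusion $Z \hookrightarrow Y$ where $Y$ is a finite-dimensional $\ulambda$-space whose automorphisms act transitively enough on copies of $Z$: the extension $Y \hookrightarrow V$ plus homogeneity produces automorphisms of $V$ moving $v$ to vectors whose images in $V/W$ are linearly independent, contradicting that $V/W$, being irreducible for the (potentially huge) group $G$, is one-dimensional only if it fails... — here one must be careful, since irreducible does not mean one-dimensional. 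The correct fix is to instead show directly that $\langle Gv\rangle$ is closed under the relevant operations forcing it to equal $V$: using homogeneity, for any $u \in V$ I can find an embedding of a suitable two-dimensional space pairing $v$ with a vector $u'$ "equivalent" to $u$, producing $g$ with $gv = v + u'$, hence $u' \in \langle Gv\rangle$; and the set of such $u'$ as I vary the embedding exhausts $V$ because every $u \in V$ sits in some such equivalent position by universality. I expect the bookkeeping of "which finite-dimensional $\ulambda$-spaces $Y$ to use, and verifying the needed partial automorphisms extend" to be where the real work lies; the hypothesis $|\lambda_i| \ge 2$ enters precisely to guarantee these $Y$'s exist (for $|\lambda_i| = 1$, a linear functional rigidly pins down a hyperplane, a genuine proper subrepresentation, which is why the hypothesis is necessary).
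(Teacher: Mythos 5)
Your proposal contains a genuine gap, and it is exactly the one you flag yourself: upgrading ``the orbit of $v$ escapes any prescribed finite-dimensional subspace'' to ``the span of $Gv$ is all of $V$.'' Your attempted repairs do not work. The span of $Gv$ being infinite-dimensional does not make it cofinite-codimensional; finite length of $V$ as a $G$-representation bounds the number of composition factors, not their dimensions, so a proper subrepresentation can perfectly well have infinite codimension (indeed several of the spaces classified in this paper decompose as sums of two infinite-dimensional irreducibles). Likewise the claim that the vectors $u'$ with $v+u'$ of the same type as $v$ exhaust $V$ is false: they form a proper subvariety (a translate of a level set of the forms), and you give no argument that this set spans. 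So none of the three proposed endgames closes the argument.

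The missing idea, which is the entirety of the paper's short proof, is to aim the auxiliary linear constraint at the \emph{whole} candidate subrepresentation rather than at a finite-dimensional piece of it. Given a proper nonzero $G$-stable subspace $W$, choose a nonzero linear functional $\eta$ on $V$ vanishing on all of $W$ (this exists for any proper subspace, regardless of codimension). Because every $\vert\lambda_i\vert\ge 2$, the augmented collection $\omega_1,\dots,\omega_r,\eta$ still has infinite collective strength (the degree-one part is the single nonzero form $\eta$), so Corollary~\ref{cor:univ} produces, for a nonzero $x\in W$, a vector $y$ with $\omega_i(y)=\omega_i(x)$ for all $i$ and $\eta(y)=1$. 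Then $kx\to ky$, $x\mapsto y$, is an isomorphism of one-dimensional $\ulambda$-spaces, homogeneity gives $g\in G$ with $gx=y$, and $y\notin W$ since $\eta(y)=1$ --- contradicting $G$-stability of $W$. This sidesteps entirely the question of whether an orbit spans. Your machinery (Corollary~\ref{cor:univ} plus homogeneity applied to one-dimensional subspaces) is the right machinery; you only need to redirect the functional from ``avoiding a finite set'' to ``annihilating $W$ itself,'' after which the proof is three lines.
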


\begin{proof}
Let $\omega_1, \ldots, \omega_r$ be the given forms on $V$. Suppose $W$ is a proper non-zero subspace of $V$. Let $\eta \colon V \to k$ be a non-zero linear functional such that $W$ is contained in $\ker(\eta)$. Let $x$ be a non-zero element of $W$. Since $\omega_1, \ldots, \omega_r, \eta$ has infinite strength, by Corollary~\ref{cor:univ}, we can find $y \in V$ such that $\omega_i(y)=\omega_i(x)$ for all $1 \le i \le r$ and $\eta(y)=1$. Observe that $y \not\in W$, since $\eta(y)=1$. Since $V$ is homogeneous, there exists $g \in G$ such that $gx=y$. This shows that $W$ is not stable by $G$, which completes the proof.
\end{proof}

\begin{proposition} \label{prop:sum-oligo}
Suppose that $V$ is a linearly oligomorphic representation of $G$. Then $V^{\oplus n}$ is also a linearly oligomorphic representation of $G$, for any $n \ge 0$.
\end{proposition}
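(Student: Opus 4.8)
The plan is to reduce directly to the linear oligomorphy of $V$ itself, using that $G$ acts diagonally on $V^{\oplus n}$, so a single group element can control all $n$ coordinates at once.

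Fix $d \ge 0$. We may assume $\dim(V) \ge nd$, since otherwise $V^{\oplus n}$ is finite dimensional and the linearly oligomorphic condition holds trivially with $E = V^{\oplus n}$. Let $\pi_i \colon V^{\oplus n} \to V$ denote the $i$-th coordinate projection. Given a $d$-dimensional subspace $W \subset V^{\oplus n}$, put $U = \pi_1(W) + \cdots + \pi_n(W) \subset V$. Then $\dim(U) \le nd$, and since every $w \in W$ has $i$-th coordinate $\pi_i(w) \in U$, we have $W \subset U^{\oplus n}$. Enlarging $U$ if necessary, we may assume $\dim(U) = nd$.

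Now apply the hypothesis that $V$ is linearly oligomorphic with parameter $nd$: there is a finite dimensional subspace $E = E(nd) \subset V$ such that every $nd$-dimensional subspace of $V$ has a $G$-translate contained in $E$. In particular, there is $g \in G$ with $gU \subset E$. Since $G$ acts diagonally on $V^{\oplus n}$, we get $gW \subset (gU)^{\oplus n} \subset E^{\oplus n}$. Thus the finite dimensional subspace $E(nd)^{\oplus n} \subset V^{\oplus n}$ witnesses that $V^{\oplus n}$ is linearly oligomorphic in degree $d$; as $d$ was arbitrary, this completes the argument.

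There is no substantial obstacle here: the only two points to notice are that the relevant parameter for $V$ is $nd$ rather than $d$ (harmless, since we only need \emph{some} finite dimensional witness), and that the diagonality of the $G$-action on $V^{\oplus n}$ is exactly what allows one element $g$ to simultaneously conjugate all coordinate projections of $W$ into $E$.
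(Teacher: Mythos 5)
Your proof is correct and follows essentially the same route as the paper: project $W$ to each coordinate, sum the projections to get a subspace of $V$ of dimension at most $nd$, and move it into a fixed finite dimensional $E$ using the diagonal action. The only cosmetic difference is that you pad $U$ up to dimension exactly $nd$ so as to apply the defining condition at a single dimension, whereas the paper chooses $E$ working simultaneously for all dimensions $m \le nd$; both handle the same minor point.
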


\begin{proof}
Let $d \ge 0$ be given. Let $E$ be a finite dimensional subspace of $V$ such that $\Gr_m(E)$ surjects onto $\Gr_m(V)/G$ for all $0 \le m \le nd$; this exists since the action of $G$ on $V$ is linearly oligomorphic. Let $W$ be a $d$-dimensional subspace of $V^{\oplus n}$, let $W_i \subset V$ be the $i$th projection of $W$, and let $W'=\sum_{i=1}^n W_i$. Then $\dim(W') \le nd$, and so there exists $g \in G$ such that $g W' \subset E$. It follows that $gW_i \subset E$ for each $i$, and so $gW \subset E^{\oplus n}$. We thus see that $\Gr_d(E^{\oplus n})$ surjects onto $\Gr_d(V^{\oplus n})/G$, which completes the proof.
\end{proof}

We do not know if a direct sum of two linearly oligomorphic representations of the same group is linearly oligomorphic, in general. However, we do have the following result:

\begin{proposition} \label{prop:sum-oligo-2}
Suppose $V$ and $W$ are linearly oligomorphic representations of groups $G$ and $H$. Then $V \oplus W$ is a linearly oligomorphic representation of $G \times H$.
\end{proposition}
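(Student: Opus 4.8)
The plan is to exploit the product structure of $G \times H$: a $d$-dimensional subspace $U$ of $V \oplus W$ is contained in the direct sum of its two projections, and over $G \times H$ we may correct these two projections by \emph{independent} group elements, one from $G$ acting on $V$ and one from $H$ acting on $W$.

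Concretely, fix $d \ge 0$. First I would invoke the linear oligomorphy of $V$ to produce a finite-dimensional subspace $E \subset V$ such that $\Gr_m(E)$ surjects onto $\Gr_m(V)/G$ for all $0 \le m \le d$; such an $E$ exists directly from the definition (for instance one can take the sum of the subspaces furnished in each dimension $\le d$, exactly as in the proof of Proposition~\ref{prop:sum-oligo}). Likewise let $F \subset W$ be a finite-dimensional subspace doing the same job for the action of $H$ on $W$ in all dimensions $\le d$. I claim the finite-dimensional subspace $E \oplus F$ of $V \oplus W$ witnesses linear oligomorphy of $V \oplus W$ in dimension $d$.

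To verify this, let $U \subset V \oplus W$ be a $d$-dimensional subspace, write $\pi_V$ and $\pi_W$ for the projections of $V \oplus W$ onto its two factors, and put $U_V = \pi_V(U)$ and $U_W = \pi_W(U)$. Then $\dim U_V \le d$, $\dim U_W \le d$, and $U \subseteq U_V \oplus U_W$. By the choice of $E$ there is $g \in G$ with $g U_V \subseteq E$, and by the choice of $F$ there is $h \in H$ with $h U_W \subseteq F$. Since $(g,h) \in G \times H$ acts on $V \oplus W$ by $(g,h)\cdot(v,w) = (gv,hw)$, we get
\[
(g,h)\cdot U \;\subseteq\; (g,h)\cdot(U_V \oplus U_W) \;=\; (gU_V) \oplus (hU_W) \;\subseteq\; E \oplus F .
\]
As $d$ was arbitrary, $V \oplus W$ is a linearly oligomorphic representation of $G \times H$.

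I do not anticipate a real obstacle here. The only point requiring any care is that a $d$-dimensional subspace of $V \oplus W$ need not project onto a $d$-dimensional subspace in either factor, which is why $E$ and $F$ are chosen to accommodate all subspaces of dimension at most $d$ rather than exactly $d$. The reason this is simpler than Proposition~\ref{prop:sum-oligo} is precisely that over a product group the two projections can be moved by separate group elements, so there is no need to enlarge $d$ to a multiple of itself in order to translate several subspaces simultaneously.
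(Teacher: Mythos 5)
Your proof is correct and follows essentially the same route as the paper's: project $U$ onto the two factors and move each projection separately using the independent group elements available in $G \times H$. The only difference is your (sensible) extra care in arranging $E$ and $F$ to handle all subspaces of dimension at most $d$ rather than exactly $d$, a point the paper's proof glosses over.
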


\begin{proof}
Let $d$ be given. Let $E$ and $F$ be $d$-dimensional subspaces of $V$ and $W$ such that $\Gr_d(E) \to \Gr_d(V)/G$ and $\Gr_d(F) \to \Gr_d(W)/H$ are surjective. Let $U$ be a $d$-dimensional subspace of $V \oplus W$. We can then find $g \in G$ and $h \in H$ such that $g\pi_1(U) \subset E$ and $h\pi_2(U) \subset F$, where $\pi_i$ is the projection map. Thus $(g,h) U \subset E \oplus F$, and so the map
\begin{displaymath}
\Gr_d(E \oplus F) \to \Gr_d(V \oplus W)/(G \times H)
\end{displaymath}
is surjective, as required.
\end{proof}

\subsection{First-order theories} \label{ss:logic}

Fix a tuple\footnote{We work with one row partitions to keep the discussion simple; see \cite[\S 5.1]{homoten} for the general case.} $\ulambda=[(d_1), \ldots, (d_r)]$ where each $d_i$ is a positive integer such that $d_i!$ is invertible in $k$. We will regard $(d)$-forms as symmetric multilinear forms in what follows. We now recall how to discuss $\ulambda$-spaces from the point of view of model theory. We refer to \cite[\S 5.1]{homoten} for a more detailed discussion; however, the examples discussed below should also help clairfy the key ideas.

We consider the following first-order language. There are two sorts of variables, namely, scalars ($\alpha$, $\beta$, and so on) and vectors ($x$, $y$, and so on). There are binary function symbols for addition and multiplication of scalars, as well as constant scalar symbols for each element of $k$. There are also binary function symbols for addition of vectors, and scalar multiplication, and a constant vector symbol 0. Finally, there are $r$ function symbols $\omega_1, \ldots, \omega_r$. The symbol $\omega_i$ takes $d_i$ vector inputs and produces one scalar output. It is also multi-linear and symmetric.

Recall that a \defn{formula} is a grammatical expression in our first-order language that is allowed to have free variables, while a \defn{sentence} is a formula with no free variables. Suppose that $V$ is a $\ulambda$-space. If $\phi(x_1, \ldots, x_n)$ is a formula, then it defines a subset of $V^n$, namely, the set of tuples where $\phi$ is true. We call such subsets \defn{definable}. If $\phi$ is a sentence then it is either true or false on $V$. The collection of all true sentences is called the \defn{theory} of $V$, and denoted $\Th(V)$.

\begin{example}
Consider the formula
\begin{displaymath}
\theta_2(x_1,x_2) \colon \forall \alpha_1, \alpha_2 (\alpha_1 x_1 + \alpha_2 x_2 \implies \alpha_1=0 \land \alpha_2=0).
\end{displaymath}
In this formula, $x_1$ and $x_2$ are free variables, since they are not quantified over. The formula $\theta_2$ expresses the linear independence of $x_1$ and $x_2$; that is, if $V$ is a $\ulambda$-space then
\begin{displaymath}
\{ (x_1,x_2) \in V^2 \mid \theta_2(x_1,x_2) \}
\end{displaymath}
is the set of linearly independent pairs of vectors. Now consider the formula
\begin{displaymath}
\exists x_1, x_2 (\theta_2(x_1,x_2)).
\end{displaymath}
This is a sentence since there are no free variables. This sentence belongs to $\Th(V)$ if and only if $\dim(V) \ge 2$. There is a similar formula $\theta_n(x_1, \ldots, x_n)$ that detects linear independence of $n$ vectors. We thus see that from $\Th(V)$ one can determine if $V$ is finite dimensional, and if so, what its dimension is.
\end{example}

\begin{example}
Suppose $\ulambda=[(2)]$. Consider the formula
\begin{displaymath}
\psi(x) \colon \forall y (\omega(x,y)=0).
\end{displaymath}
This formula expresses that $x$ belongs to $\ker(\omega)$, the nullspace of $\omega$. In particular, we see that if $V$ is a $\ulambda$-space then $\ker(\omega)$ is a definable subspace of $V$. Combined with the previous example, we see that from $\Th(V)$ one can determine if $\ker(\omega)$ is finite dimensional, and if so, what its dimension is. Indeed, $\dim(\ker(\omega)) \ge n$ if and only if $\Th(V)$ contains the sentence
\begin{displaymath}
\exists x_1, \ldots, x_n (\psi(x_1) \land \cdots \land \psi(x_n) \land \theta_n(x_1, \ldots, x_n)). \qedhere
\end{displaymath}
\end{example}

Suppose $V$ is a $\ulambda$-space; as always, we assume $\dim(V) \le \aleph_0$. We say that $V$ is \defn{linearly $\omega$-categorical}\footnote{The $\omega$ here refers to the first infinite ordinal, and not a multilinear form.} if whenever $W$ is a $\ulambda$-space with $\Th(V)=\Th(W)$ (and $\dim(W) \le \aleph_0$), we have an isomorphism $V \cong W$. Essentially, this means that the isomorphism type of $V$ admits a (possibly infinite) first-order axiomatization. For example, if $k$ is algebraically closed and $\ulambda=[(2)]$ then a $\ulambda$-space is determined up to isomorphism by its rank and the dimension of its nullspace; since these one can recover these from the theory, any $\ulambda$-space is linearly $\omega$-categorical.

The Ryll-Nardzewski theorem is a classical result in model theory, which states that if $X$ is a countable relational structure then $X$ is $\omega$-categorical if and only if $\Aut(X)$ acts oligomorphically on $X$. The statement admits a natural analog for tensor spaces:

\begin{conjecture} \label{conj:RN}
A $\ulambda$-space $V$ is linearly $\omega$-categorical if and only if $\Aut(V)$ is linearly oligomorphic.
\end{conjecture}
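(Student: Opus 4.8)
The plan is to mirror the classical proof of the Ryll--Nardzewski theorem, which factors through an intermediate ``counting of types'' condition, while replacing every \emph{finiteness} assertion by a finite-\emph{dimensionality} assertion: because the scalar sort $k$ is uncountable, a $\ulambda$-space can never have literally finitely many orbits of tuples, so the honest analog of ``finitely many $n$-types'' must be geometric.

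First I would set up a geometric theory of types. For a tuple $\bx=(x_1,\dots,x_n)$ of vector variables, expanding each $\omega_i$ by multilinearity shows that every atomic formula is a polynomial condition, with coefficients in $k$, on the finite-dimensional affine space $\cT_n=\prod_i \Sym^{d_i}((k^n)^\ast)$ of $\ulambda$-structures on $k^n$ (the linear-dependence data among the $x_j$ being itself atomic). Thus the quantifier-free $n$-type of a tuple in a $\ulambda$-space $V$ is exactly the point of $\cT_n$ it induces by pullback, and the realized quantifier-free types form the image $\age_n(V)\subseteq\cT_n$ of the map carrying an $n$-tuple to its induced structure. Give $\age_n(V)$ its structure as a constructible subset of $\cT_n$, let $S_n(\Th(V))$ be the space of complete $n$-types, and call $V$ of \emph{finite-dimensional type space} if each $\age_n(V)$ is finite dimensional and, moreover, the restriction map $S_n(\Th(V))\to\age_n(V)$ has finite-dimensional fibers (geometric isolation, i.e.\ quantifier elimination up to a finite-dimensional family). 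The target is then the chain
\begin{displaymath}
\text{linearly oligomorphic} \iff \text{finite-dimensional type space} \iff \text{linearly } \omega\text{-categorical}.
\end{displaymath}

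The implication linearly oligomorphic $\Rightarrow$ finite-dimensional type space is the routine half: the hypothesis gives, for each $d$, a finite-dimensional $E\subseteq V$ with $\Gr_d(E)\to\Gr_d(V)/\Aut(V)$ surjective, and pulling back the forms along $\Gr_d(E)$ exhibits $\age_d(V)$ as the image of a morphism from the finite-dimensional variety $\Gr_d(E)$; the fiber control is what forces the further isolation statement. The passage finite-dimensional type space $\Rightarrow$ linearly $\omega$-categorical I would run as a linear back-and-forth: if $\Th(W)=\Th(V)$ then $\age_n(W)=\age_n(V)$ as constructible sets (membership of each rational point is first-order, via formulas of the type $\theta_n$ and $\psi$ of the examples, so the two ages agree on $k$-points and hence, $k$ being algebraically closed, as varieties), and geometric isolation lets one extend a partial isomorphism between finite-dimensional subspaces one vector at a time by solving the relevant system of form-equations inside the common realized type space --- precisely the step where Corollary~\ref{cor:univ} and Witt/homogeneity-style extension arguments enter.

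The hard part, and the reason this stays a conjecture, is the omitting-types half: that failure of linear oligomorphy produces a \emph{non-isomorphic} $W$ with $\Th(W)=\Th(V)$. Classically one builds $W$ by a Baire-category argument realizing a non-principal type, but that construction requires a \emph{countable} language, whereas ours has a constant symbol for every element of the uncountable field $k$, so omitting types is simply unavailable. The core task is therefore to replace it by a geometric genericity construction: when some $\age_d(V)$ or some fiber of $S_d\to\age_d(V)$ is infinite dimensional, produce a countable-dimensional $\ulambda$-space elementarily equivalent to $V$ that realizes a \emph{generic} point of the type variety, and prove it cannot be isomorphic to $V$. I expect this to demand a substitute for Schur's lemma that survives dropping the uncountability of $k$ (cf.\ the first question of the introduction), together with a decoupling of the field sort from the vector sort so that the generic point can be chosen transcendental over the data already present; making this ``generic type'' construction unconditional is, in my estimation, the central obstacle to the full conjecture.
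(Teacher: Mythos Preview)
The statement you are addressing is presented in the paper as an open \emph{conjecture} (Conjecture~\ref{conj:RN}); there is no proof in the paper to compare against. The paper's contribution toward it is only to verify both conditions separately, by explicit case analysis, for the seven families of length-two universal biquadratic spaces (Theorem~\ref{mainthm2}), and to cite prior work for the universal homogeneous case. It never deduces one condition from the other by any general mechanism of the sort you propose.

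Your write-up is candid about this: you flag the omitting-types direction as ``the reason this stays a conjecture'' and correctly identify the obstruction, namely that the classical Baire-category construction requires a countable language while ours carries a scalar constant for each element of the uncountable field $k$. That diagnosis is consistent with the paper's own view of the problem. I would add, though, that even the direction you treat as essentially routine is not fully justified in your sketch. The back-and-forth step ``finite-dimensional type space $\Rightarrow$ linearly $\omega$-categorical'' requires, at each stage, that the target model realize the type of the new vector over the already-matched subspace; your notion of geometric isolation concerns $n$-types over $\emptyset$, not types over parameters, so it does not obviously supply this. Your appeal to Corollary~\ref{cor:univ} only covers spaces of high collective strength, and the ``Witt/homogeneity-style extension arguments'' you invoke presuppose precisely the f-injectivity that the paper has to establish by hand, space by space (see e.g.\ Proposition~\ref{prop:IIb-4} for Type~IIb). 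So what you have written is a reasonable research programme rather than a proof, and the paper does not claim otherwise.
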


In \cite[Theorem~5.13]{homoten}, we show that the universal homogeneous $\ulambda$-space is linearly $\omega$-categorical. Combined with Theorem~\ref{thm:oligo}, this verifies Conjecture~\ref{conj:RN} for these spaces. The present paper adds a bit more evidence for the conjecture.

\section{Irreducible multi-quadratic spaces} \label{s:multiq}

\textit{For the remainder of the paper, $k$ denotes an uncountable algebraically closed field of characteristic $\ne 2$.}

\subsection{Characterization of homogeneity} \label{ss:mqchar}

For a quadratic form $\omega$ on a vector space $V$ and an element $x \in V$, we let $\omega_x$ be the linear functional $\omega(x, -)$. Let $(V, \omega_1, \ldots, \omega_r)$ be a multi-quadratic space. Define $\Delta=\Delta(\omega_1, \ldots, \omega_r)$ to be the set of tuples $(x_1, \ldots, x_r)$ such that $\sum_{i=1}^r (\omega_i)_{x_i} = 0$. This is a linear subspace of $V^r$.

\begin{theorem} \label{thm:mqchar}
The space $V$ is universal homogeneous if and only if $V$ is infinite dimensional and $\Delta=0$.
\end{theorem}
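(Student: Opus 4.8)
The plan is to use the Fra\"iss\'e-theoretic machinery from \S\ref{ss:fraisse}: by Proposition~\ref{prop:fraisse}, the universal homogeneous $\ulambda$-space is the unique universal $\ulambda$-space that is f-injective, so it suffices to prove that an infinite-dimensional multi-quadratic space $V$ with $\Delta(V)=0$ is universal and f-injective, and conversely that these two properties force $\Delta=0$ and infinite-dimensionality. Universality should be immediate from Theorem~\ref{thm:univ}(b) once we observe that $\Delta=0$ is equivalent to the collective strength of $(\omega_1,\ldots,\omega_r)$ being infinite: a non-trivial linear combination $\sum c_i\omega_i$ of finite rank $2s$ would (after massaging) produce a non-zero tuple in $\Delta$, and conversely a non-zero tuple $(x_1,\ldots,x_r)\in\Delta$ with, say, $x_1\neq 0$ exhibits $\omega_1$ as having a vector in its radical modulo the others, which one can parlay into a finite-strength combination. (This rank/strength dictionary is essentially the content of the Example on $(2)$-forms in \S\ref{s:prelim}, applied one form at a time.)

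The substantive part is f-injectivity. So let $Z\hookrightarrow Y$ be an embedding of finite-dimensional multi-quadratic spaces and $Z\hookrightarrow V$ an embedding into $V$; we must extend $Z\to V$ to $Y\to V$. By induction on $\dim Y-\dim Z$ we reduce to the case $\dim Y=\dim Z+1$, say $Y=Z\oplus ky$. Identifying $Z$ with its image in $V$, we must find $v\in V$ with $v\notin Z$ such that $\omega_i(v,z)=\omega_i(y,z)$ for all $z\in Z$ and all $i$, and $\omega_i(v,v)=\omega_i(y,y)$ for all $i$. The linear conditions $\omega_i(v,z)=\omega_i(y,z)$ say that $v$ must lie in a fixed affine subspace $v_0+Z^{\perp}$ of $V$, where $Z^{\perp}=\bigcap_{i,z}\ker(\omega_i)_z$ is the joint orthogonal complement of $Z$ and $v_0$ is any particular solution (which exists because $\Delta=0$ forces the map $V\to (Z^*)^r$, $v\mapsto(\omega_i(v,-)|_Z)_i$, to be surjective — its cokernel is controlled by $\Delta$). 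Restricting each $\omega_i$ to the affine space $v_0+Z^{\perp}$ and expanding $\omega_i(v_0+w,v_0+w)=\omega_i(v_0,v_0)+2\omega_i(v_0,w)+\omega_i(w,w)$, the problem becomes: find $w\in Z^{\perp}$ with $\omega_i(w,w)+2\omega_i(v_0,w)=c_i$ for prescribed constants $c_i$, together with the open condition $v_0+w\notin Z$.

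To solve this we want to apply Corollary~\ref{cor:univ} to the multi-quadratic space $Z^{\perp}$ equipped with the (inhomogeneous) quadratic functions $w\mapsto\omega_i(w,w)+2\omega_i(v_0,w)$ — or, more cleanly, to the subspace $Z^{\perp}$ with its restricted forms $\omega_i|_{Z^{\perp}}$ after first finding one vector realizing the linear parts. The key point that makes this work is that restricting the forms from $V$ to the finite-codimension subspace $Z^{\perp}$ does not destroy infinite collective strength: since $Z$ is finite-dimensional, $Z^{\perp}$ has finite codimension in $V$, and passing to a subspace of finite codimension changes the rank of any finite-rank form by a bounded amount and keeps any infinite-rank form infinite-rank; hence $\Delta(Z^{\perp})=0$ as well, so $Z^{\perp}$ is again universal and Corollary~\ref{cor:univ} applies to hit any tuple of scalar values. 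The genericity condition $v_0+w\notin Z$ is harmless: $Z$ is finite-dimensional while $Z^{\perp}$ is infinite-dimensional, and the set of $w$ with $v_0+w\in Z$ is an affine subspace of dimension $<\infty$, so after choosing $w$ we may, if necessary, add to it a further vector $w'\in Z^{\perp}$ with all $\omega_i(w',-)$ vanishing on the relevant finite-dimensional space and all $\omega_i(w',w')=0$ — such $w'$ exists, again by Corollary~\ref{cor:univ} applied on a suitable further finite-codimension subspace of $Z^{\perp}$ — without changing the values. I expect the main obstacle to be precisely this bookkeeping: verifying that each time we cut $V$ down by finitely many linear conditions the collective strength (equivalently, the condition $\Delta=0$) is preserved, and organizing the argument so that the linear constraints, the quadratic constraints, and the open condition $v\notin Z$ are all met simultaneously. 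For the converse direction, if $\Delta\neq 0$ pick $0\neq(x_1,\ldots,x_r)\in\Delta$ with $x_1\neq0$; then for any $v$ the value $\omega_1(v,x_1)$ is determined by the values $\omega_i(v,x_i)$, $i\geq 2$, so taking $Z=\langle x_1,\ldots,x_r\rangle$ (or a slightly larger space) one exhibits an extension problem over $Z$ that has no solution in $V$ but does in some larger multi-quadratic space, contradicting f-injectivity; and if $V$ is finite-dimensional it is not even universal. Finally, uniqueness is the ``moreover'' clause of Proposition~\ref{prop:fraisse}.
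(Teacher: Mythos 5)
Your proposal is correct and follows essentially the same route as the paper: reduce via Proposition~\ref{prop:fraisse} to showing that $\Delta=0$ plus infinite dimension gives universality and f-injectivity (using that $\Delta=0$ forces infinite collective strength and the linear independence of the functionals $\omega_i(x_j,-)$, then invoking Corollary~\ref{cor:univ} for a one-dimensional extension), and conversely that a non-zero element of $\Delta$ yields a one-dimensional extension problem that cannot be solved in $V$. Two small points of comparison: the paper avoids your two-step ``first solve the linear system, then work on $Z^\perp$'' organization by applying Corollary~\ref{cor:univ} once to the combined system of quadratic and linear constraints together with one extra functional $\lambda$ vanishing on $Z$ with $\lambda(v)=1$, which disposes of the condition $v\notin Z$ more cleanly than your perturbation argument; and your parenthetical claim that $\Delta(Z^\perp)=0$ is neither needed nor immediate --- what you actually use, and what does follow from finite codimension, is only that the restricted forms retain infinite collective strength.
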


\begin{proof}
If $V$ is finite dimensional it is clearly not universal. Suppose $\Delta \ne 0$. Let $(x_1, \ldots, x_r)$ be a non-zero element of $\Delta$, and let $W$ be the subspace of $V$ spanned by the $x_i$'s. Without loss of generality, we suppose that $x_1, \ldots, x_s$ form a basis of $W$. Define a new multi-quadratic space $(W', \omega'_1, \ldots, \omega'_r)$ as follows. The underlying vector space is $W'=W \oplus ky$, where $y$ is a new basis vector. The form $\omega'_i$ restricts to $\omega_i$ on $W$. For $1 \le i \le r$ and $1 \le j \le s$, we define $\omega'_i(x_j, y)$ to be~1 if $i=j=1$, and~0 otherwise; note that $\sum_{i=1}^r \omega'_i(x_i, y) = 1$. We also put $\omega'_i(y,y)=0$ for all $i$. The embedding $W \to V$ clearly cannot extend to $W'$, and so $V$ is not f-injective.

Now suppose that $\Delta=0$ and $V$ is infinite dimensional. We first claim that $(\omega_1, \ldots, \omega_r)$ has infinite collective strength. Indeed, suppose a non-trivial linear combination $\omega=\sum_{i=1}^r \alpha_i \omega_i$ has finite rank. Let $x \in V$ be a non-zero element of the nullspace of $\omega$, which exists since $V$ is infinite dimensional. Then $(\alpha_1 x, \ldots, \alpha_r x)$ is a non-zero element of $\Delta$, a contradiction. This proves the claim.

We now show that $V$ is f-injective. Let $W \subset V$ be a finite dimensional subspace, and let $W \to W'$ be an embedding where $W$ has codimension one in $W'$. We must show that we can compatibly embed $W'$ into $V$. Let $x_1, \ldots, x_n$ be a basis for $W$, and let $y$ be a vector in $W'$ that does not belong to $W$. Note that the $rn$ linear functionals $\omega_i(x_j, -)$ on $V$ are linearly independent since $\Delta=0$. Let $\lambda$ be a linear functional on $V$ that vanishes on $W$ and is linearly independent from these functionals; this exists since the space of functionals vanishing on $W$ is infinite dimensional. Applying Corollary~\ref{cor:univ}, we can find $z \in V$ such that
\begin{displaymath}
\omega_i(z,z) = \omega_i(y,y), \qquad
\omega_i(x_j,z) = \omega_i(x_j,y), \qquad
\lambda(z)=1.
\end{displaymath}
for all $1 \le i \le r$ and $1 \le j \le n$. Indeed, the previous paragraph shows that the degree two equations here have infinite collective strength, while we have just seen that the linear equations are linearly independent. Since $\lambda(z)=1$, it follows that $z$ does not belong to $W$. We can now embed $W'$ into $V$ by $y \mapsto z$, which completes the proof.
\end{proof}

\begin{remark}
The theorem can be restated in the following more concrete manner. Let $V$ be a vector space of countable infinite dimension equipped with quadratic forms $\omega_1, \ldots, \omega_r$. Choose a basis for $V$ and let $A_i$ be the matrix for the form $\omega_i$. Then $(V, \omega_1, \ldots, \omega_r)$ is universal homogeneous if and only if the columns of the $A_i$'s (taken together) are linearly independent. Indeed, this is simply a translation of the $\Delta=0$ condition.
\end{remark}

\begin{remark}
Any two universal homogeneous $r$-quadratic spaces are isomorphic (Proposition~\ref{prop:fraisse}). In light of the previous remark, this can be cast into concrete terms as follows. Let $(A_1, \ldots, A_r)$ be symmetric matrices with rows and columns indexed by $\{1,2,3,\ldots\}$ such that the columns of the $A_i$'s (taken together) are linearly independent. Let $(B_1, \ldots, B_r)$ be a second such tuple. Then there is an invertible matrix $g$ such that $B_i=gA_ig^t$ for each $1 \le i \le r$.
\end{remark}

\subsection{Schur's lemma} \label{ss:schur}

We now recall a version of Schur's lemma in the infinite dimensional setting. For the next set of results, $G$ is a group. The hypothesis we have imposed on $k$ (uncountable and algebraically closed) is crucial in these results.

\begin{proposition} \label{prop:schur}
Let $V$ be an irreducible representation of $G$ over $k$ of countable dimension. Then $\End_G(V)=k$.
\end{proposition}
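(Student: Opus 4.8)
The plan is to prove the countable-dimensional analog of Schur's lemma by a counting argument in the spirit of the proof that a field of cardinality $< \mathfrak{c}$ over which $k(x)$ embeds cannot exist when $k$ is uncountable. Let $D = \End_G(V)$. Since $V$ is irreducible, any nonzero $G$-equivariant endomorphism is invertible (its kernel and image are subrepresentations), so $D$ is a division algebra over $k$. Because $k$ is algebraically closed, any element of $D$ that is algebraic over $k$ must lie in $k$ itself; thus it suffices to show every $\phi \in D$ is algebraic over $k$.

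First I would observe that $D$ has dimension at most $\aleph_0$ as a $k$-vector space: fixing a nonzero vector $v \in V$, the evaluation map $D \to V$, $\phi \mapsto \phi(v)$, is injective (if $\phi(v) = 0$ then $\ker \phi$ is a nonzero subrepresentation, hence all of $V$), and $\dim_k V \le \aleph_0$. Now suppose toward a contradiction that some $\phi \in D$ is transcendental over $k$. Then the $k$-subalgebra $k[\phi] \subset D$ is a polynomial ring, and since $D$ is a division algebra, $\phi$ is invertible, so in fact $k(\phi) \subset D$. But $k(\phi) \cong k(t)$, the field of rational functions, which has dimension $\ge |k|$ over $k$: indeed the elements $\{1/(\phi - a) : a \in k\}$ are $k$-linearly independent (a finite linear dependence would, clearing denominators, give a nonzero polynomial identity vanishing on the transcendental $\phi$). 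Since $k$ is uncountable, $|k| > \aleph_0$, contradicting $\dim_k D \le \aleph_0$.

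Hence every element of $D$ is algebraic over $k$, and therefore, since $k$ is algebraically closed, lies in $k$. This gives $D = k$, as desired. The one point requiring a little care — and the only real obstacle — is the linear independence of $\{(\phi-a)^{-1}\}_{a\in k}$ inside the division ring $D$: one must check that the standard partial-fractions argument goes through, i.e.\ that a relation $\sum_{i=1}^n c_i (\phi - a_i)^{-1} = 0$ with distinct $a_i$ and $c_i \in k$ forces all $c_i = 0$. Multiplying through by $\prod_j (\phi - a_j)$ (which commutes with everything since it lies in the commutative ring $k[\phi]$) yields $\sum_i c_i \prod_{j \ne i}(\phi - a_j) = 0$ in $k[\phi] \cong k[t]$; evaluating the polynomial identity $\sum_i c_i \prod_{j\ne i}(t-a_j) = 0$ at $t = a_i$ gives $c_i \prod_{j \ne i}(a_i - a_j) = 0$, hence $c_i = 0$. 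This completes the argument.
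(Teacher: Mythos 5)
Your proof is correct and takes essentially the same route as the paper's: realize $\End_G(V)$ as a division algebra of countable $k$-dimension and rule out transcendental elements because $k(t)$ has dimension $\ge |k| > \aleph_0$ over the uncountable algebraically closed field $k$. The only (immaterial) difference is that the paper bounds $\dim_k k(T)$ by viewing $V$ as a $k(T)$-vector space, whereas you bound $\dim_k \End_G(V)$ directly via the injective evaluation map $\phi \mapsto \phi(v)$.
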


\begin{proof}
Let $T \colon V \to V$ be an operator commuting with $G$. Since $\End_G(V)$ is a division algebra, it contains the field $k(T)$, and so $V$ has the structure of a $k(T)$-vector space. Since $V$ has countable dimension over $k$, it follows that $k(T)$ must also have countable dimension over $k$. Since any proper extension of $k$ has uncountable dimension over $k$, it follows that $k(T)=k$, and so $T$ is a scalar.
\end{proof}

\begin{corollary} \label{cor:schur1}
Let $V$ and $W$ be finite length representations of $G$ over $k$ of countable dimensions. Then $\Hom_G(V,W)$ is finite dimensional as a $k$-vector space.
\end{corollary}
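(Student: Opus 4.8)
The plan is to induct on $\ell(V)+\ell(W)$, where $\ell$ denotes length, reducing to the irreducible case by exploiting the left-exactness of $\Hom_G$ in each variable.

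For the base case I would take $V$ and $W$ both irreducible. If there is no nonzero $G$-equivariant map $V \to W$ then $\Hom_G(V,W)=0$ and we are done. Otherwise any nonzero $\phi \in \Hom_G(V,W)$ has kernel a proper subrepresentation of $V$, hence $0$, and image a nonzero subrepresentation of $W$, hence all of $W$; so $\phi$ is an isomorphism. Composition with $\phi$ then identifies $\Hom_G(V,W)$ with $\End_G(V)$, which equals $k$ by Proposition~\ref{prop:schur} (applicable since $V$ has countable dimension). Thus $\Hom_G(V,W)$ is at most one-dimensional.

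For the inductive step, assume $\ell(V) \ge 2$ (the case $\ell(W)\ge 2$ being symmetric). Pick a proper nonzero subrepresentation $V' \subset V$ with quotient $V''=V/V'$; both have countable dimension and length strictly less than $\ell(V)$. Applying $\Hom_G(-,W)$ to $0 \to V' \to V \to V'' \to 0$ gives a left-exact sequence
\begin{displaymath}
0 \to \Hom_G(V'',W) \to \Hom_G(V,W) \to \Hom_G(V',W),
\end{displaymath}
whence $\dim_k \Hom_G(V,W) \le \dim_k \Hom_G(V'',W) + \dim_k \Hom_G(V',W)$, which is finite by the induction hypothesis. When instead $\ell(W)\ge 2$, I would run the same argument with a short exact sequence $0 \to W' \to W \to W'' \to 0$ and the left-exactness of $\Hom_G(V,-)$.

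I do not anticipate a real obstacle: the content is carried entirely by Proposition~\ref{prop:schur} together with the sub-additivity of $\dim_k$ along left-exact sequences. The only things to verify are bookkeeping — that the subquotients appearing remain of countable dimension (immediate, being subquotients of $V$ or $W$) and that $\Hom_G$ is left exact in each argument (formal). An equivalent, slightly more hands-on route would be to fix composition series of $V$ and $W$ and bound $\dim_k \Hom_G(V,W)$ by $\sum_{i,j} \dim_k \Hom_G(L_i, L'_j)$ over the simple constituents, each term being $0$ or $1$ by the base case; but the induction above seems the crispest way to organize the argument.
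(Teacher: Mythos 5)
Your argument is correct and is essentially the paper's own proof: the paper reduces to the irreducible case ``by d\'evissage'' and then applies Proposition~\ref{prop:schur}, which is exactly the induction on length via left-exactness of $\Hom_G$ that you spell out. No gaps.
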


\begin{proof}
By d\'evissage, we reduce to the case where $V$ and $W$ are irreducible. If $V \cong W$ then $\Hom_G(V,W) \cong \End_G(V)$ is one-dimensional. Otherwise, $\Hom_G(V,W)=0$.
\end{proof}

\begin{corollary} \label{cor:schur2}
Let $V$ be an irreducible representation of $G$ over $k$ of countable dimension. Then the subrepresentations of $V^{\oplus n} = V \otimes k^n$ are exactly those of the form $V \otimes E$, where $E$ is a subspace of $k^n$.
\end{corollary}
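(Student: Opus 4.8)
The plan is to prove this by identifying subrepresentations of $V^{\oplus n} = V \otimes_k k^n$ with certain subspaces of $k^n$ via the functor $W \mapsto \Hom_G(V, W)$, using the fact that $\End_G(V) = k$ from Proposition~\ref{prop:schur}. Concretely, for any subrepresentation $W \subseteq V \otimes k^n$, set $E = \Hom_G(V, W)$; since $\End_G(V) = k$, the natural evaluation map $V \otimes \Hom_G(V, V\otimes k^n) \to V \otimes k^n$ is an isomorphism (the left side is $V \otimes k^n$ because $\Hom_G(V, V \otimes k^n) = \Hom_G(V,V) \otimes k^n = k^n$), and I want to show that under this identification $W$ corresponds exactly to $V \otimes E$.

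First I would establish the easy direction: if $E \subseteq k^n$ is any subspace, then $V \otimes E$ is visibly a $G$-subrepresentation of $V \otimes k^n$ (the $G$-action is on the first factor only), so these are all subrepresentations. Conversely, suppose $W \subseteq V \otimes k^n$ is a subrepresentation. The containment $V \otimes \Hom_G(V,W) \subseteq W$ always holds via evaluation; I need the reverse. Take $0 \ne w \in W$ and write $w = \sum_{i=1}^m v_i \otimes e_i$ with the $e_i \in k^n$ linearly independent and $v_i \in V$ nonzero; I claim each ``component map'' sending $V$ into $W$ is $G$-equivariant and that $w$ lies in $V \otimes E$ where $E$ is spanned by finitely many images of such maps. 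The cleanest route: decompose $k^n = \bigoplus_j k$ and let $p_j \colon V \otimes k^n \to V$ be the $j$th coordinate projection, which is $G$-equivariant. Then $p_j(W)$ is a subrepresentation of the irreducible $V$, hence is $0$ or $V$. Let $S = \{ j : p_j(W) = V \}$ — this is not quite enough on its own because $W$ need not be a direct sum along the standard basis, so I would instead argue as follows.

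The robust argument: let $E = \Hom_G(V, W) \subseteq \Hom_G(V, V \otimes k^n) = k^n$, so I must show $W = V \otimes E$, equivalently (given $V \otimes E \subseteq W$) that $\dim_k W = \dim_k(V \otimes E)$ in the appropriate sense — but $V$ is infinite-dimensional, so dimension-counting must be replaced by a direct argument. Instead: pick a $G$-stable complement or work one irreducible constituent at a time. By dévissage on the length of $W$: $W$ has a simple subrepresentation $W_0$, and since $W_0 \subseteq V \otimes k^n$ with $V$ irreducible, $\Hom_G(V, W_0) \ne 0$; choosing $0 \ne \phi \in \Hom_G(V,W_0) \subseteq k^n$, the map $\phi \colon V \to V \otimes k^n$, $v \mapsto v \otimes \phi$, has image $V \otimes k\phi \cong V$ contained in $W$, and by Schur it equals $W_0$. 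Now pass to $W/(V\otimes k\phi) \subseteq (V \otimes k^n)/(V \otimes k\phi) = V \otimes (k^n/k\phi)$ and induct on length, concluding that $W$ is built from pieces of the form $V \otimes (\text{line})$; collecting these lines gives $W = V \otimes E$ with $E = \Hom_G(V,W)$.

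The main obstacle is making the inductive step rigorous: I must check that the preimage in $V \otimes k^n$ of a subrepresentation of the form $V \otimes E'$ inside the quotient $V \otimes (k^n/k\phi)$ is again of the form $V \otimes E$ for the corresponding $E \subseteq k^n$ (namely the preimage of $E'$ under $k^n \to k^n/k\phi$). This amounts to knowing that the extension $0 \to V \otimes k\phi \to V \otimes E \to V \otimes E' \to 0$ has no ``extra'' subrepresentations beyond those pulled back, which follows because $V \otimes (-)$ is exact and an isomorphism on $\Hom_G(V, -)$-groups: any subrepresentation $W$ of $V \otimes E$ containing $V \otimes k\phi$ corresponds to its image $\ol{W} \subseteq V \otimes E'$, and by the inductive hypothesis $\ol{W} = V \otimes \ol{E}$, whose preimage is $V \otimes (\text{preimage of } \ol E)$; since $W$ contains $V \otimes k\phi = $ the kernel, $W$ equals that full preimage. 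The base case (length one) is exactly Schur's lemma as just used. I would also remark that finite length of $W$ is automatic since $V \otimes k^n$ has length $n$ (each constituent being $\cong V$).
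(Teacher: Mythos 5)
Your argument is correct, but it takes a genuinely different route from the paper's. The paper's proof is a two-line appeal to the structure theory of semisimple modules: $V^{\oplus n}$ is semisimple and isotypic of type $V$, hence so is any subrepresentation $W$; therefore $W$ is a sum of images of $G$-maps $V \to V^{\oplus n}$, and by Proposition~\ref{prop:schur} each such map is $x \mapsto (\alpha_1 x, \ldots, \alpha_n x)$, whose image is $V \otimes k\alpha$. You instead avoid quoting the fact that submodules of isotypic semisimple modules are again isotypic semisimple, and replace it with an elementary d\'evissage: find a simple subrepresentation $W_0 \subseteq W$, identify it as $V \otimes k\phi$ via a nonzero element of $\Hom_G(V,W_0) \subseteq \Hom_G(V, V\otimes k^n) = k^n$ (this is where Schur enters), pass to the quotient $V \otimes (k^n/k\phi)$, and invoke the correspondence theorem, noting that the preimage of $V \otimes \ol{E}$ is $V \otimes (\text{preimage of } \ol{E})$ because $W$ contains the kernel $V \otimes k\phi$. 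Both proofs rest on the same key input ($\End_G(V)=k$); yours is longer but more self-contained, while the paper's is shorter at the cost of citing a standard structural fact. Two small points you should tighten if writing this up: (i) the claim that a simple $W_0 \subseteq V^{\oplus n}$ satisfies $\Hom_G(V,W_0) \ne 0$ deserves the one-line justification that some coordinate projection $W_0 \to V$ is nonzero, hence an isomorphism of simples; and (ii) the first two "routes" you sketch (coordinate projections $p_j$, dimension counting) are abandoned mid-paragraph and should simply be deleted, since only the final inductive argument is the proof.
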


\begin{proof}
Let $W$ be a subrepresentation of $V^{\oplus n}$. Since $V^{\oplus n}$ is semi-simple and isotypic of type $V$, the same is true of $W$. It follows that $W$ is the sum of images of maps $V \to V^{\oplus n}$, and any such map has the form $x \mapsto (\alpha_1 x, \ldots \alpha_n x)$ by the proposition. The result thus follows.
\end{proof}

\subsection{The classification}

We are now ready to classify irreducible multi-quadratic spaces.

\begin{theorem} \label{thm:mqclass}
Let $(V, \omega_1, \ldots, \omega_r)$ be an $r$-quadratic space of infinite dimension, let $\Omega$ be the span of the $\omega_i$'s, and let $\eta_1, \ldots, \eta_s$ be a basis for $\Omega$. Then $V$ is irreducible if and only if $(V, \eta_1, \ldots, \eta_s)$ is a universal homogeneous $s$-quadratic space.
\end{theorem}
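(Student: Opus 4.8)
The plan is to convert ``$V$ is irreducible'' into the vanishing of the space $\Delta$ of Theorem~\ref{thm:mqchar}, applied to a basis $\eta_1,\ldots,\eta_s$ of $\Omega$, and then to invoke Theorem~\ref{thm:mqchar} itself; the one nontrivial ingredient is the tensor form of Schur's lemma, Corollary~\ref{cor:schur2}.

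First I would note that a linear automorphism of $V$ preserves each $\omega_i$ if and only if it preserves every form in $\Omega=\mathrm{span}(\omega_i)$, hence if and only if it preserves each $\eta_j$; so $\Aut(V,\omega_1,\ldots,\omega_r)=\Aut(V,\eta_1,\ldots,\eta_s)=:G$ and the word ``irreducible'' is unambiguous. This already gives the backward implication: if $(V,\eta_1,\ldots,\eta_s)$ is universal homogeneous then, as each $\eta_j$ is a $(2)$-form (so $\vert\lambda_j\vert=2\ge 2$), Proposition~\ref{prop:uh-irred} says $V$ is an irreducible $G$-representation. (In the degenerate case $\Omega=0$ all $\omega_i$ vanish, $s=0$, $G=\GL(V)$ acts irreducibly on the countable-dimensional $V$, and $(V)$ is trivially a universal homogeneous $0$-quadratic space; so assume $s\ge 1$ henceforth.)

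For the forward implication, assume $V$ is irreducible and let $\Delta:=\Delta(\eta_1,\ldots,\eta_s)=\{(x_1,\ldots,x_s)\in V^{\oplus s} : \sum_j (\eta_j)_{x_j}=0\}$. Step one: $\Delta$ is a $G$-subrepresentation of $V^{\oplus s}=V\otimes k^s$ (with $G$ acting diagonally), since for $g\in G$, $(x_j)\in\Delta$, and any $y\in V$ we have $\sum_j \eta_j(gx_j,y)=\sum_j \eta_j(x_j,g^{-1}y)=0$. Step two: as $V$ is irreducible of countable dimension, Corollary~\ref{cor:schur2} gives $\Delta=V\otimes E$ for some subspace $E\subseteq k^s$. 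Step three: $E=0$, because a nonzero $(e_1,\ldots,e_s)\in E$ would put $(e_1v,\ldots,e_sv)$ in $\Delta$ for every $v\in V$, i.e.\ $(\sum_j e_j\eta_j)(v,y)=0$ for all $v,y\in V$, i.e.\ $\sum_j e_j\eta_j=0$, contradicting the linear independence of $\eta_1,\ldots,\eta_s$. Hence $\Delta=0$, and since $V$ is infinite-dimensional, Theorem~\ref{thm:mqchar} shows $(V,\eta_1,\ldots,\eta_s)$ is universal homogeneous.

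I do not expect a genuine obstacle here; the two points needing care are (i) that one must use a \emph{basis} $\eta_1,\ldots,\eta_s$ of $\Omega$, not the possibly dependent $\omega_i$ --- this is exactly what powers Step three and explains why the statement is phrased via $\Omega$ --- and (ii) the $G$-stability of $\Delta$ in Step one, the only place where preservation of the forms is used. Uncountability of $k$ enters solely through Corollary~\ref{cor:schur2}.
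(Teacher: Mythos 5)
Your proposal is correct and follows essentially the same route as the paper: both directions hinge on Proposition~\ref{prop:uh-irred}, Theorem~\ref{thm:mqchar}, and Corollary~\ref{cor:schur2} applied to $\Delta(\eta_1,\ldots,\eta_s)\subseteq V^{\oplus s}$. The only cosmetic difference is that you use the full conclusion $\Delta=V\otimes E$ to kill $\sum_j e_j\eta_j$ directly, whereas the paper extracts a single element $(\alpha_1x,\ldots,\alpha_sx)$ and then notes that $\ker(\sum_i\alpha_i\eta_i)$ is a nonzero $G$-subrepresentation, hence all of $V$.
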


\begin{proof}
Let $G$ be the automorphism group of $(V, \omega_{\bullet})$; note that this is also the automorphism group of $(V, \eta_{\bullet})$. If the latter is universal homogeneous then we have already seen that $V$ is an irreducible representation of $G$ (Proposition~\ref{prop:uh-irred}).

Now suppose that $V$ is an irreducible representation of $G$. Let $\Delta=\Delta(\eta_1, \ldots, \eta_s)$. We claim $\Delta=0$, which will show that $(V, \eta_{\bullet})$ is universal homogeneous (Theorem~\ref{thm:mqchar}). Suppose $\Delta \ne 0$. Since $\Delta$ is a $G$-subrepresentation of $V^s$ and $V$ is irreducible, it follows that $\Delta$ contains a non-zero element of the form $(\alpha_1 x, \ldots, \alpha_s x)$, where $\alpha_1, \ldots, \alpha_s \in k$ and $x \in V$ (Corollary~\ref{cor:schur2}). Let $\eta'=\sum_{i=1}^s \alpha_i \eta_i$. We have
\begin{displaymath}
0 = \sum_{i=1}^s (\eta_i)_{\alpha_i x} = \eta'_x,
\end{displaymath}
i.e., $x$ belongs to $\ker(\eta')$. We thus see that $\ker(\eta')$ is a non-zero $G$-subrepresentation of $V$, and therefore all of $V$ by irreducibility. But this means $\eta'=0$, contradicting the linear independence of the $\eta_i$'s. This completes the proof.
\end{proof}

We now deduce some consequences of the theorem. Define $\Theta(V)$ or $\Theta(\omega_1, \ldots, \omega_r)$ to be the set of points $\alpha \in k^r$ such that $\sum_{i=1}^r \alpha_i \omega_i=0$.

\begin{corollary} \label{cor:mqclass-1}
Let $V$ and $V'$ be infinite dimensional irreducible $r$-quadratic spaces. Then $V$ and $V'$ are isomorphic if and only if $\Theta(V)=\Theta(V')$.
\end{corollary}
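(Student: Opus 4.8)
The plan is to reduce the statement to the uniqueness of universal homogeneous multi-quadratic spaces (Proposition~\ref{prop:fraisse}) by way of Theorem~\ref{thm:mqclass}. The key observation is that $\Theta(V)$ is precisely the kernel of the linear surjection $k^r \to \Omega$ sending $e_i$ to $\omega_i$; thus $\Theta(V)$ records exactly the isomorphism type of the span $\Omega$ together with its ordered spanning tuple $(\omega_1, \dots, \omega_r)$. In particular $\dim \Omega = r - \dim \Theta(V)$, so $\Theta(V) = \Theta(V')$ forces $\dim \Omega = \dim \Omega'$.

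The ``only if'' direction is easy: an isomorphism $g \colon V \to V'$ of $r$-quadratic spaces satisfies $g^\ast \omega'_i = \omega_i$ for all $i$, hence $g^\ast\bigl(\sum_i \alpha_i \omega'_i\bigr) = \sum_i \alpha_i \omega_i$ for every $\alpha \in k^r$. Since $g^\ast$ is injective on the space of bilinear forms, $\sum_i \alpha_i \omega_i = 0$ if and only if $\sum_i \alpha_i \omega'_i = 0$, which says exactly $\Theta(V) = \Theta(V')$.

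For the ``if'' direction, assume $\Theta := \Theta(V) = \Theta(V')$. The two isomorphisms $k^r/\Theta \cong \Omega$ and $k^r/\Theta \cong \Omega'$ induced by $e_i \mapsto \omega_i$ and $e_i \mapsto \omega'_i$ compose to a linear isomorphism $\phi \colon \Omega \to \Omega'$ with $\phi(\omega_i) = \omega'_i$ for all $i$ (well defined precisely because the two kernels agree). Choose a basis $\eta_1, \dots, \eta_s$ of $\Omega$ and set $\eta'_j := \phi(\eta_j)$, a basis of $\Omega'$. Since $V$ and $V'$ are irreducible, Theorem~\ref{thm:mqclass} shows that $(V, \eta_1, \dots, \eta_s)$ and $(V', \eta'_1, \dots, \eta'_s)$ are both universal homogeneous $s$-quadratic spaces, so Proposition~\ref{prop:fraisse} provides an invertible linear map $g \colon V \to V'$ with $g^\ast \eta'_j = \eta_j$ for all $j$. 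Writing $\omega_i = \sum_j c_{ij}\eta_j$, we get $\omega'_i = \phi(\omega_i) = \sum_j c_{ij}\eta'_j$, hence $g^\ast \omega'_i = \sum_j c_{ij} g^\ast\eta'_j = \sum_j c_{ij}\eta_j = \omega_i$; thus $g$ is the desired isomorphism of $r$-quadratic spaces.

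I do not expect a genuine obstacle. The only points requiring care are that $\phi$ is well defined — which is exactly the hypothesis $\Theta(V) = \Theta(V')$ — and that an isomorphism matching a chosen basis of $\Omega$ automatically matches every element of $\Omega$, which is routine linearity. The substantive inputs, namely the irreducibility criterion of Theorem~\ref{thm:mqclass} and the uniqueness of the Fra\"iss\'e limit in Proposition~\ref{prop:fraisse}, are already in hand.
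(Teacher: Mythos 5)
Your proof is correct and follows essentially the same route as the paper: both directions reduce to choosing compatible bases $\eta_\bullet$, $\eta'_\bullet$ of the spans $\Omega$, $\Omega'$ (the paper phrases this as arranging $a_{i,j}=a'_{i,j}$, which is exactly your map $\phi$ induced by the equality of kernels $\Theta(V)=\Theta(V')$), then invoking Theorem~\ref{thm:mqclass} and the uniqueness of the universal homogeneous space. Your write-up just makes the well-definedness of the change of basis slightly more explicit than the paper does.
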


\begin{proof}
Clearly, if $V$ and $V'$ are isomorphic then $\Theta(V)=\Theta(V')$. Now suppose $\Theta(V)=\Theta(V')$. Let $\omega_1, \ldots, \omega_r$ be the forms on $V$, let $\Omega$ be their span, let $\eta_1, \ldots, \eta_s$ be a basis of $\Omega$, and write $\omega_i=\sum_{j=1}^s a_{i,j} \eta_j$. Similarly define $\omega'_{\bullet}$, $\Omega'$, $\eta'_{\bullet}$, and $a'_{\bullet,\bullet}$. Since $\Theta(V)=\Theta(V')$, we have $\dim(\Omega)=\dim(\Omega')$; moreover, we can choose the $\eta$ and $\eta'$ such that $a_{i,j}=a'_{i,j}$. Since $(V, \eta_{\bullet})$ and $(V', \eta'_{\bullet})$ are both universal homogeneous, there is an isomorphism $\phi \colon V \to V'$ such that $\phi^*(\eta'_i)=\eta_i$. Clearly then $\phi^*(\omega'_i)=\omega_i$, an so $\phi$ is an isomorphism of $r$-quadratic spaces.
\end{proof}

\begin{corollary}
An infinite dimensional irreducible multi-quadratic space $V$ is linearly $\omega$-categorical.
\end{corollary}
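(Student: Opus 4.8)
The plan is to derive linear $\omega$-categoricity from the structure theory established in Theorem~\ref{thm:mqclass} and Corollary~\ref{cor:mqclass-1}. The key point is that an infinite dimensional irreducible $r$-quadratic space $V$ is determined up to isomorphism by a single discrete invariant, namely the subspace $\Theta(V) \subseteq k^r$ (equivalently, by the span $\Omega$ of the forms together with the matrix expressing the $\omega_i$ in a basis of $\Omega$). So it suffices to show that from $\Th(V)$ one can (i) detect that $\dim(V) = \aleph_0$, (ii) recover $\Theta(V)$, and (iii) detect that $(V,\eta_\bullet)$ is universal homogeneous, i.e.\ that $\Delta(\eta_1,\dots,\eta_s) = 0$; then any $W$ with $\Th(W) = \Th(V)$ has the same data and hence is isomorphic to $V$ by Corollary~\ref{cor:mqclass-1}.

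First I would note that infinite dimensionality is first-order axiomatizable: the sentences $\exists x_1,\dots,x_n\,(\theta_n(x_1,\dots,x_n))$ belong to $\Th(V)$ for all $n$ exactly when $\dim(V) \ge n$, as in the examples in \S\ref{ss:logic}. Next, for each fixed $\alpha \in k^r$ the condition $\sum_i \alpha_i \omega_i = 0$ is expressed by the sentence $\forall x,y\,(\sum_i \alpha_i \omega_i(x,y) = 0)$, which uses only the constant symbols for the $\alpha_i$ and the function symbols $\omega_i$; hence membership of $\alpha$ in $\Theta(V)$ is recorded by $\Th(V)$, so $\Theta(V)$ is recovered. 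Finally, the condition $\Delta(\eta_1,\dots,\eta_s) = 0$ must also be captured. By Theorem~\ref{thm:mqchar} this is equivalent to $(V,\eta_\bullet)$ being universal homogeneous, and by the proof of Theorem~\ref{thm:mqchar} (via the remark after it, in matrix language) it is equivalent to the collective strength of $\eta_1,\dots,\eta_s$ being infinite; but more directly, for an infinite dimensional $V$, Theorem~\ref{thm:mqclass} tells us $\Delta = 0$ automatically once $V$ is irreducible — so in fact we only need to know that $W$, with $\Th(W) = \Th(V)$, is again infinite dimensional and irreducible. Infinite dimensionality of $W$ we just established; and irreducibility follows because, by Theorem~\ref{thm:mqclass}, an infinite dimensional $r$-quadratic space is irreducible iff the span of its forms, with a chosen basis, is universal homogeneous, iff $\Delta = 0$ — and $\Delta = 0$ is expressible as the (infinite) conjunction over all $n$ of the sentences asserting that for every $s$-tuple $x_1,\dots,x_s$ the vanishing $\sum_i (\eta_i)_{x_i} = 0$ forces the $\eta_i(x_j,-)$ relations among a spanning set to hold, i.e.\ forces $x_\bullet$ into a prescribed degenerate locus.

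Cleaner still: since $\Delta = 0$ is equivalent to ``the linear functionals $\{(\eta_i)_{x_j}\}$ are independent whenever the $x_j$ are independent,'' which is a first-order scheme, and since for infinite dimensional $V$ irreducibility is equivalent to $\Delta = 0$ by Theorem~\ref{thm:mqclass}, the class of infinite dimensional irreducible $r$-quadratic spaces isomorphic to a given $V$ is cut out inside all $r$-quadratic spaces of dimension $\le \aleph_0$ by: the infinite-dimensionality scheme, the $\Delta = 0$ scheme (applied to a basis $\eta_\bullet$ of the span — note the dimension $s$ of the span and the change-of-basis matrix are themselves detectable from $\Th(V)$ via which linear combinations of the $\omega_i$ vanish), and the sentences recording $\Theta(V)$. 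Thus any $W$ with $\Th(W) = \Th(V)$ is infinite dimensional, irreducible, and has $\Theta(W) = \Theta(V)$, hence $W \cong V$ by Corollary~\ref{cor:mqclass-1}.

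The main obstacle is bookkeeping rather than conceptual: one must check carefully that the integer $s = \dim(\Omega)$ and the coefficients $a_{i,j}$ expressing $\omega_i$ in terms of a basis of $\Omega$ are genuinely recoverable from $\Th(V)$ — which they are, since $\Theta(V)$ determines $s$ (as $r - \dim\Theta(V)$) and determines the linear relations, and one can pin down a specific basis $\eta_\bullet$ of $\Omega$ by a definable choice — and that the $\Delta = 0$ condition really is a first-order scheme in the language with the $\omega_i$ (it is, because $(\eta_i)_{x_i}(y) = \eta_i(x_i,y)$ is term-definable and linear dependence of finitely many functionals is $\theta_n$-expressible). None of this requires new ideas beyond \S\ref{ss:logic} and the two preceding results.
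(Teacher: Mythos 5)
Your proposal is correct and follows essentially the same route as the paper: read off infinite dimensionality and $\Theta(V)$ from the theory, observe that the vanishing of $\Delta(\eta_\bullet)$ for a basis of the span is a first-order condition (so it transfers to any $W$ with the same theory, making $W$ irreducible by Theorems~\ref{thm:mqchar} and~\ref{thm:mqclass}), and conclude $W \cong V$ from Corollary~\ref{cor:mqclass-1}. The only difference is that the paper states this in a few lines while you spend more space verifying the first-order expressibility of $\Delta = 0$ and the recoverability of $s$ and the coefficients $a_{i,j}$ — details the paper treats as immediate.
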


\begin{proof}
Suppose $V'$ is a multi-quadratic space with $\Th(V)=\Th(V')$. Then $V'$ is infinite dimensional and $\Theta(V')=\Theta(V)$. Using notation as in the proof of the previous corollary, we again have $\dim(\Omega)=\dim(\Omega')$, and can choose $\eta_{\bullet}$ and $\eta'_{\bullet}$ such that $a=a'$. We have $\Delta(\eta'_{\bullet})=\Delta(\eta_{\bullet})$ since this can be read off from the theory. The space $\Delta(\eta_{\bullet})$ vanishes by Theorems~\ref{thm:mqclass} and~\ref{thm:mqchar}, and so $\Delta(\eta'_{\bullet})$ vanishes as well; Theorem~\ref{thm:mqchar} thus shows that $(V', \eta'_{\bullet})$ is universal homogeneous, and thus irreducible. Hence $(V', \omega'_{\bullet})$ is irreducible too. Since $\Theta(V)=\Theta(V')$, we have $V \cong V'$ by the previous corollary.
\end{proof}

\begin{corollary} \label{cor:mqclass-1}
If $V$ is an irreducible multi-quadratic space then $\Aut(V)$ is linearly oligomorphic.
\end{corollary}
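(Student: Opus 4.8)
The plan is to reduce to the universal homogeneous case and then quote the results already established. Let $(V,\omega_1,\dots,\omega_r)$ be the given space, let $\Omega$ be the span of the $\omega_i$'s, and let $\eta_1,\dots,\eta_s$ be a basis of $\Omega$. The first step is the observation already used in the proof of Theorem~\ref{thm:mqclass}: a linear automorphism of $V$ preserves every $\omega_i$ if and only if it preserves the subspace $\Omega$, hence if and only if it preserves every $\eta_j$. Thus $\Aut(V,\omega_\bullet)=\Aut(V,\eta_\bullet)$, and it suffices to show that $G=\Aut(V,\eta_\bullet)$, acting on $V$, is linearly oligomorphic.

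If $V$ is finite dimensional this is immediate: $\Gr_d(V)$ is then a finite dimensional variety, so one may take $E=V$ in the definition of linear oligomorphy, and the action of $G$ on $V$ is faithful by construction. If $V$ is infinite dimensional, Theorem~\ref{thm:mqclass} says precisely that $(V,\eta_\bullet)$ is the universal homogeneous $s$-quadratic space; note that $(2)$ is a non-empty partition and $2$ is invertible in $k$, so the hypotheses of Theorems~\ref{thm:homo} and~\ref{thm:oligo} are met. Theorem~\ref{thm:oligo} then yields that $G$ is linearly oligomorphic with respect to its action on $V$, and since this action is faithful, $G$ is a linearly oligomorphic group. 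Combining the two steps proves the corollary.

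I expect essentially no obstacle: the content is entirely contained in Theorem~\ref{thm:mqclass} (which identifies infinite dimensional irreducible multi-quadratic spaces with universal homogeneous ones after replacing the $\omega_i$'s by a basis of their span) together with Theorem~\ref{thm:oligo} (linear oligomorphy of the universal homogeneous space). The only things requiring verification are the bookkeeping identity $\Aut(V,\omega_\bullet)=\Aut(V,\eta_\bullet)$ and the routine check that the quoted theorems apply, so the proof will be short.
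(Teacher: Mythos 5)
Your proof is correct and follows essentially the same route as the paper: identify $\Aut(V,\omega_\bullet)$ with $\Aut(V,\eta_\bullet)$ for a basis $\eta_\bullet$ of the span of the forms, invoke Theorem~\ref{thm:mqclass} to see that $(V,\eta_\bullet)$ is the universal homogeneous space, and conclude by Theorem~\ref{thm:oligo}. Your explicit handling of the finite-dimensional case is a harmless addition (the paper implicitly treats the infinite-dimensional situation, which is where Theorem~\ref{thm:mqclass} applies); just note that the intermediate phrase ``preserves the subspace $\Omega$'' should be read as ``fixes each element of $\Omega$,'' which is what your argument actually uses.
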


\begin{proof}
Theorem~\ref{thm:mqclass} shows that the automorphism group of $V$ coincides with the automorphism group of a universal homogeneous multi-quadratic structure on $V$, which is known to be linearly oligomorphic by Theorem~\ref{thm:oligo}.
\end{proof}

\section{Type I spaces} \label{s:type1}

\subsection{Set-up}

In \S \ref{s:type1}--\ref{s:III}, we analyze the seven families of biquadratic spaces appearing in the main theorem. In each case, we give a first-order definition of the spaces, which we prove uniquely characterizes them, and analyze the automorphism group. The most interesting cases are certainly Type~IIb (\S \ref{ss:IIb}) and Type~III (\S \ref{s:III}), though it is necessary to treat all cases.

We now introduce some notation and terminology that we use throughout this discussion:
\begin{itemize}
\item We write $\ker(\omega)$ for the nullspace of a quadratic form $\omega$.
\item As defined above, for a quadratic form $\omega$ on $V$ and $x \in V$ we write $\omega_x$ for $\omega(x, -)$.
\end{itemize}
In what follows, $(V, \omega, \omega')$ is a biquadratic space.
\begin{itemize}
\item We put $\ker(V)=\ker(\omega) \cap \ker(\omega')$.
\item We define the \defn{total orthogonal complement} of a subspace $W$ of $V$ to be the set of all vectors orthogonal to $W$ under both $\omega$ and $\omega'$. We note that the total orthogonal complement of $V$ itself is $\ker(V)$.
\item For a point $p=[p_0:p_1]$ in $\bP^1$, we let $\omega(p)=p_1 \omega+p_0 \omega'$. The form $\omega(p)$ is only defined up to scalar multiples, but, for example, $\ker(\omega(p))$ is well-defined. We note that $\omega(0)=\omega$ and $\omega(\infty)=\omega'$ (up to scalars).
\item Let
\begin{displaymath}
g = \begin{pmatrix} a & b \\ c & d \end{pmatrix}
\end{displaymath}
be an element of $\GL(2)$. We define $V^g$ to be the biquadratic space $(V, \eta, \eta')$ where
\begin{displaymath}
\eta = a \omega + b \omega', \qquad \eta' = c \omega + d \omega'.
\end{displaymath}
We refer to $V^g$ as the \defn{twist} of $V$ by $g$.
\item If $W$ is a second biquadratic space, we define $V \oplus W$ to be the biquadratic space equipped with the forms extending those form $V$ and $W$, and for which $V$ and $W$ are orthogonal.
\end{itemize}

\subsection{Type Ia} \label{ss:Ia}

The following definition introduces this class of spaces.

\begin{definition}
A biquadratic space $V$ is \defn{Type Ia} if there are points $p \ne q$ in $\bP^1$ such that the nullspaces $W$ and $W'$ of $\omega(p)$ and $\omega(q)$ are both infinite dimensional and $V=W \oplus W'$. The \defn{parameter} of $V$ is the 2-element subset $\{p,q\}$ of $\bP^1$.
\end{definition}

We show below that the parameter is well-defined. This is a first-order definition. By this, we mean that if $V$ is a biquadratic space, one can determine from $\Th(V)$ if $V$ satisfies the definition; one can also read off the parameter of $V$ from $\Th(V)$.

%
%

\begin{construction} \label{con:Ia}
Let $V_1$ be an infinite dimensional biquadratic space where $\omega$ is non-degenerate and $\omega'=0$, and let $V_2$ be defined similarly but with the roles of $\omega$ and $\omega'$ reversed. Then $V=V_1 \oplus V_2$ is a Type~Ia space with parameters $\{0, \infty\}$. Twisting by $\GL(2)$ gives examples realizing any parameters.
\end{construction}

\begin{proposition}
Two Type~Ia spaces are isomorphic if and only if their parameters coincide. In particular, the parameter of a Type~Ia space is well-defined.
\end{proposition}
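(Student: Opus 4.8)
The plan is to recover the parameter $\{p,q\}$ intrinsically from the biquadratic space $V$ — this yields both the well-definedness of the parameter and the ``only if'' direction at once — and then to show that the parameter pins down the isomorphism class for the ``if'' direction.

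The crux is a structural observation about the decomposition. Suppose $V=W\oplus W'$ witnesses that $V$ is Type~Ia with parameter $\{p,q\}$, so $W=\ker(\omega(p))$ and $W'=\ker(\omega(q))$ are infinite dimensional. First I would show that this direct sum is automatically orthogonal for \emph{both} forms: for $w\in W$ and $w'\in W'$ we have $\omega(p)(w,w')=0$ (since $w\in\ker(\omega(p))$) and $\omega(q)(w,w')=0$ (by symmetry, since $w'\in\ker(\omega(q))$), and since $p\ne q$ the forms $\omega(p),\omega(q)$ are independent and hence span the pencil $\langle\omega,\omega'\rangle$, so $\omega(w,w')=\omega'(w,w')=0$. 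Using orthogonality one then computes $\ker(\omega(q)|_W)=W\cap\ker(\omega(q))=W\cap W'=0$, so $\omega(q)|_W$ is non-degenerate; and since $\omega(p)|_W=0$, for every $r\ne p$ the restriction $\omega(r)|_W$ is a nonzero scalar multiple of $\omega(q)|_W$, hence non-degenerate. Symmetrically $\omega(r)|_{W'}$ is non-degenerate for $r\ne q$ while $\omega(q)|_{W'}=0$. Orthogonality also gives $\ker(\omega(r))=\ker(\omega(r)|_W)\oplus\ker(\omega(r)|_{W'})$ for every $r$, which is therefore $W$ if $r=p$, is $W'$ if $r=q$, and is $0$ otherwise. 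Since $W,W'\ne 0$, this proves
\[ \{p,q\}=\{\, r\in\bP^1 \mid \ker(\omega(r))\ne 0 \,\}, \]
a set depending only on $V$. Hence the parameter is well-defined; and since any isomorphism of biquadratic spaces carries $\omega(r)$ to the corresponding form up to scalars, it carries $\ker(\omega(r))$ onto $\ker(\widetilde\omega(r))$, so isomorphic Type~Ia spaces have equal parameters, which is the ``only if'' direction.

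For the converse, suppose $V$ and $\widetilde V$ are Type~Ia with the same parameter $\{p,q\}$. Since $\GL(2)$ acts transitively on ordered pairs of distinct points of $\bP^1$, there is $g\in\GL(2)$ whose induced Möbius action carries $\{0,\infty\}$ to $\{p,q\}$; as twisting by $g$ is an autoequivalence of the category of biquadratic spaces transforming the parameter by this Möbius map, $V\cong\widetilde V$ iff $V^g\cong\widetilde V^g$, so we may assume $\{p,q\}=\{0,\infty\}$. Then $V=W_1\oplus W_2$ with $W_1=\ker(\omega)$, $W_2=\ker(\omega')$, and by the first paragraph the sum is orthogonal, $\omega|_{W_1}=0$ with $\omega'|_{W_1}$ non-degenerate, $\omega'|_{W_2}=0$ with $\omega|_{W_2}$ non-degenerate, and both $W_i$ have countably infinite dimension. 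A countably-infinite-dimensional non-degenerate quadratic space over $k$ is unique up to isomorphism (Theorem~\ref{thm:mqchar} with $r=1$ together with Proposition~\ref{prop:fraisse}, i.e.\ Witt's theorem for ind-objects). Writing $\widetilde V=\widetilde W_1\oplus\widetilde W_2$ analogously, I would pick isomorphisms $\phi_i\colon W_i\to\widetilde W_i$ respecting the non-degenerate form each carries; then $\phi_1\oplus\phi_2\colon V\to\widetilde V$ respects both forms — the cross terms vanish because $\phi_1(W_1)\subseteq\ker(\widetilde\omega)$ and $\phi_2(W_2)\subseteq\ker(\widetilde\omega')$ — so $V\cong\widetilde V$.

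The only real content here is the structural observation in the second paragraph: the point is that orthogonality of the decomposition $V=W\oplus W'$ is \emph{forced}, not assumed, and this is exactly what makes the computation of $\ker(\omega(r))$ for all $r\in\bP^1$ go through and lets one read the parameter off invariantly. Everything else is routine bookkeeping with twists and the uniqueness of the countable-dimensional non-degenerate quadratic space.
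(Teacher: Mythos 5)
Your proposal is correct and follows essentially the same route as the paper: twist by $\GL(2)$ to reduce to parameter $\{0,\infty\}$, observe that the decomposition $V=W\oplus W'$ is automatically orthogonal for both forms, identify each summand with a non-degenerate (respectively zero) quadratic space of countably infinite dimension, and read the parameter off from the set of $r\in\bP^1$ with $\ker(\omega(r))\ne 0$. You merely spell out the orthogonality and the kernel computation that the paper leaves as a remark, so no further comment is needed.
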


\begin{proof}
Let $V$ be a Type~Ia space; twisting by $\GL(2)$, we assume the parameter is $\{0,\infty\}$. We claim that $V$ is isomorphic to the space from Construction~\ref{con:Ia}. Let $W$ and $W'$ be the nullspaces of $\omega$ and $\omega'$. By assumption, both $W$ and $W'$ are infinite dimensional, and $V=W \oplus W'$. Note that this is an orthogonal decomposition with respect to both $\omega$ and $\omega'$. We have $V_1 \cong W'$ and $V_2 \cong W$ as bi-quadratic spaces, where the $V_i$'s are as in Construction~\ref{con:Ia}, and so the claim follows. It is now clear that $\omega(p)$ has non-zero nullspace only if $p=0$ or $p=\infty$, which shows that the parameter of $V$ is well-defined. Of course, isomorphic spaces have the same parameter.
\end{proof}

\begin{corollary}
A Type~Ia space is linearly $\omega$-categorical.
\end{corollary}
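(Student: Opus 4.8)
The plan is to reduce at once to the isomorphism classification just proved, so the argument will be very short. By definition, $V$ is linearly $\omega$-categorical if every biquadratic space $V'$ with $\dim(V') \le \aleph_0$ and $\Th(V') = \Th(V)$ is isomorphic to $V$. I would therefore fix such a $V'$ and show that it is again a Type~Ia space with the same parameter $\{p,q\}$ as $V$; the preceding proposition then gives $V' \cong V$ immediately.

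The only thing that really needs to be checked is the assertion, made just after the definition of Type~Ia, that ``Type~Ia with parameter $\{p,q\}$'' is expressible by a set of first-order sentences, and I would spell this out. For $p=[p_0:p_1]\in\bP^1$ the coordinates $p_0,p_1$ are elements of $k$, so the formula
\[ \psi_p(x) \colon \forall y\,(p_1\,\omega(x,y) + p_0\,\omega'(x,y) = 0) \]
defines the subspace $\ker(\omega(p))$, exactly as in the $\psi$-example of \S \ref{ss:logic}. Using the linear-independence formulas $\theta_n$ of \S \ref{ss:logic}, the sentence $\exists x_1,\ldots,x_n\,(\psi_p(x_1)\land\cdots\land\psi_p(x_n)\land\theta_n(x_1,\ldots,x_n))$ asserts $\dim\ker(\omega(p))\ge n$, so the family of these sentences over all $n$ records that $\ker(\omega(p))$ is infinite dimensional (and likewise for $q$); the sentence $\forall x\,(\psi_p(x)\land\psi_q(x)\implies x=0)$ records $\ker(\omega(p))\cap\ker(\omega(q))=0$; and $\forall x\,\exists w\,\exists w'\,(\psi_p(w)\land\psi_q(w')\land x=w+w')$ records $\ker(\omega(p))+\ker(\omega(q))=V$. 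Together these sentences encode exactly that $V$ is Type~Ia with $p,q$ as witnessing points. All of them lie in $\Th(V)$, hence in $\Th(V')$, so $V'$ is a Type~Ia space witnessed by $p$ and $q$; since the parameter of a Type~Ia space is well-defined (previous proposition), the parameter of $V'$ is $\{p,q\}$. Applying that proposition to the two Type~Ia spaces $V$ and $V'$ with common parameter now yields $V\cong V'$, as required.

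I do not expect any genuine obstacle here: the substance all sits in the isomorphism classification of Type~Ia spaces, which is already in hand, and the remainder is bookkeeping. The one point worth flagging is that we never claim ``Type~Ia with parameter $\{p,q\}$'' is \emph{uniformly} axiomatizable as $\{p,q\}$ ranges over the (uncountable) family of two-element subsets of $\bP^1$; we only use that, for the single fixed space $V$ at hand, the sentences above --- which involve only the specific field elements $p_0,p_1,q_0,q_1$ --- already belong to $\Th(V)$ and hence pin down the parameter of any $V'$ with the same theory.
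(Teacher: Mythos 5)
Your proposal is correct and follows the same route as the paper: read off the Type~Ia condition and the parameter from $\Th(V)$, then invoke the isomorphism classification of Type~Ia spaces. The paper's proof is just a terser version of yours; the explicit first-order sentences you write down (using the constant symbols $p_0,p_1,q_0,q_1$ from $k$) are exactly the justification the paper leaves implicit in its remark that the Type~Ia definition is first-order.
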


\begin{proof}
Let $V$ be a Type~Ia space and let $V'$ be a biquadratic space such that $\Th(V)=\Th(V')$. Since the Type~Ia condition and parameter can be read off from the theory, it follows that $V'$ is Type~Ia with the same parameter as $V$. Thus $V'$ is isomorphic to $V$ by the proposition, which shows that $V$ is linearly $\omega$-categorical.
\end{proof}

The exact argument used in the above corollary applies for the other types of spaces considered below, so we will not repeat it.

\begin{proposition}
Let $V$ be a Type~Ia space, and let $G=\Aut(V)$. Then $G$ is linearly oligomorphic, $V$ is a length two representation of $G$, and $\End_G(V)=k \oplus k$.
\end{proposition}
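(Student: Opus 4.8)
The plan is to reduce everything to the irreducible case already treated in \S\ref{s:multiq}, using the orthogonal decomposition $V = W \oplus W'$ that comes from the Type~Ia structure. After twisting by $\GL(2)$, we may assume the parameter is $\{0,\infty\}$, so that $W = \ker(\omega')$ and $W' = \ker(\omega)$, and $V = W \oplus W'$ is an orthogonal decomposition for both forms. On $W$, the form $\omega'$ vanishes and $\omega$ is non-degenerate; hence $(W,\omega|_W)$ is an infinite dimensional non-degenerate quadratic space, which is universal homogeneous (Witt's theorem, or Theorem~\ref{thm:mqchar} with $r=1$), and likewise $(W',\omega'|_{W'})$. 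By Theorem~\ref{thm:mqclass} (applied with $r=1$, the span $\Omega$ being one-dimensional), each of $W$ and $W'$ is an irreducible representation of its own automorphism group, and by Corollary~\ref{cor:mqclass-1} those automorphism groups are linearly oligomorphic.

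The first key step is to identify $G = \Aut(V)$. Since $W = \ker(\omega')$ and $W' = \ker(\omega)$ are the unique infinite dimensional nullspaces (this is exactly the well-definedness of the parameter, proved above), they are both $G$-stable; thus the restriction maps give a homomorphism $G \to \Aut(W,\omega|_W) \times \Aut(W',\omega'|_{W'})$. Conversely, any pair of automorphisms of the summands extends (by orthogonality) to an automorphism of $V$, so this homomorphism is an isomorphism: $G \cong \Aut(W) \times \Aut(W')$. Now Proposition~\ref{prop:sum-oligo-2} applies directly: a direct sum of linearly oligomorphic representations of $\Aut(W)$ and $\Aut(W')$ is a linearly oligomorphic representation of the product, so $V = W \oplus W'$ is linearly oligomorphic over $G$.

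Next, the length and endomorphism computations. As a representation of $G = \Aut(W) \times \Aut(W')$, $V = W \boxtimes \mathbf{1} \,\oplus\, \mathbf{1} \boxtimes W'$ is a direct sum of two irreducible representations (each $W$, $W'$ is irreducible over its factor, hence over the product), and these two are non-isomorphic, since $W$ is acted on trivially by the second factor while $W'$ is not (and $W' \not\cong 0$). Therefore $V$ has length two with non-isomorphic simple constituents $L_1 = W$, $L_2 = W'$. By Schur's lemma in the form of Proposition~\ref{prop:schur} and Corollary~\ref{cor:schur1}, $\End_G(W) = \End_G(W') = k$ and $\Hom_G(W,W') = \Hom_G(W',W) = 0$, whence $\End_G(V) = \End_G(W) \oplus \End_G(W') = k \oplus k$. (This is consistent with Table~\ref{tab:len2}: semisimple, constituents non-isomorphic.)

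The only place needing a little care — the ``main obstacle,'' though it is mild — is the claim that $G = \Aut(V)$ really is the full product $\Aut(W) \times \Aut(W')$ rather than some proper subgroup: one must confirm that $W$ and $W'$ are each $G$-stable. This follows because $\ker(\omega)$ and $\ker(\omega')$ are defined by the forms (hence preserved by any automorphism), and among them $W$ and $W'$ are pinned down as the infinite dimensional ones; since $p \ne q$, there is no ambiguity. Granting this, every automorphism of $V$ restricts to automorphisms of the two summands, and every such pair glues back, so the identification is exact and the rest is immediate.
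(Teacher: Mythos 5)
Your proposal is correct and follows essentially the same route as the paper: reduce to the model with parameter $\{0,\infty\}$, observe that $W=\ker(\omega')$ and $W'=\ker(\omega)$ are $G$-stable so that $G=\Aut(W)\times\Aut(W')$ with each factor an infinite orthogonal group, and then read off the length, the endomorphism algebra via Schur's lemma, and linear oligomorphy via Proposition~\ref{prop:sum-oligo-2}. The paper's proof is just a terser version of the same argument ("the claims now follow easily"), so your write-up merely supplies the omitted details.
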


\begin{proof}
It suffices to treat the space $V=V_1 \oplus V_2$ from Construction~\ref{con:Ia}. It is clear that $G$ contains $\Aut(V_1) \times \Aut(V_2)$, and each $\Aut(V_i)$ is a copy of the infinite orthogonal group. Since $V_1$ is the nullspace of $\omega'$, it is preserved by $G$; similarly, $V_2$ is preserved by $G$. We thus see $G=\Aut(V_1) \times \Aut(V_2)$. The claims now follow easily.
\end{proof}

\subsection{Type Ib} \label{ss:Ib}

This class is introduced in the following first-order definition.

\begin{definition} \label{defn:Ib}
A biquadratic space $V$ is of \defn{Type~Ib} if there is some $p \in \bP^1$ such that:
\begin{enumerate}
\item The nullspace $W$ of $\omega(p)$ is non-zero.
\item The total orthogonal complement $U$ of $W$ is the universal homogeneous biquadratic space.
\item We have $V=U \oplus W$.
\end{enumerate}
The \defn{parameter} of $V$ is the pair $(d,p)$ where $d=\dim(W)$. We note that $1 \le d \le \infty$.
\end{definition}

We offer one point of clarification here. ``Homogeneous'' is not obviously a first-order condition. However, ``universal homogeneous'' is equivalent to ``universal f-injective'' (Proposition~\ref{prop:fraisse}), which is easily seen to be a first-order condition.

The value $p$ in the above definition is unique. Indeed, suppose there were some $q \ne p$ such that $\omega(q)$ had a non-zero null vector $x$. Then $x$ would be orthogonal to $W$ under both $\omega(p)$ and $\omega(q)$, and thus belong to $U$, and so $\omega(q)$ would have a null vector on $U$. But this contradicts $U$ being the universal homogeneous space.

\begin{construction} \label{con:Ib}
Let $d \in \{1,2,\ldots,\infty\}$. Let $V_1$ be a $d$-dimensional biquadratic space on which $\omega$ has zero nullspace and $\omega'$ is identically zero, and let $V_2$ be the universal homogeneous biquadratic space. Then $V=V_1 \oplus V_2$ is a Type~Ib space with parameter $(d, \infty)$. Twisting by $\GL(2)$ gives examples with any parameter.
\end{construction}

\begin{proposition}
Two~Ib spaces are isomorphic if and only if their parameters coincide. In particular, a Type~Ib space is linearly $\omega$-categorical.
\end{proposition}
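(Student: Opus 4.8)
The plan is to mirror the structure of the Type~Ia analysis. First I would reduce to a normal form: given a Type~Ib space $V$ with parameter $(d,p)$, twist by a suitable element of $\GL(2)$ so that $p=\infty$; since twisting is an isomorphism of the underlying data (it only relabels the pencil of forms) and since the parameter $p$ is the twist of $\infty$, it suffices to show that any Type~Ib space with parameter $(d,\infty)$ is isomorphic to the space $V = V_1 \oplus V_2$ of Construction~\ref{con:Ib}. The previous paragraph of the excerpt already established that $p$ is uniquely determined by $V$; combined with the uniqueness argument below for $d$, this will show the parameter is well-defined, and the isomorphism statement then follows because the parameter is a complete invariant.

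Next I would extract the decomposition. By Definition~\ref{defn:Ib}, with $p=\infty$ the form $\omega(\infty)=\omega'$ has nullspace $W$ with $\dim(W)=d$, the total orthogonal complement $U$ of $W$ is the universal homogeneous biquadratic space, and $V = U \oplus W$. I must check this is an orthogonal decomposition for \emph{both} forms: $U \perp W$ under $\omega$ and $\omega'$ holds by definition of total orthogonal complement, so $V$ is the orthogonal direct sum of $(W,\omega|_W,\omega'|_W)$ and $(U,\omega|_U,\omega'|_U)$. Since $W \subset \ker(\omega')$, the form $\omega'$ vanishes identically on $W$. I then need that $\omega|_W$ has zero nullspace on $W$: a vector $x \in W$ with $\omega(x,W)=0$ would be totally orthogonal to $W$, hence lie in $U \cap W = 0$. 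Thus $(W,\omega|_W,\omega'|_W) \cong V_1$, the $d$-dimensional space with $\omega$ nondegenerate and $\omega'=0$ (here I use that over $k$ algebraically closed a nondegenerate quadratic form in any dimension $\le \aleph_0$ is unique up to isomorphism — for $d=\infty$ this is Witt's theorem for the universal homogeneous $[(2)]$-space, cf.\ the example in \S\ref{ss:fraisse}). And $(U,\omega|_U,\omega'|_U) \cong V_2$ by hypothesis. Hence $V \cong V_1 \oplus V_2$, which is the space of Construction~\ref{con:Ib} with parameter $(d,\infty)$.

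To finish the well-definedness of the parameter I would argue $d$ is intrinsic: $d = \dim \ker(\omega(p))$ where $p$ is the (already-shown-unique) value in the definition, so $d$ depends only on $V$. Then two Type~Ib spaces with the same parameter are each isomorphic to the same Construction~\ref{con:Ib} space, hence to each other; conversely isomorphic spaces visibly share the same $p$ and the same $d$. For the ``in particular'' clause I would invoke, exactly as the excerpt says one may, the argument of the corollary following the Type~Ia proposition: the Type~Ib condition and the parameter $(d,p)$ can be read off from $\Th(V)$ (the null dimension of $\omega(p)$ is first-order, ``universal f-injective'' is first-order by Proposition~\ref{prop:fraisse}), so any $V'$ with $\Th(V')=\Th(V)$ is Type~Ib with the same parameter, hence isomorphic to $V$; thus $V$ is linearly $\omega$-categorical.

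The main obstacle is the unramified-looking but genuinely load-bearing claim that $(W,\omega|_W)$ is a nondegenerate quadratic space isomorphic to the standard $d$-dimensional one — specifically confirming $\ker(\omega|_W)=0$ from the total-orthogonal-complement hypothesis, and, when $d=\infty$, citing the right uniqueness result (Witt/universal homogeneity for a single nondegenerate form). Everything else is bookkeeping about orthogonal direct sums and the observation (already made in the excerpt) that the twist and the null-dimension are first-order and isomorphism-invariant.
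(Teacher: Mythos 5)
Your proposal is correct and follows essentially the same route as the paper: reduce to parameter $(d,\infty)$ by twisting, show $\ker(\omega\vert_W)=0$ because any such vector would lie in the total orthogonal complement $U$ and hence in $U\cap W=0$, identify $W\cong V_1$ and $U\cong V_2$, and conclude $V\cong V_1\oplus V_2$. The extra care you take over the uniqueness of the nondegenerate $d$-dimensional quadratic space and the first-order readability of the parameter matches what the paper leaves implicit.
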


\begin{proof}
It is clear that isomorphic spaces have the same parameter. Let $V$ be a Type~Ib space with parameter $(d, \infty)$, and let $U$ and $W$ be as in Definition~\ref{defn:Ib}. Let $V_1$ and $V_2$ be as in Construction~\ref{con:Ib}. If $x$ belongs to $\ker(\omega \vert_W)$ then $x$ is orthogonal to $W$ under both $\omega$ and $\omega'$, and so $x$ belongs to $U$; since $W \cap U=0$, we find $x=0$. We thus see that $\ker(\omega \vert_W)=0$. It follows that $W$ is isomorphic to $V_1$ as a biquadratic space; of course, $U$ is isomorphic to $V_2$. We thus find that $V$ is isomorphic to $V_1 \oplus V_2$. We therefore see that every Type~Ib space with parameter $(d, \infty)$ is isomorphic to the one from Construction~\ref{con:Ib}. Twisting by $\GL(2)$ gives the general case.
\end{proof}

\begin{proposition}
Let $V$ be a Type~Ib space, and let $G=\Aut(V)$. Then $G$ is linearly oligomorphic, $V$ has length two as a representation of $G$, and $\End_G(V)=k \oplus k$.
\end{proposition}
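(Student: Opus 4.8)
The plan is to follow the template of the Type~Ia case. Twisting by $\GL(2)$ does not change $\Aut(V)$ as a subgroup of $\GL(V)$, so, combining this with the previous proposition, it suffices to treat the model space $V=V_1\oplus V_2$ from Construction~\ref{con:Ib} with parameter $(d,\infty)$; here $\omega$ is nondegenerate and $\omega'=0$ on the $d$-dimensional space $V_1$, and $V_2$ is the universal homogeneous biquadratic space. I would first record the two canonical subspaces attached to $V$: the nullspace $W=\ker(\omega')$ and its total orthogonal complement $U$. Using that the universal homogeneous space has $\Delta=0$ (so that $\omega$ and $\omega'$ are both nondegenerate on $V_2$), together with the orthogonality of the decomposition, a short computation identifies $W=V_1$ and $U=V_2$.

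Since $W$ and $U$ are built intrinsically from $\omega$ and $\omega'$, they are $G$-stable, and as $V=U\oplus W$ every $g\in G$ decomposes as $g|_U\oplus g|_W$; conversely, the orthogonality of the decomposition shows that an arbitrary pair of automorphisms of $U$ and of $W$ glues to an automorphism of $V$. Hence $G=\Aut(V_1)\times\Aut(V_2)$, where $\Aut(V_1)$ is the orthogonal group of the nondegenerate quadratic space $(V_1,\omega)$. Now I would analyze the two factors. By Theorem~\ref{thm:oligo} the group $\Aut(V_2)$ is linearly oligomorphic, and by Proposition~\ref{prop:uh-irred} it acts irreducibly on $V_2$. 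For $\Aut(V_1)$ there is a small case split: if $d=\infty$, then $(V_1,\omega)$ is a nondegenerate infinite-dimensional quadratic space, hence the universal homogeneous $1$-quadratic space by Theorem~\ref{thm:mqchar}, so Theorem~\ref{thm:oligo} and Proposition~\ref{prop:uh-irred} again apply; if $d<\infty$, then $V_1$ is finite-dimensional, so $\Aut(V_1)$ is linearly oligomorphic for trivial reasons (take $E=V_1$) and acts irreducibly on $V_1$ since the standard representation of an orthogonal group is irreducible over $k$. Proposition~\ref{prop:sum-oligo-2} then shows $V=V_1\oplus V_2$ is a linearly oligomorphic representation of $\Aut(V_1)\times\Aut(V_2)=G$.

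To finish, observe that $V_1$ and $V_2$, pulled back along the two projections of $G$, are irreducible $G$-representations, and they are non-isomorphic because $\Aut(V_2)$ acts trivially on $V_1$ but non-trivially on $V_2$. Hence $V$ is semisimple of length two with distinct simple constituents, and Schur's lemma (Proposition~\ref{prop:schur}) together with Corollary~\ref{cor:schur1} gives $\End_G(V)=\End_G(V_1)\oplus\End_G(V_2)=k\oplus k$, in agreement with Table~\ref{tab:len2}. I expect the only subtle point to be the identification $G=\Aut(V_1)\times\Aut(V_2)$ — specifically, ruling out automorphisms that mix the two summands — which is forced by the first-order (intrinsic) descriptions of $W$ and $U$ and the directness $V=U\oplus W$; the rest is bookkeeping with the orthogonal decomposition and the harmless case split on $d$.
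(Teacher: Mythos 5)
Your proposal is correct and follows essentially the same route as the paper: identify $W=\ker(\omega(p))$ and its total orthogonal complement $U$ as canonically defined (hence $G$-stable) subspaces with $V=U\oplus W$, deduce $G=\Aut(U)\times\Aut(W)$, and conclude from the irreducibility of the two factors. You simply make explicit the details the paper compresses into ``the result follows'' (the case split on $d$, the appeal to Proposition~\ref{prop:sum-oligo-2}, and the non-isomorphy of the two constituents), all of which check out.
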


\begin{proof}
Let $V=U \oplus W$ be a Type~Ib space with parameter $(d, p)$, and let $G=\Aut(V)$. Since $W$ is the nullspace of $\omega(p)$, it is stable by $G$, and since $U$ is the total orthogonal complement of $W$, it too is stable by $G$. Thus each element of $g$ preserves both $U$ and $W$, and preserves both forms on each space. Conversely, any such map is an automorphism of $V$. Thus
\begin{displaymath}
G = \Aut(U) \times \Aut(W).
\end{displaymath}
Since $U$ and $W$ are both irreducible biquadratic spaces, the result follows.
\end{proof}


\subsection{Type Ic} \label{ss:Ic}

This class is introduced in the next first-order definition.

\begin{definition}
A biquadratic space $V$ is of \defn{Type~Ic} if $U=\ker(V)$ is non-zero and $V/U$ is the universal homogeneous biquadratic space. The \defn{parameter} of $V$ is $d=\dim(U)$. We note that $1 \le d \le \infty$.
\end{definition}

As with the Type~Ib definition, this is indeed first-order.

\begin{construction}
Let $d \in \{1,2,\ldots,\infty\}$. Let $V_1$ be a $d$-dimensional biquadratic space where both forms vanish, and let $V_2$ be the universal homogeneous biquadratic space. Then $V=V_1 \oplus V_2$ is a Type~Ic space with parameter $d$. This shows that Type~Ic spaces exist for every parameter $d$.
\end{construction}

\begin{proposition}
Two Type~Ic spaces are isomorphic if and only if their parameters coincide. In particular, a Type~Ic space is linearly $\omega$-categorical.
\end{proposition}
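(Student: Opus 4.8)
The plan is to follow the template set by the Type~Ia and Type~Ib propositions: show that every Type~Ic space with parameter $d$ is isomorphic to the explicit space $V_1 \oplus V_2$ of the preceding Construction, where $V_1$ is $d$-dimensional with both forms vanishing and $V_2$ is the universal homogeneous biquadratic space. The forward implication is immediate, since any isomorphism of biquadratic spaces carries $\ker(\cdot)$ to $\ker(\cdot)$ and hence preserves $d=\dim\ker(V)$.

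For the converse, let $V$ be a Type~Ic space with parameter $d$ and put $U=\ker(V)$, so that $\dim(U)=d$ and $V/U$ is universal homogeneous. First I would choose a vector-space complement $C$ of $U$ in $V$; such a complement always exists. Because $U=\ker(V)\subseteq\ker(\omega)\cap\ker(\omega')$, both forms vanish identically on $U$ and $U$ is orthogonal to all of $V$ under each form; hence, writing a general vector as $u+c$ with $u\in U$ and $c\in C$, one has $\omega(u_1+c_1,u_2+c_2)=\omega(c_1,c_2)$ and likewise for $\omega'$. This exhibits $V$ as the orthogonal direct sum of $(U,0,0)$ and $C$ (equipped with the restricted forms). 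Moreover the quotient map $V\to V/U$ restricts to an isomorphism of biquadratic spaces $C\to V/U$, again because $U=\ker(V)$: the forms on $V$ descend to $V/U$ and agree there with the ones pulled back from $C$.

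Putting these together gives $V\cong (U,0,0)\oplus (V/U)$. Now $(U,0,0)$ is a $d$-dimensional biquadratic space with vanishing forms, hence isomorphic to $V_1$, and $V/U$, being universal homogeneous, is isomorphic to $V_2$ by the uniqueness in Theorem~\ref{thm:homo}. Therefore $V$ is isomorphic to $V_1\oplus V_2$, and in particular any two Type~Ic spaces with the same parameter $d$ are isomorphic. For linear $\omega$-categoricity I would invoke the argument indicated after the Type~Ia corollary: the Type~Ic hypothesis and the value $d=\dim\ker(V)$ can be read off from $\Th(V)$ (the latter using the formula $\psi$ for membership in $\ker(\omega)\cap\ker(\omega')$ together with the independence formulas $\theta_n$), so any biquadratic space $V'$ with $\Th(V')=\Th(V)$ is again Type~Ic with parameter $d$, hence isomorphic to $V$.

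I do not expect a genuine obstacle here; the one step needing a line of care is the identification of $V$ with the orthogonal direct sum $(U,0,0)\oplus C$ — that is, verifying that the restricted forms on $C$ together with the zero forms on $U$ reconstruct $\omega$ and $\omega'$ on all of $V$. This is exactly the point at which one uses that $U$ equals $\ker(V)$, and not merely that $U$ lies in the nullspace of one of the two forms.
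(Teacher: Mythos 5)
Your proof is correct and takes essentially the same approach as the paper: both hinge on the observation that, since $U=\ker(V)$, the forms on $V$ are pulled back from $V/U$, so the isomorphism class is determined by $\dim(U)$ together with $V/U$ being universal homogeneous. The paper lifts an isomorphism of quotients directly between two Type~Ic spaces rather than passing through the normal form $V_1\oplus V_2$ via a choice of complement, but this is a cosmetic difference.
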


\begin{proof}
Let $V$ and $V'$ be two Type~Ic spaces. It is clear that if $V$ and $V'$ are isomorphic then they have the same parameter. Conversely, suppose that $V$ and $V'$ have the same parameter. Since $U=\ker(V)$ and $U'=\ker(V')$ are vector spaces of the same dimension, we can choose a linear isomorphism $i_0 \colon U \to U'$. Since $V/U$ and $V'/U'$ are both the universal homogeneous biquadratic space, there is an isomorphism $i_1 \colon V/U \to V'/U'$ of biquadratic spaces. Choose a linear map $i \colon V \to V'$ such that $i$ restricts to $i_0$ on $U$, and induces $i_1$ on the quotients. Then $i$ is an isomorphism of biquadratic spaces.
\end{proof}

\begin{proposition}
Let $V$ be a Type~Ic space, and put $G=\Aut(V)$. Then $G$ is linearly oligomorphic, $V$ has length two as a representation of $G$, and $\End_G(V)=k$.
\end{proposition}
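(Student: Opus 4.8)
The plan is to reduce to the explicit model and compute $\Aut(V)$ by hand; once that is done the representation-theoretic claims are formal. By the classification just proved, every Type~Ic space with parameter $d$ is isomorphic to $V = V_1 \oplus V_2$ with $V_1 = U$ a $d$-dimensional space carrying the zero forms and $V_2$ the universal homogeneous biquadratic space, so I would work with this model. Writing $V = U \oplus V_2$ as a bare vector space, the key preliminary observation is that $\Delta(V_2)=0$ (which holds since $V_2$ is universal homogeneous, by Theorem~\ref{thm:mqchar}) forces both $\omega$ and $\omega'$ to be non-degenerate on $V_2$; since moreover $U=\ker(V)$ kills all cross terms, this gives $\ker(\omega)=\ker(\omega')=U$ inside $V$.

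Next I would pin down $G=\Aut(V)$. Any $g\in G$ preserves $\ker(\omega)=U$, so writing $g$ in block form $\left(\begin{smallmatrix} a & b \\ c & d\end{smallmatrix}\right)$ with $a\colon U\to U$, $b\colon V_2\to U$, $c\colon U\to V_2$, $d\colon V_2\to V_2$, the invariance of $U$ forces $c=0$. Invertibility of $g$ then gives $a\in\GL(U)$ and $d$ bijective, and since $g$ is an isometry of both forms and the forms are carried by $V_2$, the map $d$ must be an isometry of $\omega$ and $\omega'$ on $V_2$, i.e. $d\in\Aut(V_2)$; meanwhile $b$ is completely unconstrained, as the forms vanish on $U$. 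Conversely every such triple $(a,b,d)$ visibly gives an automorphism, so $G = \Hom(V_2,U)\rtimes(\GL(U)\times\Aut(V_2))$, where $\GL(U)\times\Aut(V_2)$ acts block-diagonally on $V=U\oplus V_2$ and $\Hom(V_2,U)$ shears $V_2$ into $U$.

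With $G$ in hand everything else is formal. The filtration $0\subset U\subset V$ is $G$-stable; $U$ is irreducible because $G$ surjects onto $\GL(U)$, which acts transitively on $U\smallsetminus 0$, and $V/U\cong V_2$ is irreducible because $G$ surjects onto $\Aut(V_2)$ and $V_2$ is an irreducible $\Aut(V_2)$-module (Proposition~\ref{prop:uh-irred}); hence $V$ has length two. The two constituents are non-isomorphic, since the subgroup $1\times\Aut(V_2)\subset G$ acts trivially on $U$ but faithfully on $V/U$. Crucially, the extension $0\to U\to V\to V/U\to 0$ is \emph{not} $G$-split: a $G$-equivariant section $v\mapsto(\psi(v),v)$ of $V\to V/U$ would have to satisfy $\psi(v)+bv=\psi(v)$ for every $b\in\Hom(V_2,U)$ (the translations act trivially on $V/U$), forcing $\Hom(V_2,U)=0$, which is absurd. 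Hence the only subrepresentations of $V$ are $0,U,V$ and $U$ is the socle; given $T\in\End_G(V)$, $T$ maps $U$ into itself and acts there by a scalar $\lambda$ (Schur's lemma, Proposition~\ref{prop:schur}), so $T-\lambda$ kills $U$ and factors through $V/U$, with image a subrepresentation that is a quotient of the irreducible $V/U$, is not all of $V$, and is not isomorphic to $U$ — hence $0$. Thus $\End_G(V)=k$.

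Finally, for linear oligomorphy: $\GL(U)$ is linearly oligomorphic on $U$ (trivially if $d<\infty$, since $\Gr_m(U)$ is already finite-dimensional; and if $d=\infty$ because any two $m$-dimensional subspaces of an infinite-dimensional vector space are equivalent under the $\GL(U)$-action), and $\Aut(V_2)$ is linearly oligomorphic on $V_2$ by Theorem~\ref{thm:oligo}; hence $\GL(U)\times\Aut(V_2)$ is linearly oligomorphic on $U\oplus V_2=V$ by Proposition~\ref{prop:sum-oligo-2}. Since this group is a subgroup of $G$ acting on $V$ in exactly the same way, $G$ is linearly oligomorphic as well. The main obstacle is the computation of $G$ in the second paragraph — in particular seeing that $\ker(\omega)=U$ forces the block $c$ to vanish while nothing constrains $b$ — together with the non-splitting argument, which is precisely what makes $\End_G(V)$ equal to $k$ rather than $k\oplus k$; the rest is bookkeeping.
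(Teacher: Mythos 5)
Your proof is correct and follows essentially the same route as the paper: identify $U=\ker(V)$ as a $G$-stable subspace, compute $G$ as the block upper triangular group $\left(\begin{smallmatrix} \GL(U) & \Hom(V/U,U) \\ 0 & \Aut(V/U)\end{smallmatrix}\right)$, and deduce the length, endomorphism, and oligomorphy claims from that description. You have simply written out in full the verifications (non-splitness of the extension, non-isomorphic constituents, Proposition~\ref{prop:sum-oligo-2}) that the paper compresses into ``the result easily follows.''
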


\begin{proof}
Put $U=\ker(V)$. Clearly, $U$ is a $G$-subrepresentation of $V$. One easily sees that
\begin{displaymath}
G = \begin{pmatrix} \GL(U) & \Hom(V/U, U) \\ 0 & \Aut(V/U) \end{pmatrix},
\end{displaymath}
where $\Hom$ consists of all linear maps and $\Aut$ consists of automorphisms of biquadratic spaces. The result easily follows from this.
\end{proof}


\section{Type II spaces} \label{s:type2}

\subsection{Type IIa} \label{ss:IIa}

The following definition introduces this class.

\begin{definition} \label{def:IIa}
A biquadratic space $V$ is of \defn{Type~IIa} if it is universal and there exists $p \in \bP^1$ such that:
\begin{enumerate}
\item The kernel $W$ of $\omega(p)$ is non-zero.
\item $W$ is its own total orthogonal complement.
\item For each $q \ne p$ in $\bP^1$ there is an isomorphism $T \colon V/W \to W$ such that $\omega(q)_{T(x)}=\omega(p)_x$ for all $x \in V$. (After fixing representatives for $\omega(q)$ and $\omega(p)$.)
\end{enumerate}
The \defn{parameter} of $V$ is $p$.
\end{definition}

The parameter $p$ is uniquely defined. Indeed, twisting by $\GL(2)$, it suffices to assume the parameter is $\infty$ and show that $\ker(\omega)=0$. Thus suppose $x$ belongs to $\ker(\omega)$. Then $x$ clearly belongs to the total orthogonal complement to $W$, which is assumed to be $W$. However, by (c), we find $\omega'_{T^{-1}(x)}=\omega_x=0$, and so $T^{-1}(x)=0$ in $V/W$, and so $x=0$ since $T$ is an isomorphism.

In light of the above, we can rephrase (c) as follows: for each $q \ne p$ in $\bP^1$ and each $x \in V$ there exists a unique $y \in W$ such that $\omega(q)_y=\omega(p)_x$. This shows that (c) is (equivalent to) a first-order statement, and so the entire Type~IIa condition is first order.

We also note that it suffices to check Definition~\ref{def:IIa}(c) for a single value of $q$. Indeed, for notational simplicity suppose $p=\infty$ and we have checked (c) for $q=0$. Thus $\omega_{T(x)}=\omega'_x$ for all $x \in V$. Since $T(x)$ belongs to $W$, which is the nullspace of $\omega'$, we have $(\omega+q\omega')_{T(x)}=\omega'_x$ for all $x \in V$, which verifies (c) for $q$.

\begin{construction} \label{con:IIa}
Let $V$ be a space with basis $\{e_i, f_i\}_{i \ge 1}$. Define $\omega$ and $\omega'$ on $V$ by the matrices
\begin{displaymath}
\begin{pmatrix} 0 & 1 \\ 1 & 0 \end{pmatrix} \qquad
\begin{pmatrix} 0 & 0 \\ 0 & 1 \end{pmatrix},
\end{displaymath}
where here the $e$ vectors come before the $f$ vectors. Thus, explicitly, we have
\begin{displaymath}
\omega(e_i, f_j) = \delta_{i,j}, \qquad \omega'(f_i, f_j) = \delta_{i,j},
\end{displaymath}
and all other pairings vanish. The kernel $W$ of $\omega'$ is spanned by the $e_i$'s. One readily verifies that it is its own total orthogonal complement. Moreover, the map $T \colon V/W \to W$ defined by $T(f_i)=e_i$ satisfies $\omega_{T(x)}=\omega'_x$ for all $x$. We thus see that $V$ is of Type~IIa with parameter $\infty$. Twisting by $\GL(2)$ realizes any parameter.
\end{construction}

\begin{proposition}
Two Type~IIa spaces with the same parameter are isomorphic. In particular, a Type~IIa space is linearly $\omega$-categorical.
\end{proposition}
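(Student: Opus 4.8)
The plan is to reduce everything to a single normal form: I will show that a Type~IIa space with parameter $\infty$ is isomorphic to the space $V_0$ of Construction~\ref{con:IIa}. The general case then follows by twisting both given spaces by a common element of $\GL(2)$ carrying their parameter $p$ to $\infty$; and linear $\omega$-categoricity is immediate afterwards, since (as observed after Definition~\ref{def:IIa}) being Type~IIa and the value of the parameter are both first-order, so $\Th(V)=\Th(V')$ forces $V'$ to be Type~IIa with the same parameter as $V$, whence $V\cong V'$ by the isomorphism statement.

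So let $(V,\omega,\omega')$ be Type~IIa with parameter $\infty$, write $W=\ker(\omega')$, and let $T\colon V/W\to W$ be as in Definition~\ref{def:IIa}; applying (c) with $q=0$ we may assume $\omega_{T(x)}=\omega'_x$ for all $x\in V$. First I would assemble the structural data. Since $V$ is universal it is infinite dimensional, and since $V/W\cong W$ via $T$, both $W$ and $V/W$ are infinite dimensional. Because $W$ is its own total orthogonal complement, $\omega$ vanishes on $W$; because $T$ is an isomorphism, the pairing $W\times(V/W)\to k$ induced by $\omega$ is perfect; and because $\ker(\omega')=W$, the form $\bar\omega'$ induced by $\omega'$ on $V/W$ has trivial radical, i.e.\ is non-degenerate. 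Hence $(V/W,\bar\omega')$ is an infinite dimensional non-degenerate $1$-quadratic space, so it is the universal homogeneous one (Theorem~\ref{thm:mqchar}) and, being unique up to isomorphism (Proposition~\ref{prop:fraisse}), is isometric to the standard space $\bigl(\bigoplus_{i\ge1}k,\ \sum_i x_i^2\bigr)$. Transporting an orthonormal basis back, I get a basis $\bar b_1,\bar b_2,\dots$ of $V/W$ with $\bar\omega'(\bar b_i,\bar b_j)=\delta_{ij}$; choose lifts $b_i\in V$ and put $a_i=T(\bar b_i)\in W$. Then $\{a_i\}$ is a basis of $W$, $\{a_i\}\cup\{b_i\}$ a basis of $V$, and one checks directly that $\omega(a_i,a_j)=0$, $\omega(a_i,b_j)=\omega'(b_i,b_j)=\delta_{ij}$, $\omega'(a_i,-)=0$, $\omega'(b_i,b_j)=\delta_{ij}$, while $\omega(b_i,b_j)=C_{ij}$ for a symmetric matrix $C=(C_{ij})_{i,j\ge1}$.

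It remains to kill $C$ by a change of the $b_i$, after which the matrices of $\omega,\omega'$ in the resulting basis are precisely those of Construction~\ref{con:IIa} (with $a_i$ in the role of $e_i$), giving $V\cong V_0$. The naive substitution $b_i\mapsto b_i-\tfrac12\sum_j C_{ij}a_j$ formally works but is not legitimate in infinite dimensions, since the sum need not be finite. Instead I would set $b_i'=b_i+\sum_{j\le i}\pi_{ij}a_j$ with $\pi_{ij}=-C_{ij}$ for $j<i$ and $\pi_{ii}=-\tfrac12 C_{ii}$: each $b_i'$ is a finite combination of basis vectors, so $\{a_i\}\cup\{b_i'\}$ is again a basis of $V$. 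Since the $a_j$ lie in $W=\ker(\omega')$ and $\omega$ vanishes on $W$, none of $\omega'(a_i,-)$, $\omega'(b_i',b_j')$, $\omega(a_i,a_j)$, $\omega(a_i,b_j')$ changes; and expanding $\omega(b_i',b_j')$ and treating the cases $i<j$, $i=j$, $i>j$ (using $C_{ij}=C_{ji}$) gives $\omega(b_i',b_j')=0$ for all $i,j$.

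I do not anticipate a genuine obstacle; the one point that needs care is the last one. In infinite dimensions the symmetric change of basis that would diagonalize $C$ away need not define an operator on $V$, so one must arrange the substitution to be locally finite, which is exactly what the triangular choice of $\pi$ accomplishes. The only substantive input beyond this bookkeeping is the uniqueness of the infinite dimensional non-degenerate $1$-quadratic space, which is already available via Theorem~\ref{thm:mqchar} together with Proposition~\ref{prop:fraisse}.
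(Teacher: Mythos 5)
Your proposal is correct and follows essentially the same route as the paper: produce an orthonormal basis of $V/W$ for the induced form, transport it via $T$ to get a dual pair of bases with $\omega$ in the block form $\begin{pmatrix} 0 & 1 \\ 1 & \ast \end{pmatrix}$, and then kill the $\ast$ block by exactly the same triangular (hence locally finite) substitution $b_i \mapsto b_i - \tfrac12 C_{ii}a_i - \sum_{j<i} C_{ij}a_j$. The only cosmetic difference is that you invoke Theorem~\ref{thm:mqchar} and Proposition~\ref{prop:fraisse} to diagonalize $\bar\omega'$, where the paper simply takes an orthonormal basis directly.
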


\begin{proof}
Let $V$ be a Type~IIa space with parameter $\infty$. We show that $V$ is isomorphic to the space given in Construction~\ref{con:IIa}. Let $W=\ker(\omega')$, and let $T \colon V/W \to W$ be the given isomorphism. The form $\omega'$ is non-degenerate on $V/W$, and so there is an orthonormal basis $\{\ol{y}_i\}_{i \ge 1}$. Let $y_i \in V$ be any lift of $\ol{y}_i$. Let $x_i=T(y_i)$, so that the $x_i$'s form a basis of $W$. Since $W$ is isotropic for $\omega$, we have $\omega(x_i,x_j)=0$. We have
\begin{displaymath}
\omega(x_i, y_j) = \omega(T(y_i), y_j) = \omega'(y_i, y_j) = \delta_{i,j}.
\end{displaymath}
We thus see that the matrices for $\omega$ and $\omega'$ are
\begin{displaymath}
\begin{pmatrix} 0 & 1 \\ 1 & \ast \end{pmatrix} \qquad
\begin{pmatrix} 0 & 0 \\ 0 & 1 \end{pmatrix},
\end{displaymath}
where the first block consists of $x_i$'s, and the second of $y_i$'s.

We now modify the $y_i$ vectors to make the $\ast$ block in $\omega$ vanish. Put
\begin{displaymath}
z_i = y_i - \tfrac{1}{2} \omega(y_i, y_i) x_i - \sum_{j=1}^{i-1} \omega(y_i, y_j) x_j.
\end{displaymath}
The $z_i$'s are a basis for a complementary space to $W$. We have $\omega(x_i,z_i)=\delta_{i,j}$ and $\omega'(z_i,z_j)=\delta_{i,j}$. Moreover, one readily verifies $\omega(z_i,z_j)=0$. We thus see that the matrices for $\omega$ and $\omega'$ in the basis $\{x_i,z_i\}$ match those from Construction~\ref{con:IIa}.
\end{proof}

\begin{proposition}
Let $V$ be a Type~IIa space, and let $G=\Aut(V)$. Then $G$ is linearly oligomorphic, $V$ has length two as a representation of $G$, and $\End_G(V)=k[x]/(x^2)$.
\end{proposition}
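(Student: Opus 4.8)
The plan is to reduce to the explicit model of Construction~\ref{con:IIa} and then read off everything from the subgroup structure of $G=\Aut(V)$. By the preceding proposition, every Type~IIa space with parameter $p$ is isomorphic to a $\GL(2)$-twist of the space of Construction~\ref{con:IIa}, and twisting by $\GL(2)$ changes neither the underlying $G$-representation nor the automorphism group; so I may assume $V$ is that explicit space, with basis $\{e_i,f_i\}_{i\ge 1}$, $W=\ker(\omega')=\langle e_i\rangle$, and quotient $\bar V:=V/W$ carrying the zero form together with the nondegenerate form induced by $\omega'$. Since $G$ preserves $\omega'$ it preserves $W$, hence acts on $\bar V$ through a homomorphism $G\to\Aut(\bar V)$, the target being the orthogonal group of the induced form. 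From the explicit forms one checks that $W$ is its own orthogonal complement for $\omega$, so $\omega$ induces a perfect pairing $W\times\bar V\to k$; as $G$ preserves $\omega$, its action on $W$ is the contragredient of its action on $\bar V$, whence $W\cong(\bar V)^{\ast}\cong\bar V$ as $G$-representations (the second isomorphism coming from the $G$-invariant form on $\bar V$).

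Next I would observe that $G\to\Aut(\bar V)$ is surjective: for each orthogonal $D$ on $\bar V$, the linear map $g_D$ acting by $D$ on $\langle f_i\rangle\cong\bar V$ and by the inverse transpose of $D$ on $W$ is readily verified (using the matrices of Construction~\ref{con:IIa}) to preserve $\omega$ and $\omega'$, hence lies in $G$. Since $(\bar V,\text{induced form})$ is the universal homogeneous $\ulambda$-space for $\ulambda=[(2)]$, it is irreducible over its full automorphism group by Proposition~\ref{prop:uh-irred}; together with $W\cong\bar V$ this shows that $0\subset W\subset V$ is a composition series with irreducible quotients, so $V$ has length two.

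For linear oligomorphy, note that $D\mapsto g_D$ embeds $H:=\Aut(\bar V)$ into $G$ in such a way that $V=W\oplus\langle f_i\rangle\cong\bar V\oplus\bar V$ as $H$-representations. By Theorem~\ref{thm:oligo}, $H$ acts linearly oligomorphically on $\bar V$, so $\bar V^{\oplus 2}$ is linearly oligomorphic over $H$ by Proposition~\ref{prop:sum-oligo}; hence $V$ is linearly oligomorphic over $H$, and a fortiori over $G\supseteq H$. Since $G=\Aut(V)$ acts faithfully on $V$, it is a linearly oligomorphic group. Finally, for $\End_G(V)$: the two composition factors $W$ and $\bar V$ are isomorphic, so by Table~\ref{tab:len2} the algebra $\End_G(V)$ is $\rM_2(k)$ if $V$ is semisimple and $k[x]/(x^2)$ otherwise. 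To exclude the semisimple case I would exhibit a nonidentity element of $G$ acting trivially on both $W$ and $\bar V$ — for instance the unipotent map fixing every $e_i$, sending $f_1\mapsto f_1+e_2$ and $f_2\mapsto f_2-e_1$, and fixing $f_i$ for $i\ge 3$, which one checks preserves $\omega$ and $\omega'$. If $V$ were $G$-semisimple it would decompose as $W\oplus Y'$ with $Y'\cong\bar V$ a $G$-submodule, and such a map, acting trivially on $W$ and on $Y'\cong\bar V$, would act trivially on $V$, a contradiction. Hence the extension $0\to W\to V\to\bar V\to 0$ is non-split and $\End_G(V)\cong k[x]/(x^2)$.

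The only nonformal ingredients are the two explicit verifications that $g_D$ and the displayed unipotent map are automorphisms of $(V,\omega,\omega')$, plus the observation that $W$ is $\omega$-self-orthogonal; I expect the (very minor) obstacle to be just bookkeeping in the first of these, namely checking that the inverse transpose of $D$ on $W$ is exactly what is forced by $\omega$-invariance. Everything else is an application of the results already in hand.
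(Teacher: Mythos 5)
Your proof is correct and follows essentially the same route as the paper: reduce to Construction~\ref{con:IIa}, use the copy of the infinite orthogonal group inside $G$ together with Theorem~\ref{thm:oligo} and Proposition~\ref{prop:sum-oligo} for linear oligomorphy, and identify $W$ and $V/W$ as isomorphic irreducible constituents to pin down the length and the endomorphism algebra. The only differences are cosmetic: the paper computes the full block form of $\Aut(V)$ and reads off uniqueness of the subrepresentation, whereas you exhibit a single unipotent automorphism (which does check out) to rule out semisimplicity; also note that $\omega$ does not literally descend to ``the zero form'' on $V/W$ since $W \not\subset \ker(\omega)$, though you never use that claim.
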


\begin{proof}
It suffices to prove this for the space $V$ from Construction~\ref{con:IIa}. Automorphisms of $V$ are given by block matrices of the form
\begin{displaymath}
\begin{pmatrix} A & AB \\ 0 & A \end{pmatrix}
\end{displaymath}
where $A$ is an orthogonal matrix (i.e., $A^t=A^{-1}$) and $B$ is an anti-symmetric matrix. Thus, abstractly, $G$ is the semi-direct product of the infinite orthogonal group with the additive group of anti-symmetric matrices (the Lie algebra of the orthogonal group). It follows that $W$ is the unique subrepresentation of $V$. Both $W$ and $V/W$ are irreducible, as they are the standard representation of the infinite orthogonal group. Thus $V$ has length two. The endomorphism $T$ from Definition~\ref{def:IIa} shows that $\End_G(V)$ contains $k[x]/(x^2)$ as a subalgebra, and this is the whole algebra since $V$ has a unique irreducible subrepresentation.

Finally, let $G_0$ be the subgroup of $G$ consisting of matrices as above with $B=0$. Then $G_0$ is isomorphic to the infinite orthogonal group, and $V$ is a sum of two copies of its standard representation. Since the standard representation of $G_0$ is linearly oligomorphic (Theorem~\ref{thm:oligo}, or direct verification), so is the representation $V$ (Proposition~\ref{prop:sum-oligo}). Since $V$ is linearly oligomorphic for the subgroup $G_0$, it is also linearly oligomorphic for $G$.
\end{proof}

\subsection{Type IIb} \label{ss:IIb}

This class of spaces is introduced in the following first-order definition.

\begin{definition}
A biquadratic space $V$ is of \defn{Type~IIb} if it is universal and there exists $p \in \bP^1$ such that $\ker(\omega(p))$ is non-zero and the total orthogonal complement of $\ker(\omega(p))$ vanishes. The \defn{parameter} of $V$ is $p$.
\end{definition}

Lemma~\ref{lem:IIb-1} below shows that $p$ is the unique value such that $\ker(\omega(p))$ is non-zero, and so the parameter is well-defined.

\begin{construction} \label{con:IIb}
Let $V$ be a space with basis $\{e_i, f_i\}_{i \ge 1}$. Let $B$ be an $\infty \times \infty$ matrix such that every non-trivial linear combination of its columns has infinitely many non-zero entries. We define $\omega$ and $\omega'$ to be the matrices
\begin{displaymath}
\begin{pmatrix} 1 & B^t \\ B & 0 \end{pmatrix} \qquad
\begin{pmatrix} 0 & 0 \\ 0 & 1 \end{pmatrix},
\end{displaymath}
with conventions as in Construction~\ref{con:IIa}. Explicitly,
\begin{displaymath}
\omega(e_i, e_j) = \delta_{i,j}, \qquad
\omega(e_i, f_j) = B_{i,j}, \qquad
\omega'(f_i, f_j) = \delta_{i,j}.
\end{displaymath}
and all other pairings vanish. It is clear that $\ker(\omega')$ is the space spanned by the $e_i$'s. Suppose now that $x \in V$ is orthogonal to the $e_i$'s with respect to $\omega$. Write
\begin{displaymath}
x=\sum_{j=1}^n \alpha_j e_j + \sum_{j=1}^n \beta_j f_j.
\end{displaymath}
Then for $i>n$ we have
\begin{displaymath}
0 = \omega(e_i, x) = \sum_{j=1}^n \beta_j B_{i,j}.
\end{displaymath}
We thus see that the column vector $\sum_{j=1}^n \beta_j B_{\bullet,j}$ has finitely many non-zero entries, and so $\beta_j=0$ for each $j$. Since $0=\omega(e_i, x)=\alpha_i$ for all $i$, we find $x=0$. We thus see that $V$ has Type~IIb with parameter~$\infty$. Twisting by $\GL(2)$ realizes any parameter.
\end{construction}

Our next goal is to prove the following result.

\begin{proposition} \label{prop:IIb-unique}
Two Type~IIb spaces are isomorphic if and only if their parameters agree. In particular, a Type~IIb space is linearly $\omega$-categorical.
\end{proposition}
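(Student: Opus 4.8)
The plan is to reduce an arbitrary Type~IIb space, after a $\GL(2)$ twist, to the normal form of Construction~\ref{con:IIb}. So let $V$ be Type~IIb with parameter $\infty$; thus $W=\ker(\omega')$ is non-zero and has vanishing total orthogonal complement. The first step is to observe that $\omega$ restricted to $W$ must be non-degenerate: if $x\in W$ were orthogonal to $W$ under $\omega$, then since $W=\ker(\omega')$, $x$ would be orthogonal to $W$ under $\omega'$ as well, hence $x$ lies in the total orthogonal complement of $W$, which is $0$. Next, one shows $W$ is infinite dimensional. (If $\dim W<\infty$, then the total orthogonal complement of $W$ would be a finite-codimension subspace of $V$, hence non-zero since $V$ is infinite dimensional — $V$ is infinite dimensional because it is universal. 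Contradiction.) By the same token $V/W$ is infinite dimensional.

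The second step is to produce a good basis. Since $\omega|_W$ is a non-degenerate symmetric form on a countably infinite dimensional space, choose an orthonormal basis $\{e_i\}_{i\ge 1}$ of $W$. On $V/W$ the form $\omega'$ descends to a non-degenerate form (its kernel on $V/W$, pulled back to $V$, would lie in $\ker(\omega')=W$), so choose an $\omega'$-orthonormal basis $\{\ol f_i\}$ of $V/W$ and lift to vectors $f_i\in V$. At this stage the Gram matrices of $\omega$ and $\omega'$ in the basis $\{e_i\}\cup\{f_i\}$ have the block shape
\begin{displaymath}
\omega \leftrightarrow \begin{pmatrix} I & B^t \\ B & C\end{pmatrix}, \qquad \omega' \leftrightarrow \begin{pmatrix} 0 & 0 \\ 0 & I \end{pmatrix},
\end{displaymath}
where $B_{i,j}=\omega(e_i,f_j)$ and $C$ is the symmetric matrix $C_{i,j}=\omega(f_i,f_j)$. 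As in the Type~IIa argument, one can kill the block $C$ by the substitution $f_i\mapsto f_i-\tfrac12\omega(f_i,f_i)e_i-\sum_{j<i}\omega(f_i,f_j)e_j$; this changes $B$ but preserves $\omega|_W=I$ and $\omega'$, so we may assume $C=0$. Thus $V$ is isomorphic to the space built from the matrix $B$ exactly as in Construction~\ref{con:IIb}, \emph{except} that we have not yet shown $B$ has the required column-independence property.

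The third step identifies this property intrinsically. The total orthogonal complement of $W$ is $\{x\in V : \omega(e_i,x)=0\ \forall i,\ \omega'(e_i,x)=0\ \forall i\}$; the second family of conditions is automatic. Writing $x=\sum\alpha_j e_j+\sum\beta_j f_j$ (a finite sum), the equations $\omega(e_i,x)=0$ read $\alpha_i+\sum_j\beta_j B_{i,j}=0$. Hence the total orthogonal complement of $W$ vanishes if and only if, for every finitely supported vector $\beta$, the equation $\sum_j\beta_j B_{i,j}=0$ for all $i$ forces $\beta=0$ — which is precisely the statement that no non-trivial (finite) linear combination of the columns of $B$ is the zero vector, equivalently (by rescaling indices / a standard argument) no non-trivial linear combination has finite support. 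The Type~IIb hypothesis that the total orthogonal complement of $W$ vanishes therefore says exactly that $B$ satisfies the hypothesis of Construction~\ref{con:IIb}. Finally, any two $\infty\times\infty$ matrices $B, B'$ with this column-independence property give isomorphic spaces: the universal homogeneous biquadratic space has a concrete matrix description (the remark after Theorem~\ref{thm:mqchar}), and one can run a back-and-forth / f-injectivity argument — or directly observe that $(V,\omega)$ with the extra form $\omega'$ amounts to choosing, compatibly with the orthogonal structure on $W$, an isomorphism-type of ``generic'' pairing between $W$ and a complement; all such are equivalent. This gives the isomorphism classification; the $\omega$-categoricity then follows by the standard argument (the Type~IIb condition and the parameter $p$ are first-order, as noted before the construction, so $\Th(V)=\Th(V')$ forces $V'$ Type~IIb with the same parameter, hence $V'\cong V$).

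The main obstacle is the last step — showing that the column-independence property of $B$ is the \emph{only} invariant, i.e.\ that all admissible $B$'s are equivalent under change of basis preserving the block structure. Concretely one must show that the group of automorphisms of $(W,\omega|_W)\cong$ (infinite orthogonal group) together with base changes $f_i\mapsto f_i+\sum(\text{stuff})\cdot e_j$ acts transitively on the set of admissible $B$'s. I expect the cleanest route is not to fight with $B$ directly but to invoke the Fra\"iss\'e-theoretic characterization: verify that \emph{any} Type~IIb space with parameter $\infty$ is f-injective among finite-dimensional biquadratic spaces $Z$ with the property ``$\ker(\omega'|_Z)$ has trivial total orthogonal complement in $Z$'' (using Corollary~\ref{cor:univ} to solve the relevant systems of quadratic and linear equations, much as in the proof of Theorem~\ref{thm:mqchar}), and then apply Proposition~\ref{prop:fraisse} to conclude uniqueness. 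This replaces the matrix bookkeeping with a single extension-of-embeddings verification, which is where the real content lies.
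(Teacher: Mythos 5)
You correctly identify that the crux is a Fra\"iss\'e/f-injectivity argument in a suitably enriched category, and your preliminary reductions (non-degeneracy of $\omega\vert_W$, infinite-dimensionality of $W$ and $V/W$) are sound. But the category you propose to run the argument in does not work, and this is a genuine gap rather than a detail. First, finite-dimensional subspaces of $V$ essentially never satisfy your condition that $\ker(\omega'\vert_Z)$ have trivial total orthogonal complement \emph{inside} $Z$: already for $Z=\operatorname{span}(e_1,f_1)$ in Construction~\ref{con:IIb} one has $\ker(\omega'\vert_Z)=ke_1$, and $f_1-B_{1,1}e_1$ is totally orthogonal to it in $Z$. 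So $V$ is not an ind-object of your category and the back-and-forth cannot even start. Second, and more fundamentally, no intrinsic property of the abstract biquadratic space $Z$ can suffice: the subspace $U\cap\ker_V(\omega')$ is \emph{not} determined by the restricted forms on a finite-dimensional $U\subset V$ (e.g.\ $ke_1$ and $kx$ for a suitable $x\notin\ker(\omega')$ with $\omega(x,x)=1$, $\omega'(x,x)=0$ are isomorphic biquadratic spaces, yet no automorphism of $V$ can carry one to the other since $\ker(\omega')$ is $\Aut(V)$-stable). The correct fix is to take objects to be finite biquadratic spaces equipped with a \emph{distinguished} subspace $\fN(W)\subseteq\ker(\omega')$ as extra structure, with morphisms required to pull the marking back exactly; f-injectivity of $V$ (with $\fN(V)=\ker(\omega')$) in that category is what the paper proves, and the verification — which you leave as "solve the relevant systems much as in Theorem~\ref{thm:mqchar}" — splits into two genuinely different cases according to whether the new vector enlarges $\fN$, the second requiring a separate argument to find a high-strength subspace inside $\fN(V)$ itself.

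Two smaller points. Your explicit substitution $f_i\mapsto f_i-\tfrac12\omega(f_i,f_i)e_i-\sum_{j<i}\omega(f_i,f_j)e_j$ is copied from the Type~IIa setting, where the $e_i$ are $\omega$-isotropic and $\omega(e_i,f_j)=\delta_{i,j}$; here $\omega(e_i,e_j)=\delta_{i,j}$ and $\omega(e_i,f_j)=B_{i,j}$, so the cross-terms do not cancel and this formula does not kill $C$. (A Gram--Schmidt-style induction using Corollary~\ref{cor:univ} can be made to work, but it needs to be argued.) And note that this matrix normal form is in any case not needed once the f-injectivity argument is in place: uniqueness follows directly from Proposition~\ref{prop:fraisse}, with Construction~\ref{con:IIb} serving only to show existence.
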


Twisting by $\GL(2)$, it suffices to treat the case where the parameter is $\infty$. Thus fix a Type~IIb space $V$ with parameter $\infty$. Consider the following category $\cC$:
\begin{itemize}
\item An object is a finite biquadratic space $W$ equipped with a subspace $\fN(W)$ of $\ker(\omega')$.
\item A map $i \colon W \to W'$ is a map of biquadratic spaces such that $i^{-1}(\fN(W'))=\fN(W)$.
\end{itemize}
We regard $V$ as an ind-object of $\cC$, with $\fN(V)=\ker(\omega')$. The zero object in $\cC$ is initial, and so an f-injective of $\cC$ is universal. In particular, any two f-injective objects are isomorphic (Proposition~\ref{prop:fraisse}). The following lemma thus establishes Proposition~\ref{prop:IIb-unique}.

\begin{proposition} \label{prop:IIb-4}
$V$ is an f-injective ind-object of $\cC$.
\end{proposition}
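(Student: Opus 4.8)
The plan is to verify the f-injectivity criterion directly, so I must take finite objects $Z \hookrightarrow Y$ in $\cC$ (with $Y$ of codimension one over $Z$) and an embedding $Z \hookrightarrow V$, and extend the latter to an embedding $Y \hookrightarrow V$ compatible with the $\fN$-structure. Writing $Z \subset V$ and picking $y \in Y \setminus Z$, the task is to produce $z \in V$ realizing the right values of $\omega$, $\omega'$, and of the bilinear pairings $\omega(x,-)$, $\omega'(x,-)$ against a basis $x_1,\dots,x_n$ of $Z$, \emph{and} lying in $\ker(\omega') = \fN(V)$ precisely when $y$ was prescribed to lie in $\fN(Y)$. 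So the proof splits into two cases according to whether $y \in \fN(Y)$ or not.

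In the case $y \notin \fN(Y)$, the requirement $z \in \ker(\omega')$ is absent, and the argument is essentially the f-injectivity proof of the universal homogeneous space (Theorem~\ref{thm:mqchar}): one needs the $2n$ linear functionals $\omega(x_j,-)$, $\omega'(x_j,-)$ on $V$ to be linearly independent, pick an auxiliary functional $\lambda$ vanishing on $Z$ and independent from them, and apply Corollary~\ref{cor:univ} to solve the system $\omega(z,z)=\omega(y,y)$, $\omega'(z,z)=\omega'(y,y)$, $\omega(x_j,z)=\omega(x_j,y)$, $\omega'(x_j,z)=\omega'(x_j,y)$, $\lambda(z)=1$; the value $\lambda(z)=1$ forces $z \notin Z$ and one must also check $z \notin \fN(V)$, which will follow because the prescribed value of $\omega'(z,z)$ or some $\omega'(x_j,z)$ is nonzero (if $y$ is actually orthogonal under $\omega'$ to everything and isotropic, one can instead perturb the system to force $\omega'(z,z)\ne 0$). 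The case $y \in \fN(Y)$ is the delicate one: now $y$ is $\omega'$-null and $\omega'$-orthogonal to all of $Z$, so the $\omega'$-equations read $\omega'(z,z)=0$ and $\omega'(x_j,z)=0$, i.e. I am looking for $z \in \ker(\omega')=\fN(V)$, and I only get to use the form $\omega$ on the big isotropic-for-$\omega'$ subspace $\fN(V)$. The key structural input is that $\fN(V)$, equipped with $\omega$ alone, is itself a universal homogeneous \emph{single}-quadratic space — this is exactly the content of the ``total orthogonal complement of $\ker(\omega(p))$ vanishes'' hypothesis, which forces the columns of $B$ (in the notation of Construction~\ref{con:IIb}) to be independent even after restricting to the complement, equivalently forces the relevant $\Delta$ to vanish. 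Granting that, I can run the f-injectivity argument for a one-quadratic space inside $\fN(V)$ to realize $\omega(y,y)$ and $\omega(x_j,y)$ for those $x_j$ lying in $\fN(V)$; for the $x_j$ \emph{not} in $\fN(V)$ I need the extra pairings $\omega(x_j,z)$ as well, and here I use that the functionals $\omega(x_j,-)|_{\fN(V)}$ together with $\omega'(x_j,-)$ restricted to... — more precisely I use that $V = \fN(V) \oplus$ (a complement) and that $\omega$ pairs $\fN(V)$ nondegenerately onto the dual of that complement modulo $\fN(V)$, so the functionals $\omega(x_j,-)$ remain independent on $\fN(V)$.

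Concretely the steps are: (1) reduce to parameter $\infty$ by twisting; (2) set up $Z \subset V$, $y \in Y \setminus Z$, basis $x_1,\dots,x_n$ of $Z$; (3) dispose of the case $y \notin \fN(Y)$ by the Theorem~\ref{thm:mqchar}-style argument plus a check that $z \notin \fN(V)$; (4) in the case $y \in \fN(Y)$, prove the auxiliary claim that $(\fN(V),\omega|_{\fN(V)})$ is universal homogeneous as a one-quadratic space and that $\fN(V)$ meets the nondegeneracy hypotheses needed, using that the total orthogonal complement of $\fN(V)=\ker(\omega')$ vanishes; (5) realize the required data inside $\fN(V)$ via Corollary~\ref{cor:univ}, again adding an auxiliary functional to guarantee $z \notin Z$; (6) conclude $y \mapsto z$ defines the desired embedding $Y \to V$ in $\cC$, since it respects both forms and pulls $\fN(V)$ back to $\fN(Y)$ by construction.

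The main obstacle I anticipate is step (4)–(5): correctly identifying what "$(\fN(V),\omega)$ is universal homogeneous one-quadratic" means and why it follows from the vanishing of the total orthogonal complement of $\ker(\omega')$, and then bookkeeping the two kinds of basis vectors $x_j$ (those in $\fN(V)$ versus not) so that the linear functionals involved really are independent on $\fN(V)$ and Corollary~\ref{cor:univ} applies. The case analysis on whether the new vector is forced into $\fN$ is conceptually the crux; once that is handled, everything else is a routine dévissage exactly parallel to the proof of Theorem~\ref{thm:mqchar}.
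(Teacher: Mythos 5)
Your overall architecture matches the paper's: reduce to a codimension-one extension, split on whether the new basis vector is forced into $\fN$, and in the $\fN$-case use that $(\fN(V),\omega|_{\fN(V)})$ is an infinite-dimensional quadratic space with zero nullspace, which does indeed follow from the vanishing of the total orthogonal complement of $\ker(\omega')$ exactly as you anticipate. That half of your plan is essentially the paper's Case 2.

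The gap is in the case $y\notin\fN(Y)$. You propose to certify $z\notin\fN(V)$ by observing that some prescribed value $\omega'(z,z)$ or $\omega'(x_j,z)$ is nonzero, and, when all of these vanish, to ``perturb the system to force $\omega'(z,z)\ne 0$.'' This cannot be repaired along those lines. First, the perturbation is illegal: $y\mapsto z$ must be an embedding of biquadratic spaces, so $\omega'(z,z)$ is forced to equal $\omega'(y,y)$; changing it changes the object being embedded. Second, the problematic configuration genuinely occurs: an object of $\cC$ may have $\ol{y}\notin\fN(W')$ even though $\omega'_{\ol{y}}$ vanishes identically on $W'$, because membership in $\fN(V)=\ker(\omega')$ is a condition against all of $V$, not against the finite subspace --- this is precisely why the objects of $\cC$ carry the extra datum $\fN$ in the first place. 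Third, even when $\omega'(z,z)\ne 0$, checking $z\notin\fN(V)$ is not enough: a morphism in $\cC$ requires $\tau'^{-1}(\fN(V))=\fN(W')$, so no combination $\alpha z+\sum_i c_i x_i$ outside $\fN(W')$ may land in $\ker(\omega')$. The paper's fix is to force $z$ to lie in a finite-dimensional subspace $E$ with $E\cap(\fN(V)+\operatorname{span}(x_1,\dots,x_n))=0$ on which the forms still have high collective strength and the relevant functionals remain independent (Lemma~\ref{lem:IIb-3}); all the $\fN$-compatibility then comes for free. Relatedly, your claim that the $2n$ functionals $\omega(x_j,-)$, $\omega'(x_j,-)$ are linearly independent on $V$ fails whenever some $x_j\in\fN(V)$ (then $\omega'(x_j,-)=0$); those equations are vacuous and must simply be dropped, which is why the paper imposes independence of $\omega'(x_i,-)$ only for the $x_i$ outside $\fN(W)$.
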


We require a few lemmas before proving this proposition.

\begin{lemma} \label{lem:IIb-1}
If $x,y \in V$ satisfy $\omega_x+\omega'_y=0$ then $x=0$ and $y \in \fN(V)$.
\end{lemma}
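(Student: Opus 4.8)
The statement says: if $\omega_x + \omega'_y = 0$ in $V^\vee$, then $x = 0$ and $y \in \fN(V) = \ker(\omega')$. Here $V$ is a Type~IIb space with parameter $\infty$, so $W := \ker(\omega')$ is nonzero and its total orthogonal complement vanishes; moreover $V$ is universal. The plan is to first extract information from the hypothesis by pairing against suitable vectors, then exploit universality (via Corollary~\ref{cor:univ}) to produce a test vector that forces $x$ into the total orthogonal complement of $W$.

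First I would unpack the hypothesis: $\omega(x,v) + \omega'(y,v) = 0$ for all $v \in V$. Restricting $v$ to range over $W = \ker(\omega')$ kills the second term, giving $\omega(x,w) = 0$ for all $w \in W$; thus $x$ is $\omega$-orthogonal to $W$. Similarly $y$ is $\omega'$-orthogonal to $W$, but since $\omega'$ vanishes identically on $W$ that is automatic and gives nothing directly — I will need to work harder to pin down $y$. The key remaining point is to show $x$ is also $\omega'$-orthogonal to $W$, which combined with the above would say $x$ lies in the total orthogonal complement of $W$, hence $x = 0$ by the defining property of Type~IIb.

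To get $\omega'$-orthogonality of $x$ to $W$: fix $w \in W$ and I want $\omega'(x,w) = 0$; but $\omega'(x,w)$ already equals $\omega'(w, x)$ and $w \in \ker(\omega')$ forces this to be $0$ — wait, that is automatic too. So actually $x$ lies in the total orthogonal complement of $W$ as soon as $\omega(x,w)=0$ for all $w\in W$, which I already have. Hence $x = 0$ immediately. Then the hypothesis reduces to $\omega'_y = 0$, i.e. $\omega'(y,v) = 0$ for all $v$, which is exactly $y \in \ker(\omega') = \fN(V)$, finishing the proof. So the lemma is in fact short: pair against $W$, invoke the total-orthogonal-complement-vanishes hypothesis to kill $x$, then read off the condition on $y$.

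The one subtlety — and the place I'd be most careful — is the direction of the two orthogonality conditions defining "total orthogonal complement": a vector $u$ is in it iff $\omega(u,w) = \omega'(u,w) = 0$ for all $w \in W$. Since $W \subseteq \ker(\omega')$, the $\omega'$-condition is free, so membership is governed purely by $\omega$-orthogonality to $W$; the hypothesis hands us exactly that for $x$. If instead one needed genuine input from universality here, the move would be: were $x \neq 0$, pick a linear functional $\eta$ with $\eta(x) = 1$, use Corollary~\ref{cor:univ} to find $x' \in W$ with $\omega(x',-)$ matching $\omega(x,-)$ on a finite set and $\eta(x') = 1 \neq 0$, contradicting... — but I don't expect this to be necessary, since the total-orthogonal-complement hypothesis already does the work. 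I would write the short argument and only fall back on the universality maneuver if the total orthogonal complement turns out to require a subtlety I am overlooking. The main obstacle is really just making sure the two orthogonality conditions are bookkept correctly; there is no hard analysis here.
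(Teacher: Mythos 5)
Your argument is correct and is essentially identical to the paper's proof: evaluate the relation on $\fN(V)=\ker(\omega')$ to see $x$ is $\omega$-orthogonal (and automatically $\omega'$-orthogonal) to $\fN(V)$, conclude $x=0$ from the vanishing of the total orthogonal complement, and then read off $\omega'_y=0$. The universality fallback you sketch is indeed unnecessary.
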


\begin{proof}
Evaluating the given equation at $v \in \fN(V)$, we find $\omega(x,v)=0$, i.e., $x$ is $\omega$-orthogonal to $\fN(V)$. Of course, $x$ is also $\omega'$-orthogonal to $\fN(V)$. We thus see that $x$ belongs to the total orthogonal complement of $\fN(V)$, and so $x=0$. The given equation then simplifies to $\omega'_y=0$, which exactly means $y \in \fN(V)$.
\end{proof}

\begin{lemma} \label{lem:IIb-2}
Given any $s \ge 0$, we can find a finite dimensional subspace $E$ of $V$ such that $E \cap \fN(V)=0$, and $\omega$ and $\omega'$ have collective strength at least $s$ on $E$.
\end{lemma}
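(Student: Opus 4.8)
The statement is a finite-dimensional existence result, so the natural approach is to build $E$ incrementally, one vector at a time, using the fact that $V$ is universal (Type IIb spaces are universal by definition) together with Corollary~\ref{cor:univ} on infinite-strength polynomials. Fix the target $s$; by Theorem~\ref{thm:univ}, there is an integer $N=N(k,[(2),(2)],s)$ such that any biquadratic space of dimension $N$ embeds into $V$, and moreover a biquadratic space of dimension $\ge N$ in which $\omega,\omega'$ have collective strength $\ge s$ exists (e.g. the universal homogeneous space, or an explicit generic space). So it suffices to embed one fixed finite-dimensional space $E_0$ with the required strength property and the property $E_0\cap\fN(E_0)=0$, and then argue that the image $E$ inside $V$ inherits $E\cap\fN(V)=0$.

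The one subtlety is the condition $E\cap\fN(V)=0$: an abstract embedding of biquadratic spaces $E_0\hookrightarrow V$ need not respect $\fN$, i.e. the image of a nonzero vector of $E_0$ on which both forms vanish could land in $\ker(\omega')=\fN(V)$. So the cleaner route is to construct $E$ directly by a greedy process. Start with $E=0$. At each stage, having built a finite-dimensional $E$ with $E\cap\fN(V)=0$ on which the collective strength is some $s'<s$ (or $E=0$), I want to adjoin one more vector $v$ so that: (i) $v\notin E+\fN(V)$, which keeps $(E+kv)\cap\fN(V)=0$; and (ii) the new collective strength strictly increases, or more precisely, after finitely many steps exceeds $s$. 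For (i), note $E+\fN(V)$ is not all of $V$: indeed $\fN(V)=\ker(\omega')$ has the property that its total orthogonal complement vanishes, so $V/\fN(V)$ is infinite-dimensional (if it were finite-dimensional, $\omega$ restricted suitably would give a nonzero vector in the total orthogonal complement), hence we may choose a linear functional $\lambda$ vanishing on $E+\fN(V)$ but not identically zero — actually we just need $E+\fN(V)\subsetneq V$, which follows since $\fN(V)$ has infinite codimension. For (ii), we use Corollary~\ref{cor:univ}: the forms $\omega,\omega'$ on $V$ have infinite collective strength (this is part of universality / Theorem~\ref{thm:univ}), so we can prescribe the values $\omega(v,v)$, $\omega'(v,v)$, $\omega(v,e_j)$, $\omega'(v,e_j)$ for a basis $\{e_j\}$ of the current $E$, as well as $\lambda(v)=1$ (the linear equation $\lambda$ being independent of the degree-two data since $\lambda$ vanishes on $E$), choosing these scalars so that the augmented matrix has strictly larger collective strength.

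**Main obstacle.** The delicate point is (ii): arranging that the collective strength genuinely increases at each step and reaches $s$ in finitely many steps. This is really a statement about finite symmetric matrices — given a pair of finite symmetric matrices of collective strength $s'$, one can extend by one row/column (with freely chosen entries and a freely chosen new diagonal pair) to a pair of collective strength $s'+1$, or at least strength $\geq s'+c$ for some fixed increment. The cleanest way is probably to fix in advance a pair of finite symmetric matrices $(B,B')$ of size $m$ with collective strength $\ge s$ and with the additional property that no nonzero common-null vector exists (e.g. $B'$ itself invertible, which forces $\fN$ of that abstract space to be zero), and then do the greedy construction so that at stage $n\le m$ the Gram matrices of $e_1,\dots,e_n$ equal the top-left $n\times n$ blocks of $(B,B')$; Corollary~\ref{cor:univ} lets us solve for $e_{n+1}$ at each stage because the required degree-two conditions have infinite collective strength in $V$ and the single linear condition $\lambda(e_{n+1})=1$ is independent of them. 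At the end, $E=\langle e_1,\dots,e_m\rangle$ has the forms given by $(B,B')$, hence collective strength $\ge s$, and $E\cap\fN(V)=0$ because any $v\in E$ with $\omega'(v,-)|_V=0$ in particular has $B'(v,-)=0$ on $E$, forcing $v=0$ by invertibility of $B'$. (Strictly, one must check $\omega'(v,-)$ vanishes on all of $V$, not just on $E$; but if $v\in E\cap\ker(\omega')$ then $v$ lies in the total orthogonal complement of... — more carefully, $v\in\ker(\omega')=\fN(V)$ means $\omega'(v,w)=0$ for all $w$, in particular for $w=e_i$, i.e. the $B'$-pairing of $v$ with the basis of $E$ vanishes, so $v=0$.) This finishes the proof.
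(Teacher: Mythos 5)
Your proposal is correct, and the certification step you arrive at in the last few lines is sound, but it takes a different route from the paper and carries some unnecessary machinery. The paper takes an \emph{arbitrary} finite biquadratic space $W$ of collective strength $s$, enlarges it to $W'$ by adjoining dual vectors $f_1,\dots,f_n$ with $\omega'(e_i,f_j)=\delta_{i,j}$ and all other new pairings zero, embeds $W'$ into $V$ by universality, and sets $E=\sigma(W)$; the images of the $f_j$ then witness that no nonzero element of $E$ lies in $\ker(\omega')=\fN(V)$. You instead certify $E\cap\fN(V)=0$ by making $\omega'|_E$ nondegenerate ($B'$ invertible), which works for exactly the reason you give: a vector of $E$ that kills $\omega'$ on all of $V$ in particular kills it on $E$. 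This is a legitimate and arguably more direct alternative, at the cost of having to exhibit a finite pair $(B,B')$ of collective strength $\ge s$ with $B'$ invertible -- which you should make explicit, e.g. $\omega=\sum_{i=1}^n x_i^2$ and $\omega'=\sum_{i=1}^n d_i x_i^2$ with the $d_i$ distinct and nonzero and $n\ge 2s+1$, so that every nontrivial combination has rank $\ge n-1$; the paper's dual-vector trick avoids this existence question by accepting any strength-$s$ space. Once such a $(B,B')$ is fixed, the greedy construction in the middle of your write-up is superfluous: a single application of universality embeds the whole space at once. It is also the least rigorous part as written, since at each stage Corollary~\ref{cor:univ} requires the $2n+1$ linear functionals $\omega(e_j,-)$, $\omega'(e_j,-)$, $\lambda$ to be linearly independent on $V$ -- this does follow from Lemma~\ref{lem:IIb-1} together with the inductive hypothesis $E\cap\fN(V)=0$, but it is exactly the bookkeeping the paper defers to Lemma~\ref{lem:IIb-3}, and you assert it without proof. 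I would keep only the direct-embedding version of your argument.
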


\begin{proof}
Let $W$ be a biquadratic space of strength $s$ (meaning the two forms have collective strength $s$), and let $e_1, \ldots, e_n$ be a basis of $W$. Define a new biquadratic space $W'$ with basis $e_1, \ldots, e_n, f_1, \ldots, f_n$, as follows. On the $e$ vectors, the forms are as on $W$. We put $\omega'(e_i, f_j)=\delta_{i,j}$, and let all other pairings vanish. Since $V$ is universal, there is an embedding $\sigma \colon W' \to V$. We let $E=\sigma(W)$. It is clear that $E$ has strength $s$. If $x \in W$ is non-zero then $\omega'(x, f_j) \ne 0$ for some $j$, and so $\omega'(\sigma(x), \sigma(f_j)) \ne 0$, which shows that $\sigma(x) \not\in \fN(V)$. Thus $E \cap \fN(V)=0$.
\end{proof}

\begin{lemma} \label{lem:IIb-3}
Let $F$ be a finite dimensional subspace of $V$, let $\lambda_1, \ldots, \lambda_n$ be linearly independent functionals on $V$, and let $s \ge 0$ be given. We can then find a finite dimensional subspace $E$ of $V$ such that
\begin{enumerate}
\item $E \cap (\fN(V)+F)=0$
\item The restriction of the $\lambda_i$'s to $E$ are linearly independent.
\item The forms $\omega$ and $\omega'$ have collective strength at least $s$ on $E$.
\end{enumerate}
\end{lemma}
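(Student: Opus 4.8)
The plan is to bootstrap from Lemma~\ref{lem:IIb-2}, which already gives a subspace $E_0$ with $E_0 \cap \fN(V) = 0$ and collective strength $\ge s$, and then enlarge the ambient obstruction from $\fN(V)$ to $\fN(V) + F$ and also pin down the $\lambda_i$'s. The natural move is to increase the strength threshold first: pick $s'$ large enough (depending on $s$, $\dim(F)$, and $n$) so that removing a few dimensions still leaves strength $\ge s$. Concretely, if $W$ is a biquadratic space whose two forms have collective strength $t$, then restricting to a subspace of codimension $c$ drops the collective strength by at most $O(c)$ (for a single quadratic form, cutting by one linear condition changes the rank by at most $2$, hence the strength by at most $1$; with two forms and $c$ conditions we lose at most $2c$, say). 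So choose $s' = s + 2(\dim F + n)$ and apply Lemma~\ref{lem:IIb-2} with this $s'$ to get a finite dimensional $E_0 \subseteq V$ with $E_0 \cap \fN(V) = 0$ and collective strength $\ge s'$ on $E_0$.

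Next I would shrink $E_0$ inside $V$ to also kill the interaction with $F$ and fix the linear functionals. First, replace $E_0$ by $E_1 = E_0 \cap \bigcap_i \ker(\lambda_i|_{E_0})^{\text{comp}}$— more carefully: choose a subspace $E_1 \subseteq E_0$ of codimension (in $E_0$) at most $n$ on which the $\lambda_i$ remain linearly independent; this is possible provided the $\lambda_i|_{E_0}$ were already linearly independent, which we can arrange by first enlarging $E_0$ slightly (the $\lambda_i$ are linearly independent on $V$, so a generic finite dimensional subspace of $V$ sees them as linearly independent, and we may take $E_0$ inside such a subspace). Then deal with $F$: the condition $E \cap (\fN(V) + F) = 0$ is equivalent to asking that the composite $E \hookrightarrow V \to V/(\fN(V)+F)$ be injective on $E$ but we cannot simply intersect, since $\fN(V)+F$ need not be finite-codimensional. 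Instead, argue as follows: $E_1 \cap (\fN(V) + F)$ is a subspace of the finite dimensional space $E_1$, so it is finite dimensional; pick a complement $E \subseteq E_1$ to $E_1 \cap (\fN(V)+F)$ inside $E_1$. Then $E \cap (\fN(V)+F) = 0$ by construction, $\dim(E_1) - \dim(E) \le \dim(E_1 \cap (\fN(V)+F)) \le \dim F$ (since $E_1 \cap \fN(V) \subseteq E_0 \cap \fN(V) = 0$, the intersection $E_1 \cap (\fN(V)+F)$ injects into $F$ via the projection $\fN(V)+F \to (\fN(V)+F)/\fN(V) \hookrightarrow$, hmm — more simply, $E_1 \cap (\fN(V)+F)$ maps injectively to $F$ modulo $\fN(V)$, so has dimension $\le \dim F$). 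Finally, passing from $E_0$ to $E$ cost at most $n + \dim F$ dimensions total, and by the strength estimate the collective strength on $E$ is still $\ge s' - 2(n + \dim F) \ge s$; the $\lambda_i$ stay linearly independent on $E$ because we took $E$ inside $E_1$, and one should double-check the codimension count there — safest is to choose $E$ respecting both constraints simultaneously, i.e., inside $E_1$ pick a complement to $(E_1 \cap (\fN(V)+F))$ that is in "general position" with respect to the $\lambda_i$, which is possible because the $\lambda_i$ are already independent on $E_1$.

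The main obstacle is bookkeeping, not conceptual: $\fN(V)$ and $F$ are not finite-codimensional in $V$, so one cannot "intersect away" from them directly in $V$; the trick is that everything relevant happens inside the finite dimensional $E_0$, where $E_0 \cap \fN(V) = 0$ is already known, so $E_0 \cap (\fN(V)+F)$ is forced to be small (dimension $\le \dim F$). The only genuine estimate needed is that restricting the two quadratic forms to a codimension-$c$ subspace decreases collective strength by at most $2c$; this follows because for a single form $\omega$, and a hyperplane $H = \ker(\mu)$, one has $\operatorname{rank}(\omega|_H) \ge \operatorname{rank}(\omega) - 2$, hence strength drops by at most one, and the argument iterates and applies to every linear combination of the two forms simultaneously. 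I would state that sublemma explicitly (or cite it as standard) and then the rest is assembling the three reductions in order: boost strength, then fix the $\lambda_i$, then excise $F$.
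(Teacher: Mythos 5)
Your overall strategy is the same as the paper's: boost the strength threshold before invoking Lemma~\ref{lem:IIb-2}, observe that $E_0 \cap \fN(V)=0$ forces $\dim\bigl(E_0 \cap (\fN(V)+F)\bigr) \le \dim F$, excise $F$ by passing to a complement of that intersection inside $E_0$, and control the strength loss under passage to a finite-codimension subspace. All of that is correct (the paper only loses strength $1$ per codimension, since the strength of a quadratic form is $\lceil \tfrac{1}{2}\operatorname{rank}\rceil$ and the rank drops by at most $2$ on a hyperplane; your cruder bound of $2c$ is harmless).

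The gap is in how you secure condition (b). You want the $\lambda_i$ to be linearly independent on $E_0$ from the outset, arranged ``by first enlarging $E_0$ slightly'' or by taking $E_0$ ``inside'' a generic subspace on which they are independent. Neither is justified: Lemma~\ref{lem:IIb-2} produces a specific $E_0$ with no control over where it sits, so you cannot place it inside a prescribed generic subspace; and enlarging $E_0$ by arbitrary vectors may destroy $E_0 \cap \fN(V)=0$. Your intermediate step of choosing $E_1 \subseteq E_0$ of codimension $\le n$ ``on which the $\lambda_i$ remain linearly independent'' cannot help either, since a linear relation among the $\lambda_i$ that holds on $E_0$ holds on every subspace of $E_0$ --- shrinking never creates independence. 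The repair is the paper's enlargement trick, applied \emph{after} (a) and (c) are in place: if a nontrivial combination $\lambda$ of the $\lambda_i$ vanishes on $E$, note that $\fN(V)+F+E$ has infinite codimension in $V$ (because $\fN(V)=\ker(\omega')$ has infinite codimension, $\omega'$ having infinite rank by universality), pick $x \notin \fN(V)+F+E$ with $\lambda(x) \ne 0$, and replace $E$ by $E+kx$; this preserves (a) and (c) and strictly cuts down the space of relations satisfied by the $\lambda_i$ on $E$, so finitely many iterations achieve (b). Your general-position argument for choosing the complement of $E_1 \cap (\fN(V)+F)$ compatibly with the $\lambda_i$ is fine once independence on $E_1$ is secured, but that prior step is exactly what is missing.
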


\begin{proof}
Let $d=\dim(F)$. Applying Lemma~\ref{lem:IIb-2}, let $E'$ be a finite dimensional subspace of $V$ such that $E' \cap \fN(V)=0$, and $\omega$ and $\omega'$ have strength at least $d+s$ on $E'$. We can then find a subspace $E$ of $E'$ of codimension at most $d$ such that $E \cap (\fN(V)+F)=0$. Strength drops by at most $d$ when passing to a codimension $d$ subspace, so the strength of $\omega$ and $\omega'$ on $E$ is at least $s$.

Now, suppose that there is some non-trivial linear combination $\lambda$ of the $\lambda_i$'s that vanishes on $E$. Since $\fN(V)+F+E$ has infinite codimension in $V$, there is some vector $x$ not in this subspace such that $\lambda(x) \ne 0$. We can then replace $E$ with $E+kx$ to obtain a space still satisfying (a) and (c), but where $\lambda$ no longer vanishes. In other words, we have cut down the space of linear relations the $\lambda_i$'s satisfy on $E$. Repeating this procedure finitely many times, we end up with an $E$ also satisfying (b).
\end{proof}

\begin{proof}[Proof of Proposition~\ref{prop:IIb-4}]
Let $W$ and $W'$ be objects of $\cC$, and let $\sigma \colon W \to W'$ and $\tau \colon W \to V$ be embeddings. We must extend $\tau$ to an embedding $\tau' \colon W' \to V$. Without loss of generality, we assume $\dim(W')=\dim(W)+1$. For notational simplicity, we identify $W$ with a subspace of $W'$ via $\sigma$; we also write $\omega$ and $\omega'$ for the forms on all three spaces. Let $\ol{x}_1, \ldots, \ol{x}_n$ be a basis of $W$ such that $\ol{x}_1, \ldots, \ol{x}_m$ forms a basis of $\fN(W)$. Let $x_i=\tau(\ol{x}_i)$. We consider two cases.

\textit{Case 1: $\fN(W)=\fN(W')$.} Let $\ol{y}$ be a vector in $W'$ that does not belong to $W$. Then $\ol{x}_1, \ldots, \ol{x}_n, \ol{y}$ forms a basis for $W'$, and $\ol{x}_1, \ldots, \ol{x}_m$ forms a basis for $\fN(W')$. Applying Lemma~\ref{lem:IIb-3}, let $E$ be a finite dimensional subspace of $V$ such that
\begin{itemize}
\item $E \cap (\fN(V)+\operatorname{span}(x_1, \ldots, x_n))=0$
\item The forms $\omega$ and $\omega'$ have high strength on $E$.
\item The linear forms $\omega(x_i, -)$, for $1 \le i \le n$, and $\omega'(x_i, -)$ for $m<i \le n$, are linearly independent on $E$.
\end{itemize}
Note that the linear forms in the third item above are linearly independent on all of $V$ by Lemma~\ref{lem:IIb-1}. By Corollary~\ref{cor:univ}, we can find $y \in E$ such that
\begin{align*}
\omega(y, y) &= \omega(\ol{y}, \ol{y}), &
\omega(x_i, y) &= \omega(\ol{x}_i, \ol{y}), \quad \text{for $1 \le i \le n$,} \\
\omega'(y, y) &= \omega'(\ol{y}, \ol{y}), &
\omega'(x_i, y) &= \omega'(\ol{x}_i, \ol{y}), \quad \text{for $m<i \le n$.}
\end{align*}
Of course, we also have $\omega'(x_i, y)=\omega'(\ol{x}_i, \ol{y})$ for $1 \le i \le m$, as both sides vanish. We can now define $\tau'$ by putting $\tau'(\ol{y})=y$. Since $y \in E$, we find that $\tau'$ is a morphism in $\cC$.

\textit{Case 2: $\fN(W) \ne \fN(W')$.} Let $\ol{y}$ be a vector in $\fN(W')$ that does not belong to $\fN(W)$. Then $\ol{x}_1, \ldots, \ol{x}_n, \ol{y}$ forms a basis for $W'$, and $\ol{x}_1, \ldots, \ol{x}_m, \ol{y}$ forms a basis for $\fN(W')$. Let $E$ be a finite dimensional subspace of $\fN(V)$ such that
\begin{itemize}
\item $E \cap \operatorname{span}(x_1, \ldots, x_n)=0$.
\item The form $\omega$ has high strength on $E$.
\item The linear forms $\omega(x_i, -)$, for $1 \le i \le n$, are linearly independent on $E$.
\end{itemize}
We now explain how to find $E$. We begin with three observations. First, $\fN(V)$ is infinite dimensional, as otherwise its total orthogonal complement would have finite codimension. Second, the linear forms in the third item are linearly independent on $\fN(V)$ by the argument in Lemma~\ref{lem:IIb-1}. And third, $\ker(\omega \vert_{\fN(V)})=0$, as any element of the kernel belongs to the total orthogonal complement of $\fN(V)$. Now, take a large finite dimensional subspace $\tilde{E}$ of $\fN(V)$ satisfying the second and third conditions. Then the collection of subspaces $E$ of $\tilde{E}$ of a fixed large dimension satisfying the first (or second, or third) condition is a non-empty Zariski open subset of the appropriate Grassmannian. There is thus a single $E$ satisfying all three conditions.

Now, by Corollary~\ref{cor:univ}, we can find $y' \in E$ such that
\begin{displaymath}
\omega(y, y) = \omega(\ol{y}, \ol{y}), \qquad
\omega(x_i, y) = \omega(\ol{x}_i, \ol{y}), \quad \text{$1 \le i \le n$.}
\end{displaymath}
Of course, we also have
\begin{displaymath}
\omega'(y, y) = \omega'(\ol{y}, \ol{y}), \qquad
\omega'(x_i, y) = \omega'(\ol{x}_i, \ol{y}), \quad \text{$1 \le i \le n$}
\end{displaymath}
as both sides of each equation vanish. We can thus define $\tau'$ by putting $\tau'(\ol{y})=y$.
\end{proof}

\begin{remark}
Proposition~\ref{prop:IIb-4} implies that $V$ is a homogenous ind-object of $\cC$. Concretely, this means that if $U$ and $U'$ are finite dimensional subspaces of $V$ and $\sigma \colon U \to U'$ is an isomorphism of biquadratic spaces such that $\sigma$ maps $U \cap \fN(V)$ isomorphically onto $U' \cap \fN(V)$ then there is an automorphism $g$ of $V$ such that $\sigma(x)=gx$ for all $x \in U$. An important point here is that $U \cap \fN(V)$ cannot be determined from the biquadratic space $U$ alone; this is why the objects in $\cC$ are endowed with extra structure.
\end{remark}

We prove one more result:

\begin{proposition} \label{prop:IIb-group}
Let $V$ be a Type~IIb space, and let $G=\Aut(V)$. Then $G$ is linearly oligomorphic, $V$ has length two as a representation of $G$, and $\End_G(V)=k$.
\end{proposition}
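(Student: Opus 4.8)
The plan is to work abstractly, using homogeneity rather than a direct description of $G$ (the explicit automorphism group of Construction~\ref{con:IIb} is awkward). After twisting by $\GL(2)$ we may assume the parameter is $\infty$, so that $W:=\ker(\omega')$ is a nonzero $G$-stable subspace whose total orthogonal complement vanishes. First I would record the elementary consequences: $\omega$ is non-degenerate on $V$ (its radical lies inside the total orthogonal complement of $W$), $\omega|_W$ is non-degenerate, the induced form $\bar\omega'$ on $V/W$ is non-degenerate, and — since $V$ is universal — $W$ and $V/W$ are both infinite-dimensional. Everything else rests on the homogeneity of $V$ as an ind-object of $\cC$ proved in Proposition~\ref{prop:IIb-4}: an isomorphism between finite-dimensional subspaces of $V$ that respects both forms and carries $U\cap W$ onto $U'\cap W$ extends to an element of $G$.

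Next I would show $W$ and $V/W$ are irreducible $G$-representations, so that $0\subsetneq W\subsetneq V$ is a composition series and $V$ has length two. For $W$: given a nonzero $G$-subrepresentation $M$ and $0\ne x\in M$, any $y\in W$ with $\omega(y,y)=\omega(x,x)$ gives a $\cC$-isomorphism $kx\to ky$ (both forms agree, and $kx\cap W=kx$, $ky\cap W=ky$), so $y\in M$; since vectors of a fixed norm span an infinite-dimensional non-degenerate quadratic space over the algebraically closed field $k$, $M=W$. For $V/W$: given nonzero $\bar x,\bar y$ with equal $\bar\omega'$-norm, lift to $x,y\notin W$; as $\omega(x,\cdot)|_W$ is a nonzero functional we can modify $x$ by an element of $W$ to arrange $\omega(x,x)=\omega(y,y)$ without changing $\bar x$ or $\omega'(x,x)$, and then $kx\to ky$ is a $\cC$-isomorphism, so homogeneity gives $g\bar x=\bar y$; irreducibility follows as before.

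The heart of the matter is $\End_G(V)=k$, which by Table~\ref{tab:len2} means $V$ is not semisimple and $W\not\cong V/W$. For non-semisimplicity I would suppose $\pi\colon V\to W$ is $G$-equivariant with $\pi|_W=\id_W$; for $x\notin W$ the stabilizer $G_x$ fixes $\pi(x)\in W$, so it suffices to prove $W^{G_x}=0$. Given $0\ne w\in W$, choose a nonzero isotropic $u\in W$ with $\omega(u,x)=\omega(u,w)=0$ (possible in infinite dimension over $k$); then $x\mapsto x$, $w\mapsto w+u$ is a $\cC$-isomorphism $\mathrm{span}(x,w)\to\mathrm{span}(x,w+u)$, which extends to $g\in G_x$ with $gw\ne w$. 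Hence $\pi(x)=0$ for all $x\notin W$, and since such $x$ span $V$ we get $\pi=0$, a contradiction; so $0\to W\to V\to V/W\to 0$ is non-split. For $W\not\cong V/W$ I would compute fixed spaces of a vector stabilizer: for $0\ne w\in W$ the same style of perturbation shows $W^{G_w}=kw$ is one-dimensional, while $(V/W)^{G_w}=0$ (perturb a lift $v$ of a hypothetical fixed vector by a nonzero isotropic $t$ with $\omega(t,w)=0$, $2\omega(t,v)+\omega(t,t)=0$, $2\omega'(t,v)+\omega'(t,t)=0$ and $v+t\notin W$, which pushes $v$ out of its coset). A $G$-isomorphism would have to match these fixed spaces, so none exists. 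Given non-semisimplicity and $W\not\cong V/W$, a short dévissage with Schur's lemma (Proposition~\ref{prop:schur}) finishes: $W$ is the unique proper nonzero subrepresentation, so any $T\in\End_G(V)$ preserves $W$ and acts as scalars $\lambda,\mu$ on $W$ and $V/W$; if $\lambda\ne\mu$ then $T-\mu$ is a nonzero $G$-map with image in $W$ vanishing — according to whether its restriction to $W$ is zero — either producing a $G$-splitting or a nonzero map $V/W\to W$, both impossible, so $T$ is scalar.

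Finally, for linear oligomorphy I would use homogeneity to identify $\Gr_d(V)/G$ with the (finite-dimensional) set of isomorphism classes of $d$-dimensional objects of $\cC$ — which all embed in $V$ since $0$ is initial and $V$ is f-injective — and then produce a single finite-dimensional $E\subseteq V$ into which every such object $\cC$-embeds once $E$ is given the structure $\fN(E)=E\cap W$; this makes $\Gr_d(E)\to\Gr_d(V)/G$ surjective. Concretely I would take $E=B\oplus A$ with $B\subseteq W$ a subspace on which $\omega$ is non-degenerate of dimension $\ge 2d$ (so $B$ absorbs the $\fN$-part of any $d$-dimensional object), and $A$ produced by Lemma~\ref{lem:IIb-3} so that $A\cap W=0$, the forms have large collective strength on $A$, and the functionals $\omega(\cdot,b)$ for $b$ in a basis of $B$ are linearly independent on $A$. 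Given a $d$-dimensional object $U=U_0\oplus U_1$ with $\fN(U)=U_0$, one embeds $U_0$ isometrically into $B$ and then, exactly as in the f-injectivity argument of Theorem~\ref{thm:mqchar}, solves for the images of a basis of $U_1$ inside $A$ using Corollary~\ref{cor:univ}: the constraints are finitely many quadratic equations of high strength together with finitely many linearly independent linear equations. I expect this last step to be the main obstacle, since it is where the strength estimates of Theorem~\ref{thm:univ} must be balanced against the $\fN$-bookkeeping to yield one $E$ that works uniformly over all $d$-dimensional $U$.
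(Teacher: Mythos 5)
Your arguments for length two and for $\End_G(V)=k$ are correct, though they take a different route from the paper. The paper first proves a strengthened f-injectivity (Lemma~\ref{lem:IIb-5}) and deduces that finite dimensional subspaces are ``definably closed'' ($V^H=W$ for $H$ the pointwise stabilizer of $W$, Lemma~\ref{lem:IIb-6}); applied to a line this gives $V^{G_x}=kx$, so every vector is an eigenvector of any equivariant endomorphism and $\End_G(V)=k$ falls out in three lines, with no need to separately establish non-splitness or $W\not\cong V/W$. Your route through Table~\ref{tab:len2} -- computing $W^{G_x}=0$ for $x\notin W$, $W^{G_w}=kw$ and $(V/W)^{G_w}=0$ by perturbation arguments -- is more work but sound; all the required perturbing vectors exist by the strength/independence lemmas, and your irreducibility proofs (moving within a norm level set by $\cC$-homogeneity) are a clean variant of the paper's (which instead uses Corollary~\ref{cor:univ} to escape a hyperplane containing a putative subrepresentation).

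The linear oligomorphy step, however, has a genuine gap, which you partly anticipate. Your $E=B\oplus A$ is fixed in advance, but the iterated application of Corollary~\ref{cor:univ} inside $A$ requires at each stage that the functionals $\omega(v,\cdot)|_A$ and $\omega'(v,\cdot)|_A$ attached to the \emph{previously chosen} images $v\in A$ be linearly independent on $A$ (together with the $\omega(b,\cdot)|_A$). Lemma~\ref{lem:IIb-1} gives independence on all of $V$, and Lemma~\ref{lem:IIb-3} can only arrange independence on $A$ for functionals named before $A$ is chosen; the $v$'s are produced during the induction, so nothing rules out, say, $\omega(v_1,\cdot)|_A=\omega'(v_1,\cdot)|_A$, in which case the pairing conditions for the next vector become inconsistent within $A$. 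The paper sidesteps strength arguments entirely here: it defines an explicit object $E_{n,m,\ell}$ of $\cC$ (an $\omega$-orthonormal block inside $\fN$, an $\omega$-orthonormal block outside $\fN$ that is $\omega'$-null, and an $\omega'$-orthonormal block that is $\omega$-null) and shows by direct linear algebra (Lemma~\ref{lem:IIb-7}) that every $n$-dimensional object of $\cC$ embeds into $E_{2n,2n,2n}$, writing the embedding as a sum $\alpha+\beta+\gamma$ of three maps into the orthogonal blocks. Homogeneity then moves any $n$-dimensional subspace of $V$ into a fixed copy of $E_{2n,2n,2n}$. I would replace your construction of $E$ with this one.
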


Again, twisting by $\GL(2)$ we assume that the parameter of $V$ is $\infty$. We will prove the proposition in a series of lemmas. The first is a slightly stronger version of Proposition~\ref{prop:IIb-4}.

\begin{lemma} \label{lem:IIb-5}
Let $\sigma \colon W \to W'$ and $\tau \colon W \to V$ be as in the proof of Proposition~\ref{prop:IIb-4}. If $F$ is a finite dimensional subspace of $V$ such that $\tau(W) \cap F=0$, then we can extend $\tau$ to $\tau' \colon W' \to V$ such that $\tau'(W') \cap F=0$.
\end{lemma}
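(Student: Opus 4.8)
The plan is to rerun the argument in the proof of Proposition~\ref{prop:IIb-4}, but carry along the extra finite dimensional subspace $F$ and arrange that the new vector $y$ we produce lands outside $F$ (indeed, outside $\tau(W)+F$). The point is that at the key step we invoke Corollary~\ref{cor:univ} to find $y$ inside some carefully chosen finite dimensional subspace $E$ of $V$, and we have freedom in choosing $E$; we just need to choose it disjoint from $F$ (modulo $\tau(W)$) as well. Concretely, write $W$, $W'$, $\sigma$, $\tau$, $\ol{x}_i$, $x_i$, $\ol{y}$ as in that proof, reducing again to $\dim(W')=\dim(W)+1$, and split into the same two cases according to whether $\fN(W)=\fN(W')$ or not.

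In Case~1 ($\fN(W)=\fN(W')$): apply Lemma~\ref{lem:IIb-3} with the finite dimensional subspace taken to be $\operatorname{span}(x_1,\ldots,x_n)+F$ (rather than just $\operatorname{span}(x_1,\ldots,x_n)$), so that the resulting $E$ satisfies $E\cap(\fN(V)+\operatorname{span}(x_1,\ldots,x_n)+F)=0$, while still having high strength and keeping the relevant linear forms independent on $E$. Then Corollary~\ref{cor:univ} produces $y\in E$ with the same list of form values as before, and since $y\in E$ we have $y\notin \fN(V)+\operatorname{span}(x_1,\ldots,x_n)+F$; in particular $\tau'(W')=\tau(W)+ky$ meets $F$ only inside $\tau(W)$, which is $0$ by hypothesis, so $\tau'(W')\cap F=0$. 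In Case~2 ($\fN(W)\ne\fN(W')$), where $\ol{y}\in\fN(W')$: we argue inside $\fN(V)$ exactly as in the original proof, but now also require the chosen subspace $E\subset\fN(V)$ to satisfy $E\cap(\operatorname{span}(x_1,\ldots,x_n)+F)=0$. Since this last condition, like the others, is satisfied by a non-empty Zariski-open set of subspaces $E$ of a fixed large dimension in a sufficiently large $\tilde{E}\subset\fN(V)$ (using again that $\fN(V)$ is infinite dimensional), such an $E$ exists; then $y\in E$ yields $\tau'(W')\cap F=0$ as before.

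The only real subtlety — and the step I expect to need the most care — is bookkeeping the modified hypotheses fed into Lemma~\ref{lem:IIb-3} (and into the Zariski-openness argument in Case~2): one must check that enlarging the "forbidden" finite dimensional subspace from $\operatorname{span}(x_i)$ to $\operatorname{span}(x_i)+F$ does not disturb the linear independence of the forms $\omega(x_i,-)$, $\omega'(x_i,-)$ used there, which it does not, since that independence was established on all of $V$ (resp.\ all of $\fN(V)$) via Lemma~\ref{lem:IIb-1} and is unaffected by the choice of $E$. Everything else is a verbatim repetition of the proof of Proposition~\ref{prop:IIb-4}, so no further new ideas are required.
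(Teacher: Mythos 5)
Your proposal is correct and is essentially identical to the paper's proof, which likewise just reruns the argument of Proposition~\ref{prop:IIb-4} with $E$ chosen so that $E \cap (\fN(V)+\operatorname{span}(x_1,\ldots,x_n)+F)=0$ in Case~1 and $E \cap (\operatorname{span}(x_1,\ldots,x_n)+F)=0$ in Case~2. Your additional checks (that the linear-independence conditions are unaffected and that $y\notin \tau(W)+F$ forces $\tau'(W')\cap F=0$) are exactly the right details to verify.
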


\begin{proof}
The proof of Proposition~\ref{prop:IIb-4} applies with slight modification: in Case~1, choose $E$ so
\begin{displaymath}
E \cap (\fN(V)+\operatorname{span}(x_1, \ldots, x_n)+F)=0,
\end{displaymath}
and in Case~2 choose $E$ so
\begin{displaymath}
E \cap (\operatorname{span}(x_1, \ldots, x_n)+F)=0. \qedhere
\end{displaymath}
\end{proof}

The next lemma shows that finite dimensional subspaces of $V$ are definably closed, in the sense of model theory.

\begin{lemma} \label{lem:IIb-6}
Let $W$ be a finite dimensional subspace of $V$, and let $H \subset G$ be the subgroup fixing each vector in $W$. Then $V^H=W$.
\end{lemma}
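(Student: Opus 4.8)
The plan is to prove both inclusions, the nontrivial one being $V^H \subseteq W$. So suppose $v \in V$ is fixed by every automorphism of $V$ fixing $W$ pointwise; I want to show $v \in W$. The strategy is to assume $v \notin W$ and produce an automorphism $g \in H$ with $gv \neq v$, contradicting $v \in V^H$. To build such a $g$, I would use the homogeneity established in Proposition~\ref{prop:IIb-4} (together with its strengthened form, Lemma~\ref{lem:IIb-5}): it suffices to find two embeddings of a common finite biquadratic object $W'$ (with its extra $\fN$-structure) into $V$ that agree on $W$ but send a chosen element of $W'$ to two different places, and then glue them into an automorphism.

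Concretely, set $U = W + kv$, a finite dimensional subspace, and let $U$ carry the induced biquadratic forms and the induced subspace $\fN(U) = U \cap \fN(V)$, making it an object of $\cC$. The identity gives one embedding $U \hookrightarrow V$. I would produce a second embedding $\tau \colon U \to V$ which restricts to the identity on $W$ but has $\tau(v) \neq v$; then, applying the homogeneity remark following Proposition~\ref{prop:IIb-4} to the isomorphism between the two images (which agree on $W$), I get $g \in G$ with $g|_W = \id$, i.e.\ $g \in H$, and $gv = \tau(v) \neq v$. To construct $\tau$: split into the two cases of the proof of Proposition~\ref{prop:IIb-4} according to whether adjoining $v$ enlarges the null space, i.e.\ whether $v \in \fN(V) + W$ up to the relevant bookkeeping. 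In either case, the construction there chooses $\tau(v)$ to be \emph{any} vector $y$ in a suitable finite dimensional subspace $E$ of $V$ satisfying a system of quadratic and linear conditions; the point is that $E$ can be taken disjoint from $U$ itself. Indeed Lemma~\ref{lem:IIb-3} (resp.\ the Case~2 argument) lets us additionally demand $E \cap U = 0$, so $y \neq v$ automatically, and the solution set for $y$ inside $E$ is nonempty (by Corollary~\ref{cor:univ}) and in fact positive-dimensional once $\dim E$ is large, so we may even pick $y$ avoiding any prescribed finite set. This forces $\tau(v) \neq v$.

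There is one degenerate situation to handle separately: if $v \in \fN(V)$ but the conditions defining $\tau(v)$ happen to pin it down uniquely, the above freedom could fail. But in the Type~IIb setting $\ker(\omega|_{\fN(V)}) = 0$ and $\fN(V)$ is infinite dimensional (both observed in the proof of Proposition~\ref{prop:IIb-4}), so the map $x \mapsto \omega_x$ on $\fN(V)$ is injective with infinite dimensional image; the finitely many linear and quadratic constraints cut out an affine subvariety of $E$ of dimension growing with $\dim E$, leaving room to dodge $v$. So in all cases we get $gv \neq v$, completing the contradiction and hence the inclusion $V^H \subseteq W$; the reverse inclusion $W \subseteq V^H$ is immediate from the definition of $H$.

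I expect the main obstacle to be purely bookkeeping: matching the element $v$ to the role of $\ol{y}$ in the proof of Proposition~\ref{prop:IIb-4}, carefully tracking which of Case~1 / Case~2 applies and ensuring the ``disjointness from $U$'' and ``room to avoid $v$'' refinements are compatible with conditions (a)--(c) of Lemma~\ref{lem:IIb-3} simultaneously. No genuinely new idea beyond the homogeneity already proved should be needed; the lemma is essentially a corollary of Proposition~\ref{prop:IIb-4} plus the observation that the extension in that proof is never unique.
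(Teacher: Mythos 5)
Your proposal is correct and is essentially the paper's own argument: the paper applies Lemma~\ref{lem:IIb-5} with $F=kx$ to the inclusion $W\hookrightarrow W+kx$ to get a second embedding fixing $W$ pointwise but moving $x$, then invokes homogeneity to realize it by some $g\in H$. The extra worry in your last paragraph about the extension being ``pinned down uniquely'' is already dispatched by Lemma~\ref{lem:IIb-5}, since the new vector is chosen inside a subspace $E$ meeting $kv$ trivially.
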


\begin{proof}
Let $x$ be an element of $V$ that does not belong to $W$, and put $W'=W+kx$. Applying Lemma~\ref{lem:IIb-5} with $\sigma$ and $\tau$ the standard inclusions and $F=kx$, we obtain an embedding $\tau' \colon W' \to V$ that is the identity on $W$, and satisfies $\tau'(x) \ne x$. Of course, we also have the standard embedding $W' \to V$. Since $V$ is homogeneous, there is some $g \in G$ such that $\tau'(v)=gv$ for all $v \in W'$. We have thus produced $g \in H$ such that $gx \ne x$, which completes the proof.
\end{proof}

We now establish the statements in the proposition.

\begin{lemma}
We have $\End_G(V)=k$.
\end{lemma}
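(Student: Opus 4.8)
The plan is to reduce the statement to the definable-closure property recorded in Lemma~\ref{lem:IIb-6}, after which the fact that a line-preserving operator is a scalar is routine. Since scalar operators visibly lie in $\End_G(V)$, it suffices to prove the reverse inclusion $\End_G(V) \subseteq k$.

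First I would fix $T \in \End_G(V)$ and a nonzero vector $x \in V$. Let $H \subseteq G$ be the subgroup fixing $x$; since elements of $G$ act linearly, $H$ is exactly the subgroup fixing every vector of the line $kx$, so Lemma~\ref{lem:IIb-6}, applied to the finite dimensional subspace $W = kx$, gives $V^H = kx$. For any $h \in H$ we have $h(Tx) = T(hx) = Tx$ by $G$-equivariance of $T$, so $Tx \in V^H = kx$. Hence there is a scalar $\lambda_x \in k$ with $Tx = \lambda_x x$.

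Next I would check that $\lambda_x$ does not depend on $x$. If $x, y \in V$ are nonzero and linearly independent, then $\lambda_{x+y}(x+y) = T(x+y) = \lambda_x x + \lambda_y y$ forces $\lambda_x = \lambda_{x+y} = \lambda_y$; if instead $y = cx$ with $c \neq 0$, then $\lambda_y y = Ty = c\,Tx = \lambda_x(cx) = \lambda_x y$ gives $\lambda_y = \lambda_x$ directly. (Linearly independent pairs exist since $V$ is infinite dimensional.) Therefore $T$ equals a single scalar $\lambda$ times $\id$, which proves $\End_G(V) = k$.

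I do not expect a genuine obstacle here: the real content has already been extracted in Lemma~\ref{lem:IIb-5} (the refinement of f-injectivity that keeps a chosen finite dimensional subspace disjoint from the extended image) and Lemma~\ref{lem:IIb-6} (definable closure of finite dimensional subspaces). The only point needing a little care is to observe that Lemma~\ref{lem:IIb-6}, though stated for all finite dimensional $W$, is invoked here only in the one-dimensional case $W = kx$, where its hypotheses are automatic.
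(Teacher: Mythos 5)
Your proof is correct and follows exactly the paper's argument: apply Lemma~\ref{lem:IIb-6} to $W=kx$ to see every nonzero vector is an eigenvector of $T$, then conclude $T$ is a scalar. You simply spell out the final "all eigenvalues agree" step more explicitly than the paper does.
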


\begin{proof}
Suppose $f \colon V \to V$ is a $G$-equivariant map. Let $x \in V$ be non-zero, and let $H \subset G$ be the stabilizer of $x$. Then $V^H=kx$ by Lemma~\ref{lem:IIb-6}. Since $f(V^H) \subset V^H$, we see that $x$ is an eigenvetor of $f$. Since this holds for all $x \in V$, it follows that $f$ is multiplication by a scalar.
\end{proof}

\begin{lemma}
Let $W=\ker(\omega')$. Then $W$ and $V/W$ are irreducible representations of $G$.
\end{lemma}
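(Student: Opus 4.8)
The approach is uniform: for each of $W$ and $V/W$ I will show that the $G$-orbit of an arbitrary nonzero vector spans the whole space, which immediately gives irreducibility (a nonzero subrepresentation contains a nonzero vector, hence its orbit, hence its span). The tool for producing elements of $G$ is the homogeneity of $V$ as an ind-object of $\cC$ from the remark following Proposition~\ref{prop:IIb-4}: \emph{if $x,y\in V$ are nonzero, $\omega(x,x)=\omega(y,y)$, $\omega'(x,x)=\omega'(y,y)$, and $x,y$ lie either both in $W=\fN(V)$ or both outside $W$, then there is $g\in G$ with $gx=y$.} Indeed, the map $kx\to ky$ sending $x$ to $y$ is then an isomorphism of biquadratic spaces, and the extra condition on $\fN$ in the definition of a morphism of $\cC$ is automatic, since $kx\cap\fN(V)$ equals $kx$ in the first case and $0$ in the second (and likewise for $y$).

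Before that I record two facts. First, $W$ is infinite dimensional — otherwise its total orthogonal complement would have finite codimension in $V$, contradicting that it vanishes — and $\omega\vert_W$ is non-degenerate: a vector of $W$ that is $\omega$-orthogonal to $W$ is also $\omega'$-orthogonal to $W$ (as $W\subset\ker\omega'$), so it lies in the total orthogonal complement of $W$ and is therefore $0$. Likewise $V/W$ is infinite dimensional, because $\omega'$ has infinite rank ($V$ being universal), and it carries the non-degenerate quadratic form $\ol q$ induced by $\omega'$, characterized by $\ol q(\ol y)=\omega'(y,y)$ for any lift $y$ of $\ol y$. Second, a geometric fact: if $q$ is a non-degenerate quadratic form on a $k$-vector space $U$ with $\dim U\ge 2$, then for every $c\in k$ the level set $\{u\in U:q(u)=c\}$ spans $U$. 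This reduces to the finite-dimensional case (any finite subset of $U$ sits inside a finite-dimensional non-degenerate subspace), where it is the statement that the affine quadric $V(q-c)$ is not contained in any hyperplane $V(\ell)$: otherwise $\ell\in\sqrt{(q-c)}=(q-c)$ — the ideal being radical because $q-c$ is squarefree for non-degenerate $q$ of rank $\ge 2$ — contradicting a degree count.

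For $W$: let $W_0$ be a nonzero $G$-subrepresentation of $W$ and pick $0\ne x\in W_0$ with $c=\omega(x,x)$. For every $y\in W$ with $\omega(y,y)=c$ — the two $\omega'$-values both vanish — homogeneity gives $g\in G$ with $gx=y$, so $y\in W_0$. Hence $W_0$ contains $\{y\in W:\omega(y,y)=c\}$, which spans $W$ by the geometric fact, so $W_0=W$. For $V/W$: let $\ol W_0$ be a nonzero $G$-subrepresentation, choose a nonzero $\ol x\in\ol W_0$ and a lift $x\in V\setminus W$, and set $a=\omega(x,x)$, $b=\omega'(x,x)=\ol q(\ol x)$. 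Given any nonzero $\ol y$ with $\ol q(\ol y)=b$, I first arrange the lift: on $W$ the function $w\mapsto\omega(y_0+w,y_0+w)$ (for a fixed lift $y_0$) is a polynomial which is non-constant — constancy would force $\omega\vert_W\equiv 0$ — hence surjective onto $k$, so some lift $y$ satisfies $\omega(y,y)=a$; since $\ol y\ne 0$ this $y$ lies outside $W$, and $\omega'(y,y)=\ol q(\ol y)=b$. Homogeneity now gives $g\in G$ with $gx=y$, hence $g\ol x=\ol y\in\ol W_0$. Thus $\ol W_0$ contains $\{\ol y:\ol q(\ol y)=b\}\setminus\{0\}$, which spans $V/W$ by the geometric fact, so $\ol W_0=V/W$.

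The only genuinely delicate point is the $V/W$ argument: since $\omega$ does not descend to $V/W$, one cannot directly compare biquadratic structures on one-dimensional quotients, and the device is to observe that the $\omega$-value on a lift can be adjusted to any prescribed scalar (using non-degeneracy of $\omega\vert_W$) while $\omega'$-values are lift-independent, so that the homogeneity criterion still applies. The spanning statements for quadric level sets are then routine, and everything else is bookkeeping.
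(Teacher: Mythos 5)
Your proof is correct. It rests on the same central tool as the paper's argument --- the homogeneity of $V$ as an ind-object of $\cC$, applied to one-dimensional subspaces, which shows that the $G$-orbit of a nonzero $x$ contains every $y$ with the same values of $\omega(\cdot,\cdot)$ and $\omega'(\cdot,\cdot)$ and the same position relative to $W=\fN(V)$ --- but the two proofs diverge in how they conclude. The paper argues by contradiction: given a proper subrepresentation $M$ and a functional $\lambda$ vanishing on it, it invokes Lemma~\ref{lem:IIb-3} and Corollary~\ref{cor:univ} (i.e.\ the infinite-strength/universality machinery) to manufacture a vector $y$ with the same invariants as $x\in M$ but $\lambda(y)=1$, whence $y\in M$ yet $y\notin\ker\lambda$. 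You instead show directly that the orbit --- which is exactly a level set of the relevant quadratic form, intersected with $W$ or with $V\setminus W$ --- spans, via the elementary fact that a level set of a nondegenerate quadratic form of rank $\ge 2$ over an algebraically closed field is not contained in a hyperplane (squarefreeness of $q-c$ plus the Nullstellensatz). The two are logically dual ways of saying the orbit meets the complement of every hyperplane, and both are valid; your route trades the strength machinery for classical quadric geometry, at the cost of two extra verifications you correctly supply: nondegeneracy of $\omega\vert_W$ (from the vanishing of the total orthogonal complement of $W$) and, in the $V/W$ case, the observation that $\omega'$-values are lift-independent while the $\omega$-value of a lift can be adjusted to any prescribed scalar. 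That last point is the genuinely delicate step, and your surjectivity argument for $w\mapsto\omega(y_0+w,y_0+w)$ (non-constant since $\omega\vert_W\not\equiv 0$, hence surjective over an algebraically closed field) handles it correctly.
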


\begin{proof}
We first show that $W$ is irreducible. Suppose by way of contradiction that $M$ is a proper non-zero subrepresentation of $W$. Let $x$ be a non-zero element of $M$ and let $\lambda$ be a non-zero functional on $W$ that vanishes on $M$. Let $E$ be a finite dimensional subspace of $W$ such that $\omega$ has high strength on $E$ and $\lambda$ is non-zero on $E$. By Corollary~\ref{cor:univ} we can find $y \in E$ such that $\omega(y,y)=\omega(x,x)$ and $\lambda(y)=1$. Since $kx$ and $ky$ are isomorphic objects of $\cC$ (via $x \mapsto y$) and $V$ is homogeneous, there is $g \in G$ such that $gx=y$, and so $y \in M$. But this is a contradiction since $\lambda(y)=1$. Thus $W$ is irreducible.

We now show that $W$ is the unique proper non-zero subrepresentation of $V$. The argument is quite similar. Suppose that $M$ is a proper subrepresentation of $V$ that is not contained in $W$. Let $x$ be an element of $M$ that does not belong to $W$, and let $\lambda$ be a non-zero functional on $V$ that vanishes on $M$. Applying Lemma~\ref{lem:IIb-3}, let $E$ be a finite dimensional subspace of $V$ such that $E \cap W=0$, the forms $\omega$ and $\omega'$ have high strength on $E$, and $\lambda \ne 0$ on $E$. By Corollary~\ref{cor:univ}, we can find $y \in E$ such that
\begin{displaymath}
\omega(y,y)=\omega(x,x), \qquad \omega'(y,y)=\omega'(x,x), \qquad \lambda(y)=1.
\end{displaymath}
Since $kx$ and $ky$ are isomorphic objects of $\cC$ (via $x \mapsto y$) and $V$ is homogeneous, there is $g \in G$ such that $gx=y$, and so $y \in M$. But this is a contradiction since $\lambda(y)=1$. We thus see that $W$ is the unique non-zero proper subrepresentation of $V$. In particular, $V/W$ is irreducible.
\end{proof}

Define an object $E_{n,m,\ell}$ of $\cC$ as follows:
\begin{itemize}
\item The space has basis $x_1, \ldots, x_n, y_1, \ldots, y_m, z_1, \ldots, z_{\ell}$.
\item The $x_i$'s and $y_i$'s are orthonormal under $\omega$, while the $z_i$'s span the kernel of $\omega$.
\item The $z_i$'s are orthonormal under $\omega'$, while the $x_i$'s and $y_i$'s span the kernel of $\omega'$.
\item The space $\fN(E_{n,m,\ell})$ is spanned by the $x_i$'s.
\end{itemize}

\begin{lemma} \label{lem:IIb-7}
Every $n$-dimensional space in $\cC$ embeds into $E_{2n,2n,2n}$.
\end{lemma}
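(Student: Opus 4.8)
The plan is to construct the embedding directly, exploiting that $E := E_{2n,2n,2n}$ is an orthogonal sum of simple pieces. Write $X$, $Y$, $Z$ for the spans of the $x_i$'s, $y_i$'s, $z_i$'s respectively. Then $\omega_E$ restricts to a non-degenerate (hence split, as $k$ is algebraically closed) form of rank $2n$ on each of $X$ and $Y$, makes $X$ and $Y$ orthogonal, and vanishes on $Z$; dually $\omega'_E$ vanishes on $X\oplus Y$ and restricts to a split form of rank $2n$ on $Z$; and $\fN(E)=X$. Consequently, giving an embedding $W\to E$ in $\cC$ amounts to giving linear maps $a\colon W\to X$, $v\colon W\to Y$, $b\colon W\to Z$ such that $\langle a(w),a(w')\rangle+\langle v(w),v(w')\rangle=\omega_W(w,w')$ and $\langle b(w),b(w')\rangle=\omega'_W(w,w')$ for all $w,w'$, such that $(a,v,b)$ is jointly injective, and such that $v(w)=b(w)=0$ exactly when $w\in\fN(W)$.

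To produce these, set $N=\fN(W)$ and $K=\ker(\omega'_W)\supseteq N$, and fix vector-space decompositions $K=N\oplus M$ and $W=K\oplus C$; since $K=\ker(\omega'_W)$, the form $\omega'_W$ vanishes on $K$ and has rank at most $\dim C\le n$. I would then take: for $b$, any map $C\to Z$ realizing $\omega'_W\vert_C$ (possible since $\dim Z=2n\ge\dim C$), extended by zero on $K$, so that $K\subseteq\ker b$ and $\langle b\cdot,b\cdot\rangle=\omega'_W$ on all of $W$; for $v$, the composite of the quotient $W\twoheadrightarrow W/N$ with an injection of $W/N$ (dimension $n-\dim N\le n$) into a maximal totally isotropic subspace of $Y$ — which has dimension $n$ since $\omega_E\vert_Y$ is split of rank $2n$ — so that $\ker v=N$ and $v(W)$ is totally isotropic, whence $\langle v\cdot,v\cdot\rangle=0$; and for $a$, a map $W\to X$ whose associated form is exactly $\omega_W$ and which is injective on $N$. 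Granting this last map, all three conditions follow at once: the form identities because $\langle v\cdot,v\cdot\rangle=0$ and $\omega'_W$ kills $\ker b\supseteq K$; joint injectivity because $\ker a\cap\ker v\cap\ker b\subseteq\ker a\cap N=0$; and $v(w)=b(w)=0\iff w\in\ker v\cap\ker b=N$, since $\ker v=N\subseteq K\subseteq\ker b$. As $W$ was an arbitrary $n$-dimensional object of $\cC$, this proves the lemma once $a$ is constructed.

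The construction of $a$ is the only delicate step. I would first realize $\omega_W\vert_N$ by an injective map $a_0\colon N\hookrightarrow X$: this is possible over an algebraically closed field into a split space of dimension $\ge 2\dim N$, by diagonalizing $\omega_W\vert_N$, sending the anisotropic directions to distinct coordinate vectors and the radical directions to linearly independent, mutually orthogonal isotropic vectors built from the remaining coordinates. Then I would enlarge $a_0(N)$ to a non-degenerate subspace $X_1\subseteq X$ with $\dim X_1\le 2\dim N$ and split $X=X_1\perp X_1^{\perp}$, noting $\dim X_1^{\perp}\ge 2n-2\dim N\ge n-\dim N$. For $\rho$ in the complement $R:=M\oplus C$ of $N$ in $W$, the rule $a_0(\nu)\mapsto\omega_W(\nu,\rho)$ extends to a linear functional on $X_1$, represented by a unique vector $\ell_\rho\in X_1$; I set $a(\rho)=\ell_\rho+c(\rho)$ where $c\colon R\to X_1^{\perp}$ is chosen to realize the symmetric form $\omega_W(\rho,\rho')-\langle\ell_\rho,\ell_{\rho'}\rangle$ on $R$ (of rank at most $\dim R=n-\dim N\le\dim X_1^{\perp}$). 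Taking $a\vert_N:=a_0$ defines $a$ on all of $W$, and checking the blocks $N\times N$, $N\times R$, $R\times R$ separately gives $\langle a\cdot,a\cdot\rangle=\omega_W$, while $a\vert_N=a_0$ is injective by construction. The main thing to get right here is the bookkeeping: confirming that the factor $2n$ in every slot of $E_{2n,2n,2n}$ always leaves enough room for these standard facts about symmetric bilinear forms over an algebraically closed field (injective or kernel-controlled realizations, totally isotropic subspaces of the required dimension, and enlargement of a subspace to a non-degenerate one of at most twice its dimension). I do not expect any conceptual obstacle beyond this.
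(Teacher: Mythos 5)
Your proof is correct and follows essentially the same route as the paper's: both build the embedding componentwise into the three orthogonal blocks $X$, $Y$, $Z$, using an $\omega$-compatible map to $X$, an isotropic injection of $W/\fN(W)$ into $Y$ to control the preimage of $\fN$, and an $\omega'$-compatible map to $Z$ killing $\ker(\omega')$. The only divergence is your hands-on two-stage construction of $a$, which the paper replaces by simply invoking the standard fact that any $n$-dimensional quadratic space embeds isometrically into the non-degenerate quadratic space of dimension $2n$; applying that fact to all of $W$ (not just to $N$) would shorten your argument considerably.
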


\begin{proof}
Let $W$ be an $n$-dimensional object of $\cC$. Let $w_1, \ldots, w_n$ be a basis of $W$ such that $w_1, \ldots, w_r$ is a basis of $\fN(W)$. Choose an embedding $\alpha \colon W \to E_{2n,0,0}$ that is compatible with $\omega$. (Here we are using the fact that any $n$-dimensional quadratic space embeds into the non-degenerate quadratic space of dimension $2n$.) Let $y'_{r+1}, \ldots, y'_n$ span an $n-r$ dimensional isotropic subspace of $E_{0,2n,0}$, and define a linear map $\beta \colon W \to E_{0,2n,0}$ by $\beta(w_i)=0$ for $1 \le i \le r$ and $\beta(w_i)=y'_i$ for $r<i\le n$. Finally, let $\gamma \colon W/\fN(W) \to E_{0,0,2n}$ be an embedding compatible with $\omega'$. Now let $\delta \colon W \to E_{2n,2n,2n}$ be the sum of $\alpha$, $\beta$, and $\gamma$. One readily verifies that $\delta$ is compatible with $\omega$ and $\omega'$, and satisfies $\delta^{-1}(\fN(E_{2n,2n,2n}))=\fN(W)$, as required.
\end{proof}

\begin{lemma}
The group $G$ is linearly oligomorphic.
\end{lemma}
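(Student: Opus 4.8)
The plan is to exhibit, for each fixed $d\ge 0$, a single finite-dimensional subspace $E\subset V$ into which every $d$-dimensional subspace of $V$ can be moved by an element of $G$; concretely, $E$ will be an embedded copy of the object $E_{2d,2d,2d}$ of $\cC$. The two engines are Lemma~\ref{lem:IIb-7}, which provides one fixed $\cC$-object receiving all $d$-dimensional objects of $\cC$, and the homogeneity of $V$ as an ind-object of $\cC$ (Proposition~\ref{prop:IIb-4} together with Proposition~\ref{prop:fraisse}).

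First I would note that the automorphism group of $V$ as an object of $\cC$ is exactly $G$: any automorphism of the biquadratic space $(V,\omega,\omega')$ preserves $\ker(\omega')=\fN(V)$, so the extra structure imposes no constraint. Next, since the zero object is initial in $\cC$, the f-injective ind-object $V$ is universal, so there is a $\cC$-embedding $E_{2d,2d,2d}\to V$; let $E$ be its ($6d$-dimensional) image, a genuine sub-object of $\cC$, meaning $E\cap\fN(V)$ equals the transported copy of $\fN(E_{2d,2d,2d})$. Now take an arbitrary $U\in\Gr_d(V)$ and make it an object of $\cC$ by setting $\fN(U)=U\cap\fN(V)$ (which indeed lies in $\ker(\omega'\vert_U)$); the inclusion $\iota\colon U\hookrightarrow V$ is then a morphism of $\cC$. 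By Lemma~\ref{lem:IIb-7} there is a $\cC$-embedding $U\to E_{2d,2d,2d}$, and composing with the fixed embedding yields a $\cC$-embedding $\delta\colon U\to V$ with $\delta(U)\subseteq E$. Applying homogeneity to the two embeddings $\iota$ and $\delta$ of $U$ into $V$ produces $g\in\Aut_{\cC}(V)=G$ with $\delta=g\circ\iota$, so $gU=\delta(U)\subseteq E$. Hence $\Gr_d(E)\to\Gr_d(V)/G$ is surjective, and since $d$ is arbitrary, $G$ is linearly oligomorphic.

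I do not expect a serious obstacle here: all of the difficulty has been absorbed into the prior results (the construction of $\cC$, the f-injectivity in Proposition~\ref{prop:IIb-4}, and the uniform embedding statement in Lemma~\ref{lem:IIb-7}). The only points requiring care are bookkeeping with the extra structure $\fN$ — verifying that the two maps $U\to V$ really are morphisms in $\cC$, which forces the choice $\fN(U)=U\cap\fN(V)$ and requires the fixed image $E$ of $E_{2d,2d,2d}$ to satisfy $\fN(E)=E\cap\fN(V)$ — and the remark that biquadratic automorphisms automatically respect this structure, so that $\Aut_{\cC}(V)=G$.
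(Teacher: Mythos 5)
Your proof is correct and follows essentially the same route as the paper: embed a fixed copy of $E_{2d,2d,2d}$ into $V$ using universality, funnel an arbitrary $d$-dimensional subspace into it via Lemma~\ref{lem:IIb-7}, and invoke homogeneity of $V$ as an ind-object of $\cC$ to produce the required $g\in G$. The extra bookkeeping you supply (that $\fN(U)=U\cap\fN(V)$ makes the inclusion a $\cC$-morphism, and that $\Aut_{\cC}(V)=G$ because biquadratic automorphisms preserve $\ker(\omega')$) is left implicit in the paper but is exactly the right justification.
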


\begin{proof}
Let $E=E_{2n,2n,2n}$ and fix an embedding $E \subset V$, which is possible since $V$ is a universal ind-object of $\cC$. If $W \subset V$ is $n$-dimensional then there is an embedding $W \to E$ in $\cC$ (Lemma~\ref{lem:IIb-7}). Since $V$ is homogeneous, it follows that there is $g \in G$ such that $gW \subset E$. We thus see that the map $\Gr_n(E) \to \Gr_n(V)/G$ is surjective, as required.
\end{proof}

\subsection{Type IIc} \label{ss:IIc}

This class is introduced in the following first-order definition.

\begin{definition}
A biquadratic space $V$ is of \defn{Type~IIc} if it is universal and there exists $p \in \bP^1$ such that:
\begin{enumerate}
\item The nullspace of $\omega(p)$ is non-zero.
\item The total orthogonal complement of $\ker(\omega(p))$ is itself.
\item We have $\omega_x+\omega'_y=0$ only if $x$ and $y$ are linearly dependent.
\item We have $\ker(V)=0$.
\end{enumerate}
The \defn{parameter} of $V$ is $p$.
\end{definition}

While the definition looks quite similar to the Type~IIa case, these spaces are more similar to those of Type~IIb. We note that the parameter $p$ is unique. Indeed, suppose $p=\infty$ for simplicity, and let us show that $\omega$ has zero nullspace. Let $x \in \ker(\omega)$. Then $x$ belongs to the total orthogonal complement of $\ker(\omega')$, which is $\ker(\omega')$. Thus $x \in \ker(V)$, which vanishes.

\begin{construction} \label{con:IIc}
Let $V$ be a space with basis $\{e_i, f_i\}_{i \ge 1}$. Let $B$ be an $\infty \times \infty$ matrix such that every non-trivial linear combination of its columns has infinitely many non-zero entries. We define $\omega$ and $\omega'$ on $V$ by
\begin{displaymath}
\begin{pmatrix} 0 & B^t \\ B & 0 \end{pmatrix} \qquad
\begin{pmatrix} 0 & 0 \\ 0 & 1 \end{pmatrix},
\end{displaymath}
using our usual conventions. The nullspace $W$ of $\omega'$ is the span of the $e_i$'s. One readily verifies that $V$ is Type~IIc with parameter $\infty$. Twisting by $\GL(2)$ realizes all parameter values.
\end{construction}

Our next goal is to prove the following result.

\begin{proposition} \label{prop:IIc-unique}
Two Type~IIc spaces are isomorphic if and only if their parameters agree. In particular, a Type~IIc space is linearly $\omega$-categorical.
\end{proposition}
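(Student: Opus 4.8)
The plan is to follow the template used for Type~IIb. The ``only if'' direction is immediate: an isomorphism of biquadratic spaces carries the pencil spanned by $\omega,\omega'$ to the pencil of the target compatibly with the $\bP^1$-parameter, hence preserves the unique point at which the form degenerates. For the converse, twisting by $\GL(2)$ reduces us to proving that any two Type~IIc spaces with parameter $\infty$ are isomorphic. So fix such a $V$: then $W:=\ker(\omega')$ is non-zero, $W$ equals its own total orthogonal complement (in particular $\omega|_W=0$), the relation $\omega_x+\omega'_y=0$ forces $x$ and $y$ to be linearly dependent, and $\ker(V)=0$. I would then introduce the category $\cC$ whose objects are finite-dimensional biquadratic spaces $W$ equipped with a subspace $\fN(W)\subseteq\ker(\omega'|_W)$ \emph{on which $\omega$ vanishes identically}, with morphisms the biquadratic embeddings $i$ satisfying $i^{-1}(\fN)=\fN$, and regard $V$ as an ind-object of $\cC$ with $\fN(V)=\ker(\omega')$. (The isotropy clause is forced, since the image of $\fN(W)$ under any embedding into $V$ must lie in the $\omega$-isotropic subspace $\fN(V)$.) The zero object is initial in $\cC$, so an f-injective ind-object of $\cC$ is universal and hence unique up to isomorphism by Proposition~\ref{prop:fraisse}; thus it suffices to prove that $V$ is f-injective in $\cC$, after which linear $\omega$-categoricity follows exactly as in the Type~Ia case.

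The heart of the argument is the analog of Lemma~\ref{lem:IIb-1}: if $\omega_x+\omega'_y=0$ then $x=0$ and $y\in\fN(V)$. Indeed, evaluating at $v\in\fN(V)=\ker(\omega')$ gives $\omega(x,v)=0$, so $x$ lies in the total orthogonal complement of $\fN(V)$, which is $\fN(V)$; hence $\omega'_x=0$. If $x\neq0$, then by condition (c) $y$ is a scalar multiple of $x$, so $\omega_x$ is a scalar multiple of $\omega'_x=0$, forcing $x\in\ker(\omega)\cap\ker(\omega')=\ker(V)=0$, a contradiction. So $x=0$, and then $\omega'_y=0$, i.e.\ $y\in\fN(V)$. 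From this I would extract the two facts needed later: $\fN(V)$ is infinite-dimensional (if it had dimension $d$ its total orthogonal complement, cut out by at most $2d$ linear conditions, would be infinite-dimensional, yet it equals $\fN(V)$); and for any $x_1,\dots,x_n\in V$, a relation among the functionals $\omega(x_i,-)$ together with some $\omega'(x_j,-)$ forces the corresponding combination of the $x_i$ to vanish and the corresponding combination of the $x_j$ to lie in $\fN(V)$. With this lemma in hand, the strength lemmas corresponding to Lemmas~\ref{lem:IIb-2} and~\ref{lem:IIb-3} carry over verbatim, using only the universality of $V$.

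To prove f-injectivity I would imitate Proposition~\ref{prop:IIb-4}, extending an embedding $\tau\colon W\to V$ over a vector $\ol y\in W'\setminus W$ in two cases. When $\fN(W')=\fN(W)$, the argument is exactly Case~1 there: using the strength lemma, choose a finite-dimensional $E\subseteq V$ avoiding $\fN(V)+\operatorname{span}(\tau(\ol x_1),\dots,\tau(\ol x_n))$, on which $\omega$ and $\omega'$ have high collective strength and the functionals $\omega(\tau(\ol x_i),-)$ and the relevant $\omega'(\tau(\ol x_i),-)$ are linearly independent; then Corollary~\ref{cor:univ} produces $y\in E$ matching the finitely many values $\omega(\ol y,\ol y)$, $\omega'(\ol y,\ol y)$, $\omega(\ol x_i,\ol y)$, $\omega'(\ol x_i,\ol y)$, and $\ol y\mapsto y$ is the desired morphism. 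When $\fN(W)\subsetneq\fN(W')$ and $\ol y\in\fN(W')\setminus\fN(W)$, the situation is actually \emph{easier} than for Type~IIb: because $\fN(W')$ is $\omega$-isotropic, all the form values to be matched vanish, so one needs only a $y\in\fN(V)$ solving the linear system $\omega(\tau(\ol x_i),y)=\omega(\ol x_i,\ol y)$ over the indices with $\ol x_i\notin\fN(W)$. For those indices the functionals $\omega(\tau(\ol x_i),-)$ restrict to linearly independent functionals on $\fN(V)$ --- a relation would put a nonzero combination of the $\tau(\ol x_i)$ into $\fN(V)$, contradicting $\tau^{-1}(\fN(V))=\fN(W)$ --- and since $\fN(V)$ is infinite-dimensional, a solution can be chosen outside $\operatorname{span}(\tau(\ol x_1),\dots,\tau(\ol x_n))$, which makes $\ol y\mapsto y$ a morphism in $\cC$.

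The main point to get right --- the only genuine departure from the Type~IIb treatment --- is that $\omega$ collapses to $0$ on $\fN(V)$ rather than being non-degenerate there. This is what forces the isotropy condition into the definition of $\cC$ (so that universality is not violated), and it is also what makes Case~2 of the f-injectivity argument purely linear, resting on the independence of the functionals $\omega(\tau(\ol x_i),-)$ on $\fN(V)$ instead of on a strength input. Aligning these two observations is the whole subtlety; the rest transcribes from Type~IIb.
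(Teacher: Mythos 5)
Your proposal is correct and follows essentially the same route as the paper: the same category $\cC$ with the $\omega$-isotropy clause on $\fN(W)$, reduction to f-injectivity via Proposition~\ref{prop:fraisse}, and the same two-case extension argument in which Case~2 becomes purely linear because $\fN(V)$ is $\omega$-isotropic. Your explicit verification of the Lemma~\ref{lem:IIb-1} analog (using condition (c) and $\ker(V)=0$ where Type~IIb used the vanishing of the total orthogonal complement) is a detail the paper only asserts, and you supply it correctly.
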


Twisting by $\GL(2)$, it suffices to treat the case where the parameter is $\infty$. Thus fix a Type~IIc space $V$ with parameter $\infty$. Consider the following category $\cC$:
\begin{itemize}
\item An object is a finite biquadratic space $W$ equipped with a subspace $\fN(W)$ of $\ker(\omega')$ that is isotropic for $\omega$.
\item A morphism $i \colon W \to W'$ is a map of biquadratic spaces such that $i^{-1}(\fN(W'))=\fN(W)$.
\end{itemize}
We regard $V$ as an ind-object of $\cC$, with $\fN(V)=\ker(\omega')$. As in the Type~IIb case, the above proposition follows from the subsequent one.

\begin{proposition}
$V$ is an f-injective ind-object of $\cC$.
\end{proposition}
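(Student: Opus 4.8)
```latex
The plan is to mimic the structure of the Type~IIb argument (Proposition~\ref{prop:IIb-4}), adapting each step to account for the extra structure present in the Type~IIc case, namely that $\fN(W)$ is required to be $\omega$-isotropic. I would first record the analogue of Lemma~\ref{lem:IIb-1}: if $x,y \in V$ satisfy $\omega_x+\omega'_y=0$, then, since $\omega_x = -\omega'_y$ vanishes on $\fN(V)$ (as does $\omega'_y$ by definition of $\fN(V)=\ker(\omega')$), the vector $x$ lies in the total orthogonal complement of $\fN(V)$. By Definition hypothesis (b) this complement is $\fN(V)$ itself, so $x \in \fN(V)$; combined with hypothesis (c), $x$ and $y$ are linearly dependent, and since $x \in \ker(\omega')$ while $\omega'_y = -\omega_x$, a short check gives that $y$ is a scalar multiple of $x$ (or $\omega_x = 0$, forcing $x \in \ker(V) = 0$ by (d)). The upshot is a clean description of the linear dependencies among the functionals $\omega(x_i,-)$, $\omega'(x_i,-)$, which is what the strength/independence bookkeeping in the f-injectivity proof relies on.

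Next I would establish the Type~IIc versions of Lemmas~\ref{lem:IIb-2} and~\ref{lem:IIb-3}: for any $s$ and any finite dimensional $F$ and finitely many independent functionals, one can find a finite dimensional $E \subset V$ with $E \cap (\fN(V)+F)=0$, with the chosen functionals still independent on $E$, and with $\omega,\omega'$ of collective strength $\ge s$ on $E$. The proofs are essentially identical to the Type~IIb ones, using universality of $V$ to embed a suitably engineered finite biquadratic space, then cutting down to avoid $\fN(V)+F$ and to restore independence of functionals; I would also note, as in the Type~IIb argument, that $\fN(V)$ is itself infinite dimensional (otherwise its total orthogonal complement, which equals $\fN(V)$, would have finite codimension, contradicting universality), and that $\omega\vert_{\fN(V)}$ has high strength on suitable subspaces --- the key difference being that now $\fN(V)$ is $\omega$-isotropic as a whole is \emph{not} what we want; rather we need that $\omega$ restricted to an $\omega$-isotropic complement behaves well, so I would instead work with pairs: an $\omega$-isotropic part destined for $\fN$ and an $\omega$-nondegenerate part outside. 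This is the technical heart and the step I expect to be the main obstacle: arranging, inside $V$, a finite configuration that simultaneously has the right intersection properties with $\fN(V)$, the right $\omega$-isotropy on the part that will map into $\fN(W')$, and enough strength/independence for Corollary~\ref{cor:univ} to apply.

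With these lemmas in hand, the f-injectivity proof proceeds by the same case division as Proposition~\ref{prop:IIb-4}. Given $\sigma\colon W \hookrightarrow W'$ and $\tau\colon W \to V$ with $\dim W' = \dim W + 1$, pick a basis $\ol{x}_1,\dots,\ol{x}_n$ of $W$ with $\ol{x}_1,\dots,\ol{x}_m$ a basis of $\fN(W)$, and set $x_i = \tau(\ol{x}_i)$. In Case~1, where $\fN(W)=\fN(W')$, the new vector $\ol{y}$ lies outside $W$; choose $E \subset V$ via the Type~IIc analogue of Lemma~\ref{lem:IIb-3} with $E \cap (\fN(V)+\operatorname{span}(x_1,\dots,x_n))=0$, high strength, and the relevant functionals $\omega(x_i,-)$ (all $i$) and $\omega'(x_i,-)$ ($m<i\le n$) independent on $E$ --- independence on all of $V$ coming from the Lemma~\ref{lem:IIb-1} analogue above --- then apply Corollary~\ref{cor:univ} to find $y \in E$ matching all the required inner products and set $\tau'(\ol{y})=y$. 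In Case~2, where $\fN(W')\supsetneq\fN(W)$, the new vector $\ol{y}$ lies in $\fN(W')$ and must be $\omega$-orthogonal to $\fN(W)$ and to itself; here I would choose $E$ inside $\fN(V)$ that is moreover $\omega$-isotropic, with $E \cap \operatorname{span}(x_1,\dots,x_n)=0$, with $\omega(x_i,-)$ independent on $E$, then use Corollary~\ref{cor:univ} (for the single form $\omega$, since $\omega'$ vanishes identically on $\fN(V)$) to find $y \in E$ with $\omega(x_i,y)=\omega(\ol{x}_i,\ol{y})$ for all $i$; the constraint $\omega(y,y)=\omega(\ol{y},\ol{y})=0$ is automatic since $E$ is $\omega$-isotropic and $\ol{y}$ is too. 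Setting $\tau'(\ol{y})=y$ gives a morphism of $\cC$ since $y \in \fN(V)$ is $\omega$-orthogonal to everything in $E \supseteq$ the relevant span. Finding such an $\omega$-isotropic $E$ of sufficiently large dimension inside $\fN(V)$, with the non-vanishing of finitely many functionals, is again a Zariski-genericity argument on an appropriate Grassmannian, exactly as in the corresponding paragraph of the Type~IIb proof.
```
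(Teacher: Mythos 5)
Your proposal follows the paper's proof essentially verbatim: the same three preliminary lemmas carried over from the Type~IIb case, the same two-case division, and in Case~2 the same observation that all quadratic constraints vanish identically, so only the linear conditions $\omega(x_i,y)=\omega(\ol{x}_i,\ol{y})$ for $m<i\le n$ need to be arranged. The only wrinkles are cosmetic: the worry in your middle paragraph dissolves because $\fN(V)$ is automatically totally $\omega$-isotropic (its total orthogonal complement is itself), and consequently in Case~2 the functionals $\omega(x_i,-)$ with $i\le m$ vanish identically on $\fN(V)$, so their independence on $E$ can only be demanded for $m<i\le n$, exactly as the paper does.
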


\begin{proof}
We first note that Lemmas~\ref{lem:IIb-1},~\ref{lem:IIb-2}, and~\ref{lem:IIb-3} extend without difficulty to the present case. We now follow the proof of Proposition~\ref{prop:IIb-4} from the Type~IIb case. Let $W$, $W'$, $\sigma$, $\tau$, $\ol{x}_i$, and $x_i$ be as in that proof. We again proceed in two cases. The first case, where $\fN(W)=\fN(W')$ follows the exact argument from before.

We now consider the second case, where $\fN(W) \ne \fN(W')$. Let $\ol{y}$ be a vector in $\fN(W')$ that does not belong to $\fN(W)$. Then $\ol{x}_1, \ldots, \ol{x}_n, \ol{y}$ forms a basis for $W'$, and $\ol{x}_1, \ldots, \ol{x}_m, \ol{y}$ forms a basis for $\fN(W')$. Let $E$ be a finite dimensional subspace of $\fN(V)$ such that
\begin{itemize}
\item $E \cap \operatorname{span}(x_1, \ldots, x_n)=0$.
\item The linear forms $\omega(x_i, -)$, for $m < i \le n$, are linearly independent on $E$.
\end{itemize}
Note that if $\omega_x$ restricts to~0 on $\fN(V)$ then $x$ belongs to the total orthogonal complement of $\fN(V)$, which is $\fN(V)$. Thus the forms appearing in the second item above are linearly independent on $\fN(V)$. We also note that $\fN(V)$ is infinite dimensional, as otherwise its total orthogonal complement, which is itself would have finite codimension, and so $\omega'$ would have finite rank, contradicting universality. It follows that such a space $E$ exists. Now, choose $y \in E$ such that
\begin{displaymath}
\omega(x_i, y) = \omega(\ol{x}_i, \ol{y}) \quad \text{for $m < i \le n$.}
\end{displaymath}
Of course, we also have
\begin{align*}
\omega(y, y) &= \omega(\ol{y}, \ol{y}) &
\omega(x_i, y) &= \omega(\ol{x}_i, \ol{y}) \quad \text{for $1 \le i \le m$} \\
\omega'(y,y) &= \omega'(\ol{y}, \ol{y}) &
\omega'(x_i, y) &= \omega'(\ol{x}_i, \ol{y}) \quad \text{for $1 \le i \le n$}
\end{align*}
as everything above vanishes. We can thus define $\tau'$ by putting $\tau'(\ol{y})=y$.
\end{proof}

We require one more result:

\begin{proposition}
Let $V$ be a Type~IIc space, and let $G=\Aut(V)$. Then $G$ is linearly oligomorphic, $V$ has length two as a representation of $G$, and $\End_G(V)=k$.
\end{proposition}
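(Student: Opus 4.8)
The plan is to run the same argument used for Type~IIb. By the preceding proposition $V$ is an f-injective ind-object of $\cC$, hence (Proposition~\ref{prop:fraisse}) homogeneous in $\cC$; and twisting by $\GL(2)$ we may assume the parameter is $\infty$, so that $W:=\ker(\omega')=\fN(V)$. We will use repeatedly that $W$ is isotropic for $\omega$: by condition~(b) the total orthogonal complement of $W$ is $W$, and since every vector of $V$ is automatically $\omega'$-orthogonal to $W$, this forces $\omega(W,W)=0$. We also note that the analogues of Lemmas~\ref{lem:IIb-1},~\ref{lem:IIb-2},~\ref{lem:IIb-3}, and~\ref{lem:IIb-5} hold verbatim in the Type~IIc setting, by the same proofs (for Lemma~\ref{lem:IIb-1} one uses conditions~(b)--(d): the total orthogonal complement condition forces the offending $x$ into $\fN(V)$, and then (c)--(d) force it to vanish).

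To show $\End_G(V)=k$, I would first upgrade Lemma~\ref{lem:IIb-6} to the present situation: the strengthened f-injectivity (analogue of Lemma~\ref{lem:IIb-5}) shows that finite-dimensional subspaces of $V$ are definably closed, i.e.\ if $H\subseteq G$ is the pointwise stabilizer of a finite-dimensional subspace $U$ then $V^H=U$. If $f\colon V\to V$ is $G$-equivariant and $x\in V$ is nonzero, applying this with $U=kx$ gives $f(kx)\subseteq kx$, so every vector is an eigenvector of $f$ and hence $f$ is a scalar.

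For the length statement, $W$ is a subrepresentation of $V$. Since $\omega$ vanishes on $W$, for every nonzero $x\in W$ the one-dimensional object $kx$ of $\cC$ (with $\fN(kx)=kx$ and both forms zero, the isotropy being automatic) is, via $x\mapsto y$, isomorphic to $ky$ for any other nonzero $y\in W$; homogeneity of $V$ in $\cC$ therefore makes $G$ act transitively on $W\setminus\{0\}$, so $W$ is irreducible. To see $W$ is the unique proper nonzero subrepresentation, suppose $M$ is a proper subrepresentation not contained in $W$, pick $x\in M\setminus W$ and a functional $\lambda$ vanishing on $M$, and (using the Type~IIc analogue of Lemma~\ref{lem:IIb-3}) choose a finite-dimensional $E\subseteq V$ with $E\cap W=0$, with $\omega$ and $\omega'$ of high strength on $E$, and with $\lambda$ nonzero on $E$. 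By Corollary~\ref{cor:univ} there is $y\in E$ with $\omega(y,y)=\omega(x,x)$, $\omega'(y,y)=\omega'(x,x)$, and $\lambda(y)=1$. As $x,y\notin W$, both $kx$ and $ky$ have vanishing $\fN$, so $kx\cong ky$ in $\cC$; homogeneity gives $g\in G$ with $gx=y\in M$, contradicting $\lambda(y)=1$. Since $W\ne 0$ (condition~(a)) and $W\ne V$ (else $\omega'=0$, violating universality), it follows that $0\subsetneq W\subsetneq V$ is a composition series with irreducible quotients $W$ and $V/W$, so $V$ has length two.

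Finally, for linear oligomorphy I would mimic Lemma~\ref{lem:IIb-7}: introduce standard objects $E_n$ of $\cC$ of dimension bounded in terms of $n$, the one new feature being that the distinguished subspace $\fN(E_n)$ must be $\omega$-isotropic, so I would take it to be a totally isotropic (Lagrangian) subspace of a hyperbolic $\omega$-block, supplemented by an $\omega'$-nondegenerate block spanning $\ker(\omega|_{E_n})$. One then shows, by a Witt-type extension argument, that every $n$-dimensional object of $\cC$ embeds into $E_n$ as a $\cC$-morphism (sending $\fN(W)$ into the Lagrangian and the rest suitably). Fixing an embedding $E_n\subseteq V$ (possible since $V$ is universal in $\cC$), and noting that every $n$-dimensional subspace $U\subseteq V$ is an object of $\cC$ because $U\cap\ker(\omega')$ is automatically $\omega$-isotropic, homogeneity gives $g\in G$ with $gU\subseteq E_n$, so $\Gr_n(E_n)\to\Gr_n(V)/G$ is surjective. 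I expect this last step to be the main obstacle: unlike in Type~IIb, the isotropy constraint on $\fN(E_n)$ forces a more careful use of Witt's theorem when one simultaneously accommodates the $\omega$-geometry, the $\omega'$-geometry, and the distinguished subspace.
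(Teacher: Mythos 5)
Your proposal is correct and follows exactly the route the paper intends: its proof of this proposition is literally ``similar to that of Proposition~\ref{prop:IIb-group},'' and you have carried out that adaptation, correctly identifying the two points that actually change (that $\fN(V)=\ker(\omega')$ is now $\omega$-isotropic, which if anything simplifies the irreducibility of $W$, and that the standard objects $E_{n,m,\ell}$ must be modified so that $\fN$ sits inside a Lagrangian of an $\omega$-hyperbolic block). The only part left as a sketch is the Witt-type embedding into the modified standard object, but the construction you indicate is the right one and goes through just as in Lemma~\ref{lem:IIb-7}.
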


\begin{proof}
The proof is similar to that of Proposition~\ref{prop:IIb-group}.
\end{proof}

\section{Type III spaces} \label{s:III}

The following definition introduces our final class of biquadratic spaces.

\begin{definition} \label{defn:III}
A biquadratic space $V$ is of \defn{Type~III} if there exists a decomposition $V=U \oplus U'$ and a linear isomorphism $T \colon U \to U'$ with the following properties:
\begin{enumerate}
\item For $x,y \in V$ we have $\omega_x=\omega'_y$ if and only if $x \in U$ and $y=T(x)$.
\item The space $U$ equipped with the four quadratic forms
\begin{displaymath}
\eta_1 = \omega \vert_U, \qquad
\eta_2 = \omega' \vert_U, \qquad
\eta_3 = T^*(\omega \vert_{U'}), \qquad
\eta_4 = T^*(\omega' \vert_{U'})
\end{displaymath}
is an irreducible multi-quadratic space.
\end{enumerate}
We define the \defn{parameter} of $V$ to be $E=\Theta(\eta_1, \ldots, \eta_4)$; recall that this is the subspace of $k^4$ consisting of points $\alpha$ such that $\sum_{i=1}^4 \alpha_i \eta_i=0$.
\end{definition}

We note that this is a first-order definition. Indeed, the spaces $U$ and $U'$ are definable, as is the function $T$ (meaning its graph is definable). For instance, $x \in U$ if and only if there exists $y \in V$ such that $\omega_x=\omega'_y$; this is a first-order statement for membership in $U$. Additionally, condition (b) is first-order by our characterization of irreducible multi-quadratic spaces (Theorem~\ref{thm:mqclass}).

To study Type~III spaces, we introduce the following category $\cD$:
\begin{itemize}
\item An object is a biquadratic space $V$ equipped with a direct sum decomposition $V=V_1 \oplus V_2$ and an isomorphism $T \colon V_1 \to V_2$ of vector spaces such that $\omega_x=\omega'_{Tx}$ for all $x \in V_1$.
\item A morphism $V \to W$ is an embedding of biquadratic spaces that is compatible with $T$ and maps $V_i$ into $W_i$.
\end{itemize}
Note that a Type~III space canonically carries the structure of an object of $\cD$. Let $\cE$ be the category of 4-quadratic spaces. We define functors
\begin{displaymath}
\Phi \colon \cD \to \cE, \qquad \Psi \colon \cE \to \cD
\end{displaymath}
as follows. First suppose that $V$ is an object of $\cD$. We define $\Phi(V)$ to be the space $V_1$ equipped with the four quadratic forms
\begin{displaymath}
\eta_1=\omega \vert_{V_1}, \qquad \eta_2=\omega' \vert_{V_1}, \qquad \eta_3=T^*(\omega \vert_{V_2}), \qquad \eta_4=T^*(\omega' \vert_{V_2}).
\end{displaymath}
The definition of morphisms in $\cD$ ensures that $\Phi$ is a functor. Now suppose that $U$ is an object of $\cE$, with forms $\{\eta_i\}_{1 \le i \le 4}$. We define $V=\Psi(U)$ as follows. As a vector space, $V=V_1 \oplus V_2$, where each $V_i$ is a copy of $U$. The map $T \colon V_1 \to V_2$ is the identity. For $x,y \in V_1$, we define
\begin{displaymath}
\omega(x+Ty,x+Ty) = \eta_1(x,x) + \eta_3(y, y) + 2 \eta_4(x, y)
\end{displaymath}
\begin{displaymath}
\omega'(x+Ty,x+Ty) = 2\eta_1(x, y) + \eta_2(x,x) + \eta_4(y, y)
\end{displaymath}
One readily verifies that $V$ is naturally an object of $\cD$, and that $\Psi$ is naturally a functor. Moreover, the compositions $\Psi \circ \Phi$ and $\Phi \circ \Psi$ are both naturally isomorphic to the identity functors. Thus $\Phi$ and $\Psi$ are quasi-inverse equivalences.

\begin{proposition}
Two Type~III spaces are isomorphic if and only if their parameters coincide. In particular, a Type~III space is linearly $\omega$-categorical.
\end{proposition}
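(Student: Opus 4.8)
The plan is to reduce the statement to the classification of irreducible multi-quadratic spaces by transporting it across the equivalence $\Phi \colon \cD \to \cE$, $\Psi \colon \cE \to \cD$ just constructed.

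For the ``only if'' direction I would first check that the parameter of a Type~III space is an invariant of its isomorphism class as a biquadratic space. Here condition (a) of Definition~\ref{defn:III} does the work: it forces $U$ to be the definable subspace $\{x \in V : \exists y \in V,\ \omega_x = \omega'_y\}$ and $T$ to be the map carrying such an $x$ to the unique $y$ with $\omega_x = \omega'_y$, so that $U$, $U' = T(U)$, $T$, the four forms $\eta_1, \ldots, \eta_4$, and hence $E = \Theta(\eta_1, \ldots, \eta_4)$ are all canonically attached to $V$. In particular isomorphic Type~III spaces share a parameter, and ``the parameter'' is well-defined.

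For the ``if'' direction, given Type~III spaces $V$ and $V'$ with a common parameter $E$, I would equip each with its canonical structure as an object of $\cD$ and pass to $\cE$. By construction $\Phi(V)$ is $U$ equipped with the forms $\eta_1, \ldots, \eta_4$, so $\Theta(\Phi(V)) = E = \Theta(\Phi(V'))$; by condition (b) of Definition~\ref{defn:III} both are irreducible multi-quadratic spaces. The classification of such spaces (Theorem~\ref{thm:mqclass} and its corollaries) says their isomorphism type is determined by the $\Theta$-invariant, so $\Phi(V) \cong \Phi(V')$ in $\cE$. Applying $\Psi$ and the natural isomorphism $\Psi \circ \Phi \cong \id_{\cD}$ then gives $V \cong V'$ in $\cD$, hence as biquadratic spaces. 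The ``in particular'' clause follows by the same reasoning already used for Type~Ia: ``$V$ is Type~III'' and the value of $E$ are both recoverable from $\Th(V)$ (as noted after Definition~\ref{defn:III}), so $\Th(V) = \Th(V')$ forces $V'$ to be Type~III with parameter $E$, and the previous paragraph gives $V \cong V'$.

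I do not expect a serious obstacle here: the genuinely non-formal step — constructing $\Phi$ and $\Psi$ and verifying they are mutually quasi-inverse — has already been done. The two points to handle with a little care are (i) confirming that the $\cD$-structure, and therefore the parameter, is canonical, which is precisely the role of condition (a); and (ii) making sure the ``irreducible'' input needed by the multi-quadratic classification is indeed provided by condition (b), the relevant space being infinite-dimensional.
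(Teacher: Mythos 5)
Your proposal is correct and follows essentially the same route as the paper: the ``if'' direction transports the question across the equivalence $\Phi$ and invokes the $\Theta$-classification of irreducible multi-quadratic spaces (Corollary~\ref{cor:mqclass-1}), while the ``only if'' direction and the $\omega$-categoricity claim rest on the canonicity/definability of $U$, $T$, and the $\eta_i$, exactly as the paper notes in the discussion following Definition~\ref{defn:III}.
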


\begin{proof}
Let $V$ and $V'$ be Type~III spaces with the same parameter. Both $\Phi(V)$ and $\Phi(V')$ are irreducible 4-quadratic spaces. Since $V$ and $V'$ have the same parameter, these two 4-quadratic spaces are isomorphic (Corollary~\ref{cor:mqclass-1}). Since $\Phi$ is an equivalence, it follows that $V$ and $V'$ are isomorphic.
\end{proof}

\begin{proposition}
Let $V$ be a Type~III space, and let $G=\Aut(V)$. Then $G$ is linearly oligomorphic, $V$ has length two as a representation of $G$, and $\End_G(V)=\rM_2(k)$.
\end{proposition}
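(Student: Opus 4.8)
The plan is to identify $G=\Aut(V)$ with $H:=\Aut(U,\eta_1,\dots,\eta_4)$ and to show that, as an $H$-module, $V$ is two copies of the standard representation of $H$ on $U$; the three assertions then fall out of Schur's lemma and the results of \S\ref{s:multiq}.

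First I would check that the structure of $V$ as an object of $\cD$ is intrinsic, so that every automorphism of the biquadratic space $V$ is automatically a morphism in $\cD$. The subspace $U$ is definable, being $U=\{x\in V:\exists\,y\in V,\ \omega_x=\omega'_y\}$, hence $G$-stable. By Definition~\ref{defn:III}(a), for $x\in U$ the vector $T(x)$ is the \emph{unique} $y$ with $\omega'_y=\omega_x$; since any $g\in G$ satisfies $\omega_{gx}=\omega_x\circ g^{-1}$ and $\omega'_{gy}=\omega'_y\circ g^{-1}$, the vector $g\,T(x)$ is the unique $y$ with $\omega'_y=\omega_{gx}$, i.e. $g\,T(x)=T(gx)$. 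Thus $g$ commutes with $T$ and preserves $U'=T(U)$, so $g\in\Aut_{\cD}(V)$; the reverse inclusion being trivial, $G=\Aut_{\cD}(V)$.

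Next I would push this through the equivalence $\Phi\colon\cD\to\cE$ constructed above. As $\Phi$ is an equivalence, $G=\Aut_{\cD}(V)\cong\Aut_{\cE}(\Phi(V))=\Aut(U,\eta_1,\dots,\eta_4)=H$, and unwinding the quasi-inverse $\Psi$ (in which the gluing map $T$ is the identity) identifies $V$, as an $H$-module, with $U\oplus U$ on which $H$ acts diagonally via its standard action on $U$. By Definition~\ref{defn:III}(b), $(U,\eta_\bullet)$ is an irreducible multi-quadratic space, which is to say precisely that $U$ is an irreducible representation of $H$. Hence $V\cong U\otimes k^2$ as a $G$-module, with $U$ irreducible of dimension $\le\aleph_0$.

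The three claims are now formal. Since $V\cong U^{\oplus 2}$ with $U$ irreducible, $V$ has length two. Schur's lemma (Proposition~\ref{prop:schur}) gives $\End_H(U)=k$, because $k$ is uncountable and algebraically closed and $\dim_k U\le\aleph_0$; therefore $\End_G(V)=\End_H(U\otimes k^2)=\rM_2(\End_H(U))=\rM_2(k)$. Finally $H$ acts linearly oligomorphically on $U$ by Corollary~\ref{cor:mqclass-1} (an irreducible multi-quadratic space has linearly oligomorphic automorphism group), so $U^{\oplus 2}$ is a linearly oligomorphic representation of $G=H$ by Proposition~\ref{prop:sum-oligo}, and $G$ acts faithfully on $V\cong U^{\oplus2}$, so $G$ is linearly oligomorphic. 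The only step requiring genuine care is the one in the second paragraph above: one must check that the decomposition $V=U\oplus U'$ and the isomorphism $T$ are determined by the biquadratic structure alone, so that no automorphism of $V$ can mix or interchange the two simple constituents; granting that, everything reduces to Schur's lemma and \S\ref{s:multiq}.
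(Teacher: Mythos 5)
Your proposal is correct and follows essentially the same route as the paper: both arguments observe that $U=V_1$, $V_2$, and $T$ are canonically determined by the biquadratic structure (so $G=\Aut_{\cD}(V)$), transport the problem through the equivalence $\Phi$ to the irreducible $4$-quadratic space $(U,\eta_\bullet)$, and then deduce length two, $\End_G(V)=\rM_2(k)$, and linear oligomorphy from Schur's lemma, Corollary~\ref{cor:mqclass-1}, and Proposition~\ref{prop:sum-oligo}. The extra care you take in verifying $G$-equivariance of $T$ via the uniqueness clause of Definition~\ref{defn:III}(a) is exactly the point the paper summarizes as ``canonically defined.''
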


\begin{proof}
Since $V_1$ and $V_2$ are canonically defined in terms of $\omega$ and $\omega'$, they are stable by $G$. Similarly, since $T \colon V_1 \to V_2$ is canonically defined, it is $G$-equivariant. Thus $V_1$ and $V_2$ are isomorphic $G$-representations. By definition of Type~III, $V_1$ is an irreducible representation. Thus $V$ has length two, and $\End_G(V)=\rM_2(k)$.

By the above reasoning, $G$ is the automorphism group of $V$ regarded as an object of $\cD$. Thus $G$ is the automorphism group of the 4-quadratic space $\Phi(V)=V_1$. It follows from Corollary~\ref{cor:mqclass-1} that the action of $G$ on $V_1$ is linearly oligomorphic. Since $V$ is isomorphic to a direct sum of two copies of $V_1$, the action of $G$ on $V$ is also linearly oligomorphic (Proposition~\ref{prop:sum-oligo}).
\end{proof}

To close our analysis of Type~III spaces, we turn to the question of existence. To this end, we say that a subspace $E \subset k^4$ is \defn{good} if whenever $(a, c, 0, b)$ and $(c, 0, b, a)$ both belong to $E$ we have $a=b=c=0$.

\begin{proposition}
If $V$ is a Type~III space then its parameter $E$ is good. Conversely, every good $E$ arises from a Type~III space.
\end{proposition}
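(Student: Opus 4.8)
The plan is to run everything through the equivalence $\Phi\colon\cD\to\cE$ constructed above (with quasi-inverse $\Psi$). A Type~III space $V$, with its canonical $\cD$-structure $V=V_1\oplus V_2$, $T\colon V_1\to V_2$, satisfies $V\cong\Psi(\Phi(V))$ in $\cD$, and $U:=\Phi(V)$ is an irreducible $4$-quadratic space whose forms $\eta_1,\dots,\eta_4$ have $\Theta(\eta_1,\dots,\eta_4)=E$. Both conditions (a) and (b) of Definition~\ref{defn:III} are isomorphism-invariant properties of objects of $\cD$, and (b) for $\Psi(U)$ is, by the very construction of $\Phi$ and $\Psi$, just the irreducibility of $\Phi(\Psi(U))=U$. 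So the whole proposition reduces to the following statement: for an irreducible $4$-quadratic space $(U,\eta_1,\dots,\eta_4)$, the $\cD$-object $\Psi(U)$ satisfies condition~(a) if and only if $E:=\Theta(\eta_1,\dots,\eta_4)$ is good. Granting this, the first assertion is immediate (take $U=\Phi(V)$); for the converse it remains to build, for each good $E$, an irreducible $4$-quadratic space with $\Theta=E$, and then $\Psi$ of it is the desired Type~III space.

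First I would identify $\Delta(\eta_1,\dots,\eta_4)$. Let $\Omega=\langle\eta_1,\dots,\eta_4\rangle$ with basis $\zeta_1,\dots,\zeta_s$; by Theorem~\ref{thm:mqclass} the space $(U,\zeta_\bullet)$ is universal homogeneous, so $\Delta(\zeta_\bullet)=0$ and $U$ is infinite dimensional (Theorem~\ref{thm:mqchar}). Writing $\eta_i=\sum_j a_{ij}\zeta_j$ and expanding, one finds $(x_1,\dots,x_4)\in\Delta(\eta_\bullet)$ exactly when $\sum_i a_{ij}x_i=0$ for all $j$; since the $4\times s$ matrix $A=(a_{ij})$ has rank $s$ and left nullspace precisely $E$, this says exactly that $(x_1,\dots,x_4)$ lies in the subspace $E\otimes_k U$ of $k^4\otimes_k U=U^4$. (Alternatively: $\Delta(\eta_\bullet)$ is a subrepresentation of $U^4$ for $\Aut(U,\eta_\bullet)$, hence of the form $U\otimes E'$ by Corollary~\ref{cor:schur2}, and a one-line check gives $E'=\Theta(\eta_\bullet)$.)

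Next I would compute the forms on $\Psi(U)$. Writing a general vector as $(x,y)=x+Ty$ with $x,y\in U$ and identifying $V_2$ with $U$ via $T$, polarizing the two defining formulas gives $\omega_{(x_1,y_1)}\vert_{V_1}=(\eta_1)_{x_1}+(\eta_4)_{y_1}$, $\omega_{(x_1,y_1)}\vert_{V_2}=(\eta_4)_{x_1}+(\eta_3)_{y_1}$, $\omega'_{(x_1,y_1)}\vert_{V_1}=(\eta_2)_{x_1}+(\eta_1)_{y_1}$, $\omega'_{(x_1,y_1)}\vert_{V_2}=(\eta_1)_{x_1}+(\eta_4)_{y_1}$. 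Hence, putting $a=x_1-y_2$, $b=y_1$, $c=-x_2$, the identity $\omega_{(x_1,y_1)}=\omega'_{(x_2,y_2)}$ is equivalent to the pair of conditions $(a,c,0,b)\in\Delta(\eta_\bullet)$ and $(c,0,b,a)\in\Delta(\eta_\bullet)$, while the conclusion of condition~(a) (namely $(x_1,y_1)\in V_1$ and $(x_2,y_2)=T(x_1,y_1)$) amounts to $a=b=c=0$. Every triple $(a,b,c)\in U^3$ arises this way, so condition~(a) for $\Psi(U)$ reads: $(a,c,0,b)\in E\otimes U$ and $(c,0,b,a)\in E\otimes U$ force $a=b=c=0$. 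Expanding $a,b,c$ in a basis of the (at most three-dimensional) subspace they span and using $E\otimes U=\bigoplus_m(u_m\otimes E)$ for a basis $\{u_m\}$ of $U$, this is equivalent to goodness of $E$; the reverse direction needs only a single nonzero vector of $U$. This finishes the reduction and hence the first assertion. For the converse, note first that a good $E$ has $\dim E\le 2$: if $\dim E\ge 3$ then the requirements $(a,c,0,b),(c,0,b,a)\in E$ cut out the triples $(a,b,c)$ by at most two linear equations, hence admit a nonzero solution. Thus $s:=4-\dim E\ge 2$; choose a $4\times s$ matrix $A$ of rank $s$ whose left nullspace is $E$ (columns a basis of $E^{\perp}$), let $(U,\zeta_1,\dots,\zeta_s)$ be the universal homogeneous $s$-quadratic space (Theorem~\ref{thm:homo}), and set $\eta_i=\sum_j a_{ij}\zeta_j$. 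Then $(U,\eta_1,\dots,\eta_4)$ is irreducible (Theorem~\ref{thm:mqclass}) with $\Theta(\eta_\bullet)=E$, so $\Psi(U)$ is a Type~III space with parameter $E$.

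The main obstacle I anticipate is the middle computation: polarizing the mildly asymmetric formulas defining $\omega$ and $\omega'$ on $\Psi(U)$ and then matching, with the right index orderings and signs, the memberships in $\Delta(\eta_\bullet)$ forced by the equation $\omega_x=\omega'_y$ to the exact shape $(a,c,0,b)$, $(c,0,b,a)$ that defines ``good.'' The identification of $\Delta(\eta_\bullet)$ is a short unwinding, and the converse is a direct construction.
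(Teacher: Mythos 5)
Your proposal is correct and follows essentially the same route as the paper: reduce via the equivalence $\Phi,\Psi$ to analyzing condition (a) for $\Psi(U)$, polarize the defining formulas, and match the resulting memberships $(a,c,0,b),(c,0,b,a)\in\Delta(\eta_\bullet)=E\otimes U$ against the definition of goodness. You are in fact slightly more careful than the paper on two minor points --- the basis expansion reducing $U$-valued witnesses to scalar ones, and the explicit construction of an irreducible $4$-quadratic space with prescribed $\Theta=E$ --- but the argument is the same.
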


\begin{proof}
Let $U$ be an irreducible 4-quadratic space, let $E$ be defined as above, and let $V=\Psi(U)$. We analyze the condition Definition~\ref{defn:III}(a). For $x,x',y,y' \in V_1$, we have
\begin{align*}
\omega(x+Ty, x'+Ty') &= \eta_1(x,x')+\eta_3(y,y')+\eta_4(x,y')+\eta_4(y,x') \\
\omega'(x+Ty, x'+Ty') &= \eta_1(x,y')+\eta_1(y,x')+\eta_2(x,x')+\eta_4(y,y').
\end{align*}
We thus see that $\omega(x_1+Ty_1, -)=\omega'(x_2+Ty_2,-)$ if and only if
\begin{align*}
\eta_1(x_1,-)+\eta_4(y_1,-) &= \eta_1(y_2,-)+\eta_2(x_2,-), \\
\eta_3(y_1,-)+\eta_4(x_1,-) &= \eta_1(x_2,-)+\eta_4(y_2,-).
\end{align*}
This happens if and only if there is some $v \in V_1$ such that
\begin{displaymath}
(x_1,y_1,x_2,y_2)=(a,b,c,d) v
\end{displaymath}
for some $a,b,c,d \in k$ and
\begin{displaymath}
(a-d,-c,0,b), \qquad (-c, 0, b, a-d)
\end{displaymath}
belong to $E$. If $E$ is good, this forces $a=d$ and $b=c=0$, meaning $y_1=x_2=0$ and $y_2=x_1$, which shows that $V$ is Type~III. Conversely, if $E$ is not good then there exists a tuple $(a,b,c,d) \in k^4$ such that at least one of $a-d$, $b$, or $c$ is non-zero and both of the above vectors belong to $E$. We then find $\omega(av+bTv,-)=\omega'(cv+dTv,-)$, which shows that $V$ is not Type~III. This completes the proof.
\end{proof}

\section{The main theorem} \label{s:mainthm}

We now prove Theorem~\ref{mainthm}. We note that, while the introduction assumed the field $k$ had characteristic~0, we now simply assume the characteristic is not~2 (as well as the other conditions imposed in \S \ref{s:multiq}). Let $V$ be a length two universal biquadratic space. We must show that $V$ belongs to one of the seven families appearing in Table~\ref{tab:main}. In what follows, ``representation'' will always mean a representation of $G=\Aut(V)$.

\begin{lemma}
If $\ker(V)$ is non-zero then $V$ has Type~Ic.
\end{lemma}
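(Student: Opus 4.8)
The plan is to show that $V/\ker(V)$ is the universal homogeneous biquadratic space, which together with the hypothesis $\ker(V)\neq 0$ is precisely the definition of Type~Ic. Write $G=\Aut(V)$ and $\ol V=V/\ker(V)$.

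First I would check that $\ker(V)=\ker(\omega)\cap\ker(\omega')$ is a proper nonzero $G$-subrepresentation. It is a subrepresentation because every automorphism preserves both forms, hence both nullspaces and their intersection; it is nonzero by hypothesis; and it is proper because $\ker(V)=V$ would force $\omega=\omega'=0$, contradicting the universality of $V$ (a universal biquadratic space cannot have a vanishing form, by Theorem~\ref{thm:univ} and the characterization of universality). Since $V$ has length exactly two, a proper nonzero subrepresentation is simple with simple quotient, so both $\ker(V)$ and $\ol V$ are irreducible representations of $G$.

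Next I would analyze the forms $\ol\omega,\ol\omega'$ induced on $\ol V$, which make sense since $\ker(V)$ lies in the nullspace of each. Two small observations are needed. (i) $\ol V$ is infinite dimensional: since $\ker(\ol\omega)=\ker(\omega)/\ker(V)$ one has $\operatorname{rank}(\ol\omega)=\operatorname{rank}(\omega)$, so if $\ol V$ were finite dimensional then $\omega$ would have finite rank, contradicting universality. (ii) $\ol\omega$ and $\ol\omega'$ are linearly independent: a relation $\alpha\ol\omega+\beta\ol\omega'=0$ on $\ol V$ with $(\alpha,\beta)\neq 0$ would give $\alpha\omega+\beta\omega'=0$ on $V$, i.e.\ a nontrivial linear combination of the forms of rank zero, again contradicting universality. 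Thus $\ol\omega,\ol\omega'$ form a basis of the span of the forms on $\ol V$.

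Finally I would invoke the classification of irreducible multi-quadratic spaces (Theorem~\ref{thm:mqclass}): an infinite dimensional irreducible biquadratic space whose two forms are a basis of the span of its forms is universal homogeneous. Applied to $\ol V$ with the basis $\ol\omega,\ol\omega'$, this shows $(\ol V,\ol\omega,\ol\omega')$ is the universal homogeneous biquadratic space, and since $\ker(V)\neq 0$ we conclude that $V$ is of Type~Ic (with parameter $d=\dim\ker(V)\in\{1,2,\ldots,\infty\}$). I do not anticipate a genuine obstacle; the only points demanding care are the two degeneracy exclusions above — that $\ker(V)\neq V$ and that the forms remain independent on the quotient — and both follow immediately from universality. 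In effect this lemma merely reduces the case $\ker(V)\neq 0$ of the main theorem to the already-established classification of irreducible biquadratic spaces.
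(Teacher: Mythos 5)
Your proof is correct and follows essentially the same route as the paper: show $\ker(V)$ is a proper nonzero subrepresentation, deduce that the quotient is an irreducible biquadratic space, rule out degeneracies of the induced forms via universality, and invoke Theorem~\ref{thm:mqclass}. Your two extra checks (infinite dimensionality of the quotient and linear independence of the induced forms) are exactly the points the paper also verifies, the latter phrased there as ``no $\omega(p)$ vanishes on $V/U$.''
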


\begin{proof}
Put $U=\ker(V)$. We cannot have $U=V$, as then $\omega$ and $\omega'$ would vanish identically and $V$ would not be universal. Thus $U$ is a non-zero proper subrepresentation. Since $V$ has length two, it follows that $V/U$ is an irreducible representation of $G$. The forms $\omega$ and $\omega'$ descend to $V/U$, and the action of $G$ on $V/U$ respects these forms. Thus $V/U$ is an irreducible biquadratic space. If some $\omega(p)$ vanishes on $V/U$, then it would vanish on all of $V$, and $V$ would not be universal; thus this cannot happen. Therefore, by the classification of irreducible biquadratic spaces (Theorem~\ref{thm:mqclass}), $V/U$ is the universal homogeneous biquadratic space. We thus see that $V$ has Type~Ic, as required.
\end{proof}

We assume in what follows that $\ker(V)$ vanishes. Let $\Delta=\Delta(\omega, \omega')$. Recall that this is the set of pairs $(x,y) \in V \oplus V$ such that $\omega_x=\omega'_y$. It is clear that $\Delta$ is a subrepresentation of $V \oplus V$. We let $\pi_i \colon \Delta \to V$ be the projection map. We have $\ker(\pi_1)=\ker(\omega')$ and $\ker(\pi_2)=\ker(\omega)$, where we identify $V \oplus 0$ with $V$.

\begin{lemma} \label{lem:main-1}
$\Delta$ has length one or two.
\end{lemma}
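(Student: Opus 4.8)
The idea is to bound the length of $\Delta$ as a representation of $G$ by relating it to $V$ via the projection maps $\pi_1,\pi_2$ and using that $V$ itself has length two. First I would record the exact sequences coming from the projections. Since $\ker(\pi_1)=\ker(\omega')$ and $\ker(\pi_2)=\ker(\omega)$, and since $\ker(V)=\ker(\omega)\cap\ker(\omega')=0$, the map $(\pi_1,\pi_2)\colon \Delta\to V\oplus V$ is the inclusion, so it is enough to understand the images $\pi_i(\Delta)\subset V$. Each $\pi_i(\Delta)$ is a subrepresentation of $V$, hence $0$, a simple constituent, or all of $V$; and $\Delta$ sits in an exact sequence $0\to\ker(\pi_1)\to\Delta\to\pi_1(\Delta)\to 0$ with $\ker(\pi_1)=\ker(\omega')$ also a subrepresentation of $V$.

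The key point is that $\ker(\omega')$ and $\pi_1(\Delta)$ are \emph{both} subrepresentations of the length-two representation $V$, so each has length at most two, giving $\mathrm{length}(\Delta)\le 4$ for free; the work is to cut this down to $\le 2$. For this I would argue that $\ker(\omega')$ cannot be too large when $\pi_1(\Delta)$ is large. Concretely: if $\pi_1(\Delta)=V$, then every $x\in V$ satisfies $\omega_x=\omega'_y$ for some $y$; I want to deduce $\ker(\omega')$ is small (length $\le 1$, or even $0$). Note $\ker(\omega')=\{x : (x,0)\in\Delta\}$; but also for $x\in\ker(\omega')$ we have $\omega'_x$ arising from some preimage, and I can play the two forms against each other. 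The cleanest route: observe that $\Delta\cap(V\oplus 0)=\ker(\omega')\oplus 0$ and $\Delta\cap(0\oplus V)=0\oplus\ker(\omega)$, and that the composite $\ker(\omega')\hookrightarrow V \xrightarrow{\omega} V^*$ (well, into linear functionals) is injective with image equal to $\{\omega'_y : (x,y)\in\Delta,\ x\in\ker(\omega')\}\subset \omega'(V)$; chasing this shows $\dim$-type constraints that, combined with universality (no $\omega(p)$ has finite rank, via Theorem~\ref{thm:univ}), force $\ker(\omega)$ and $\ker(\omega')$ to have length $\le 1$ each as representations, and similarly the images. Then a four-term filtration argument on $\Delta$ collapses to length $\le 2$.

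Actually the slickest formulation is likely this. Since $V$ has length two with simple constituents $L_1,L_2$, any subrepresentation $W\subseteq V$ has $\mathrm{length}(W)\in\{0,1,2\}$, and $\mathrm{length}(W)=2$ forces $W=V$. Apply this to $W=\ker(\omega')=\ker(\pi_1)$: if $\mathrm{length}(\ker(\pi_1))=2$ then $\omega'=0$, contradicting universality; so $\mathrm{length}(\ker(\pi_1))\le 1$. Likewise $\mathrm{length}(\ker(\pi_2))\le 1$, i.e.\ $\mathrm{length}(\ker(\omega))\le 1$. Now $\Delta/\ker(\pi_1)\cong\pi_1(\Delta)\subseteq V$. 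If $\mathrm{length}(\pi_1(\Delta))\le 1$ we immediately get $\mathrm{length}(\Delta)\le 2$ and we are done. The remaining case is $\pi_1(\Delta)=V$ (length two). In that case I claim $\ker(\pi_1)=0$: indeed $\ker(\pi_1)=\ker(\omega')$, and if $0\ne x\in\ker(\omega')$ then, using $\pi_1(\Delta)=V$ applied to an element witnessing $x$ and to other elements, one shows $\omega'$ restricted to a complement has a constituent killed — more directly, the map $\Delta\to V$, $(x,y)\mapsto y$ has kernel $\ker(\omega)$ and image $\pi_2(\Delta)$, and the isomorphism $\Delta/\ker(\omega')\cong V$ together with $\mathrm{length}(\ker(\omega))\le 1$ and $\mathrm{length}(\ker(\omega'))\le 1$ forces, by counting constituents in $\Delta$ two ways, that $\ker(\omega')$ and $\ker(\omega)$ both vanish; hence $\Delta\cong V$ has length two. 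In all cases $\mathrm{length}(\Delta)\le 2$, and $\Delta\ne 0$ because $\ker(\omega)\oplus 0 \subset \Delta$ is nonzero (as $\omega$ has infinite rank, $\ker(\omega)$ could actually be zero — so instead note $\Delta$ contains the graph-type relations; in fact $\Delta\ne 0$ precisely because... ) — here I must be slightly careful, so the genuine statement to verify is just $\mathrm{length}(\Delta)\le 2$, and that $\Delta\ne 0$ will be handled by the surrounding argument or follows since otherwise $\omega,\omega'$ would have independent columns making $V$ itself universal homogeneous of length one.

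\textbf{Main obstacle.} The delicate step is the case $\pi_1(\Delta)=V$: ruling out a genuine length-two kernel contribution requires combining the representation-theoretic bookkeeping (constituents of $\Delta$ counted via both projections) with the universality hypothesis to exclude degenerate forms, and making sure the "counting constituents two ways" argument is valid in the infinite-dimensional setting — which is exactly where Schur's lemma over an uncountable field (\S\ref{ss:schur}, Corollary~\ref{cor:schur1}) is needed to guarantee $\Hom_G$-spaces are finite-dimensional and length behaves additively in short exact sequences. I expect the bulk of the written proof to be this case analysis, with the generic case ($\pi_1(\Delta)$ of length $\le 1$) being immediate.
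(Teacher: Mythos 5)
Your setup is sound and partially matches the paper's: the lower bound ($\Delta\neq 0$, since $\Delta=0$ would make $V$ universal homogeneous and hence irreducible by Theorem~\ref{thm:mqchar} and Proposition~\ref{prop:uh-irred}) is right, as is the observation that $\ker(\pi_1)=\ker(\omega')$ and $\ker(\pi_2)=\ker(\omega)$ are proper subrepresentations (else $\omega'=0$ or $\omega=0$, contradicting universality) and hence have length at most one, so that the case $\operatorname{length}(\pi_1(\Delta))\le 1$ is immediate.

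However, there is a genuine gap in the one case that matters: $\pi_1(\Delta)=V$ with $\ker(\omega')\neq 0$, i.e.\ $\operatorname{length}(\Delta)=3$. Your proposed mechanism --- ``counting constituents in $\Delta$ two ways'' via $\pi_1$ and $\pi_2$ --- cannot close this case, because length three is numerically consistent with both projections: it simply forces $\ker(\pi_2)=\ker(\omega)$ to also be nonzero of length one and $\pi_2(\Delta)=V$. No contradiction arises from bookkeeping alone ($V\oplus V$ has length four, and a length-three subrepresentation is not excluded by any general principle). The missing idea is a bilinear-form argument, not a representation-theoretic one: once both $W=\ker(\omega)$ and $W'=\ker(\omega')$ are nonzero, they are irreducible (proper nonzero subrepresentations of a length-two module), and $W\cap W'=\ker(V)=0$ forces $V=W\oplus W'$. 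Then surjectivity of $\pi_1$ says every $x\in V$ admits $y$ with $\omega_x=\omega'_y$; but this common functional vanishes on $W$ (being $\omega_x$) and on $W'$ (being $\omega'_y$), hence on $W+W'=V$, so $\omega=0$, contradicting universality. Your sketch gestures toward ``combining with universality'' but never produces this step, and the concrete argument you do write down (constituent counting) would fail. You would need to add this orthogonality argument to complete the proof.
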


\begin{proof}
If $\Delta=0$ then $V$ is the universal homogeneous biquadratic space (Theorem~\ref{thm:mqchar}), which is irreducible (Proposition~\ref{prop:uh-irred}). Thus $\Delta$ must have length at least one.

Suppose now that $\Delta$ has length at least three. Since $V$ has length two, the projection maps $\pi_i$ must have non-zero kernels. Thus $W=\ker(\omega)$ and $W'=\ker(\omega')$ are non-zero. We cannot have $W=V$ or $W'=V$, for then $V$ would not be universal. Thus $W$ and $W'$ are irreducible subrepresentations of $V$. We have already made the assumption $W \cap W'=0$, and so $V=W \oplus W'$. Since the kernel of $\pi_1$ has length one, the image of $\pi_1$ has length at least two, and so $\pi_1$ is surjective. This means that for every $x \in V$ there is $y \in V$ such that $\omega_x=\omega'_y$. Since $\omega_x$ vanishes on $W$ and $\omega'_y$ vanishes on $W'$, we find that $\omega_x$ is identically~0. Since $x$ is arbitrary, we have $\omega=0$, which contradicts the universality of $V$. We conclude that $\Delta$ has length at most two.
\end{proof}

\begin{lemma}
If $\Delta$ has length two then $V$ is of Type~Ia or IIa.
\end{lemma}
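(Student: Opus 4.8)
The plan is to recover the full structure of $V$ from the two projections $\pi_1,\pi_2\colon\Delta\to V$, whose kernels are isomorphic to $\ker(\omega')$ and $\ker(\omega)$ and hence meet only in $\ker(V)=0$. I would organize everything around the set $Z\subseteq\bP^1$ of points $P$ with $\ker(\omega(P))\neq 0$: for each such $P$ the space $\ker(\omega(P))$ is a nonzero subrepresentation of $V$, proper by universality, hence irreducible since $V$ has length two. The cases $\vert Z\vert\geq 2$ and $\vert Z\vert\leq 1$ will produce Type~Ia and Type~IIa respectively.

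Suppose first $\vert Z\vert\geq 2$, and fix $P_1\neq P_2$ in $Z$. One has $\ker(\omega(P_1))\neq\ker(\omega(P_2))$: a common value would be killed by the entire pencil spanned by $\omega,\omega'$ and so lie in $\ker(V)=0$. Thus these are distinct irreducible subrepresentations; they intersect trivially, so their sum has length two and is all of $V$, giving $V=\ker(\omega(P_1))\oplus\ker(\omega(P_2))$. Both summands are infinite dimensional: $\omega(P_1)$ vanishes on $\ker(\omega(P_1))$ and on the cross terms while restricting nondegenerately to $\ker(\omega(P_2))$ (its nullspace there sits in $\ker(\omega(P_1))\cap\ker(\omega(P_2))=0$), so $\rank(\omega(P_1))=\dim\ker(\omega(P_2))$, which is infinite because $\omega(P_1)$ is a nontrivial linear combination of $\omega,\omega'$ and $V$ is universal; the symmetric argument handles $\ker(\omega(P_1))$. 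This is exactly the definition of Type~Ia with parameter $\{P_1,P_2\}$.

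Now suppose $\vert Z\vert\leq 1$. Since $\vert Z\vert<\vert\bP^1\vert$, a twist by $\GL(2)$ — harmless, as it merely relabels the pencil and preserves the Type~Ia and Type~IIa conditions — lets me arrange $Z\subseteq\{\infty\}$; in particular $0\notin Z$, so $\ker(\omega)=0$. Then $\pi_2$ is injective, hence an isomorphism since $V$ has length two, and $S:=\pi_1\circ\pi_2^{-1}\colon V\to V$ is a $G$-equivariant operator with $\omega_{Sv}=\omega'_v$ for all $v$, $\ker(S)=\ker(\omega')$, and $S\neq 0$ (else $\omega'=0$). Since $\End_G(V)$ is finite dimensional over the algebraically closed field $k$ (Corollary~\ref{cor:schur1}), $S$ satisfies a polynomial over $k$ and so has an eigenvalue $\mu$ with $\ker(S-\mu)\neq 0$; a nonzero vector there lies in $\ker(\mu\omega-\omega')$, so the form $\mu\omega-\omega'$ is degenerate and its point belongs to $Z\subseteq\{\infty\}$, which forces that point to be $\infty$ and hence $\mu=0$. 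In particular $Z\neq\emptyset$, so $Z=\{\infty\}$ and $W:=\ker(S)=\ker(\omega')$ is nonzero; moreover $0$ is the only eigenvalue of $S$, so $S$ is nilpotent. Since $W$ is nonzero and proper (as $S\neq 0$) it is irreducible, and as $S$ is nilpotent length two then forces $\ker(S^2)=V$, i.e.\ $S^2=0$; hence $\im(S)\subseteq\ker(S)=W$, and $\im(S)=W$ as both are irreducible and $\im(S)\neq 0$.

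It remains to verify that $V$ has Type~IIa with parameter $\infty$. Axiom~(a) of Definition~\ref{def:IIa} is $\ker(\omega(\infty))=W\neq 0$. For axiom~(c) it suffices, by the remark following that definition, to treat $q=0$: since $S$ kills $W$ and maps $V$ onto $W=\im(S)$, it descends to a linear isomorphism $\ol S\colon V/W\to W$, and $\omega_{\ol S(\ol v)}=\omega_{Sv}=\omega'_v$, which is exactly (c) with $T=\ol S$. For axiom~(b), with $W=\im(S)=\ker(\omega')$: for $u,u'\in V$ we have $\omega(Su,Su')=\omega'_u(Su')=0$ since $Su'\in\ker(\omega')$, so $\omega$ and $\omega'$ both vanish on $W$ and $W$ lies inside its total orthogonal complement; conversely, if $x$ is orthogonal to $W=\im(S)$ under both forms then $0=\omega(x,Su)=\omega'_u(x)$ for all $u$, so $x\in\ker(\omega')=W$. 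Thus $W$ equals its total orthogonal complement, and since $V$ is universal by hypothesis, $V$ is of Type~IIa. I expect the only genuinely delicate points to be the translation between eigenvalues of $S$ and degenerate points of the pencil — which is what dictates the twist sending the special point to $\infty$ — and the length-two d\'evissage upgrading ``$S$ nilpotent and nonzero'' to ``$S^2=0$ with irreducible kernel''; the rest is routine manipulation of the two forms.
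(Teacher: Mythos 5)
Your proof is correct and follows essentially the same route as the paper: the case of two degenerate members of the pencil gives Type~Ia exactly as in the paper, and the remaining case is handled via the $G$-equivariant operator coming from $\Delta$ together with the finite-dimensionality of $\End_G(V)$. The only difference is organizational: the paper splits the remaining case into ``exactly one degenerate point'' (Type~IIa, via Schur's lemma applied to the irreducible image of $\pi_2$) and ``no degenerate point'' (a contradiction via the minimal polynomial), whereas you merge these by showing that the single operator $S=\pi_1\circ\pi_2^{-1}$ is nilpotent with $S^2=0$ and $\im(S)=\ker(S)=\ker(\omega')$ --- a slightly cleaner packaging of the same ideas.
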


\begin{proof}
We proceed in three cases.

\textit{(a) There exist $p \ne q \in \bP^1$ such that both $\omega(p)$ and $\omega(q)$ have non-zero nullspace.} Twisting by $\GL(2)$, we can assume that $\omega$ and $\omega'$ themselves have non-zero nullspaces $W$ and $W'$. As in the proof of Lemma~\ref{lem:main-1}, $W$ and $W'$ are irreducible and $V=W \oplus W'$. Since $\omega$ has infinite rank, $W'$ must be infinite dimensional; similarly, $W$ must be infinite dimensional. Thus $V$ is of Type~Ia.

\textit{(b) The form $\omega(p)$ has non-zero nullspace for a unique $p \in \bP^1$.} Twisting by $\GL(2)$, we assume $p=0$, i.e. $\omega$ has a non-zero nullspace $W$. Since $V$ is universal, $W \ne V$, and so $W$ is irreducible. Since $\ker(\omega')=0$, it follows that $\pi_1$ is injective, and is therefore an isomorphism since $\Delta$ and $V$ have the same length. The projection $\pi_2$ has kernel $W \oplus 0$, and so its image is an irreducible subrepresentation $U$ of $V$. We thus see that for every $x \in U$ there is a $y \in V$ such that $\omega_y=\omega'_x$; moreover, $y$ is unique modulo $W$. In other words, we have an isomorphism $T \colon U \to V/W$ such that $\omega'_x = \omega_{Tx}$ for $x \in U$.

We claim $U=W$. Suppose not. Since $U$ and $W$ are irreducible, it follows that $U \cap W=0$, and so $V=U \oplus W$. We thus have $V/W=U$ canonically, and so we can regard $T$ as an endomorphism of $U$. By Schur's lemma (Proposition~\ref{prop:schur}), it is thus multiplication by some scalar $\alpha$, and so we have $\omega'_x=\alpha \omega_x$ for $x \in U$. But this means that $\omega'-\alpha \omega$ has nullspace $U$, a contradiction. This establishes the claim.

Thus $T \colon W \to V/W$ is an isomorphism. For $v \in W$, we see that $\omega'_v = \omega_{Tv}$ vanishes on $W$. This shows that $W$ is isotropic for $\omega'$. Thus the total orthogonal complement $W'$ of $W$ contains $W$, and so we have $W'=W$, as $W'=V$ is impossible (this would mean $\ker(V)$ contains $W$). We therefore find that $V$ is of Type~IIa.

\textit{(c) The form $\omega(p)$ has vanishing nullspace for all $p \in \bP^1$.} In this case, $\pi_i$ has no kernel, and is therefore an isomorphism since its source and target have the same length. It follows that there is a unique isomorphism $T \colon V \to V$ such that $\omega_x=\omega'_{Tx}$ for all $x \in V$. Since $\End_G(V)$ is finite dimensional (Corollary~\ref{cor:schur1}), $T$ satisfies a minimal polynomial, and so $V$ decomposes into generalized eigenspaces for $T$. However, if $x$ is an $\alpha$-eigenvector of $T$ then $\omega_x=\alpha \omega'_x$, and so $x$ belongs to the nullspace of $\omega-\alpha \omega'$, and so $x=0$. Thus all eigenspaces for $T$ vanish, which is a contradiction. We conclude that this case does not occur.
\end{proof}

\begin{lemma}
If $\Delta$ is irreducible then $V$ is of Type~Ib, IIb, IIc, or III.
\end{lemma}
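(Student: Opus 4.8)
\emph{Proof sketch.} The plan is to split into cases according to whether some member $\omega(p)$ of the pencil, $p\in\bP^1$, has nonzero nullspace, and to read off the structure of $V$ from the behaviour of the two projections $\pi_i\colon\Delta\to V$. Throughout one uses that $\ker(\omega(p))\subsetneq V$ for every $p$ (each $\omega(p)$ is a nonzero member of the pencil, hence of infinite rank by universality), that $\ker(\pi_2)=\ker(\omega)$ and $\ker(\pi_1)=\ker(\omega')$, and that any nonzero proper subrepresentation of $V$ is automatically irreducible since $V$ has length two.

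\emph{Case 1: $\ker(\omega(p))=0$ for all $p$.} Then $\ker(\omega)=\ker(\omega')=0$, so $\pi_1,\pi_2$ are injective, hence isomorphisms of $\Delta$ onto irreducible subrepresentations $U=\pi_1(\Delta)$ and $U'=\pi_2(\Delta)$ of $V$. Put $T=\pi_2\circ\pi_1^{-1}\colon U\to U'$, a $G$-equivariant isomorphism with $\omega_x=\omega'_{Tx}$ for $x\in U$. If $U=U'$, Schur's lemma (Proposition~\ref{prop:schur}) forces $T$ to be a scalar $\alpha$, whence $U\subseteq\ker(\omega-\alpha\omega')$, a contradiction; so $U\ne U'$, $U\cap U'=0$, and a length count gives $V=U\oplus U'$. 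Since $\Delta=\{(u,Tu):u\in U\}$ is the graph of $T$, condition~(a) of Definition~\ref{defn:III} holds; and since $G$ preserves the four forms $\eta_1,\dots,\eta_4$ (using $G$-equivariance of $T$), the $G$-irreducible space $U$ is irreducible also for $\Aut(U,\eta_\bullet)$, which is condition~(b). Thus $V$ is of Type~III.

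\emph{Case 2: some $\omega(p)$ has nonzero nullspace.} Twisting by $\GL(2)$ we may take $p=\infty$ (this does not disturb irreducibility of $\Delta$: otherwise the twisted $\Delta$ has length two, forcing the twist, hence $V$ itself, to be of Type~Ia or~IIa, whose $\Delta$ has length two). So $W:=\ker(\omega')\ne 0$, hence $W$ is irreducible. Now $\ker(\pi_1)=0\times W$ is a nonzero subrepresentation of the irreducible $\Delta$, so $\Delta=0\times W$; consequently $\ker(\pi_2)=0$, i.e. $\ker(\omega)=0$, and $\omega_x+\omega'_y=0$ forces $x=0$. Let $W^\perp$ be the total orthogonal complement of $W$; it is a subrepresentation, and $W^\perp\ne V$ (else $W\subseteq\ker(\omega)=0$). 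If $W^\perp=0$ then $V$ is of Type~IIb by definition. Otherwise $W^\perp$ is irreducible and the radical $W\cap W^\perp=\ker(\omega\vert_W)$ of $\omega\vert_W$ is $0$ or $W$. If it is $W$, then $W\subseteq W^\perp$ and irreducibility forces $W^\perp=W$; together with $\ker(V)=0$ and the relation above this is precisely the definition of Type~IIc. If it is $0$, then $V=W\oplus W^\perp$ is orthogonal for both forms with $\omega\vert_W$ nondegenerate; here $\omega'\vert_{W^\perp}$ is nondegenerate and of infinite rank, so $W^\perp$ is infinite dimensional, irreducible as a biquadratic space, and its two forms are linearly independent (if $\omega\vert_{W^\perp}=c\,\omega'\vert_{W^\perp}$ then $(x,cx)\in\Delta$ for $0\ne x\in W^\perp$, against $\Delta=0\times W$). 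By Theorems~\ref{thm:mqchar} and~\ref{thm:mqclass}, $W^\perp$ is then the universal homogeneous biquadratic space, so $V$ is of Type~Ib.

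The crux is the final sub-case, where $\omega\vert_W$ is nondegenerate: one must verify that $V$ really splits orthogonally and, above all, eliminate the degenerate scenario in which $W^\perp$ carries only a one-dimensional pencil — which would secretly realise $V$ as a Type~Ia space and contradict irreducibility of $\Delta$. The accompanying checks — infinite-dimensionality of $W^\perp$ (so that Theorem~\ref{thm:mqclass} applies) and invariance of the standing hypotheses under the $\GL(2)$-twist — are routine but should not be skipped.
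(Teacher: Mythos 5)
Your proof is correct and follows essentially the same strategy as the paper: split on whether some $\omega(p)$ has a nonzero nullspace, use the projections $\pi_i\colon\Delta\to V$ together with irreducibility of $\Delta$ and Schur's lemma to pin down the structure, and then distinguish Ib/IIb/IIc via the total orthogonal complement of the kernel. The only (cosmetic) divergences are that you verify Definition~\ref{defn:III}(b) directly from $G$-equivariance of $T$ rather than via the $\cD\simeq\cE$ equivalence, and you organize the Ib/IIc split by the radical of the form on $W$ rather than by cases on $U$; both are sound.
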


\begin{proof}
We consider two cases.

\textit{(a) Some $\omega(p)$ has non-zero nullspace.} Twisting by $\GL(2)$, we assume that $\omega$ has non-zero nullspace $W$. Since $\Delta$ is irreducible and contains $W \oplus 0$, we have $\Delta=W \oplus 0$. In particular, $\omega'$ has zero nullspace. Let $U$ be the orthogonal complement of $W$ under $\omega'$, i.e., $U$ consists of those vectors $x \in V$ such that $\omega'_x \vert_W=0$. We cannot have $U=V$, for then $W$ would be contained in the nullspace of $\omega'$. If $U=0$ then $V$ has Type~IIb. Thus suppose that $U$ is irreducible.

First suppose $U \ne W$. Then $V = U \oplus W$. This is an orthogonal direct sum for both $\omega$ and $\omega'$. On $W$, the form $\omega$ vanishes, while $\omega'$ is non-degenerate. The biquadratic space $W'$ is universal homogeneous. Thus $V$ has Type~Ib.

Now suppose $U=W$. This means that $W$ is an isotropic subspace for $\omega'$. Thus the total orthogonal complement $W'$ of $W$ contains $W$, and we must have $W'=W$ since we cannot have $W'=V$. Thus $V$ has Type~IIc.

\textit{(b) Every $\omega(p)$ has zero nullspace.} The two projection maps $\Delta \to V$ are both injective, and so their images are irreducible subrepresentations $W_1$ and $W_2$ of $V$. Moreover, there is a unique isomorphism $T \colon W_1 \to W_2$  such that $\omega_v=\omega'_{Tv}$ for $v \in W_1$. The space $\Delta$ is the graph of $T$. We cannot have $W_1=W_2$, for then $T$ would be multiplication by a scalar $\alpha$ and $\omega-\alpha \omega'$ would have nullspace $W_1$. Thus $V=W_1 \oplus W_2$. We therefore see that $V$ canonically carries the structure of an object of the category $\cD$ defined in \S \ref{s:III}. The equivalence with $\cE$ studied there shows that condition Definition~\ref{defn:III}(b) holds, and so $V$ is Type~III.
\end{proof}

\end{document}